\documentclass{amsart}

\usepackage{custom_dirk}
\usepackage{amsfonts}
\usepackage{url}
\usepackage{tabularx}
\usepackage{vmargin}
\usepackage{longtable}
\usepackage{tikz}

\usepackage[giveninits=true,maxnames = 10,backend=biber]{biblatex}
\AtEveryBibitem{
  \clearfield{issn} 
  \clearfield{doi} 
  \clearlist{language}
  \clearfield{isbn}

  \ifentrytype{online}{}{
    \ifentrytype{manual}{}{\clearfield{url}}
  }
}

\bibliography{references}

\usepackage{changepage}

\newcommand{\GW}{\text{GW}}

\usepackage[hidelinks]{hyperref}

\begin{document}

\title{Virasoro constraints for moduli spaces of sheaves on surfaces}

\author{Dirk van Bree}
\date{September 2021}

\subjclass[2020]{14J60; 14M25}

\begin{abstract}
  We introduce a conjecture on Virasoro constraints for the moduli space of stable sheaves on a smooth projective surface. These generalise the Virasoro constraints on the Hilbert scheme of a surface found by Moreira and Moreira, Oblomkov, Okounkov and Pandharipande. We verify the conjecture in many nontrivial cases by using a combinatorial description of equivariant sheaves found by Klyachko.
\end{abstract}

\maketitle

\section{Introduction}
\label{sec:introduction}

The Virasoro constraints are a conjecture in Gromov-Witten theory, proposed by Eguchi, Hori and Xiong \cite{EGUCHI199771} for any smooth projective complex variety $Y$ with only $(p,p)$-cohomology. S. Katz helped to establish a general conjecture. We review the conjecture below. The Virasoro constraints have been proven if $Y$ is a toric 3-fold, see \cite{Givental:2001aja}. Recently, Moreira, Oblomkov, Okounkov and Pandharipande \cite{MOOP_unpublished} used the GW-PT correspondence \cite{pandharipande12_descen_local_curves, pandharipande12:_descen, pandharipande13_descen_theor_stabl_pairs_toric, 10.2140/gt.2014.18.2747} to obtain constraints on the moduli space of stable pairs of a toric 3-fold. In \cite{MOOP_unpublished} and \cite{moreira20_viras_conjec_stabl_pairs_descen_unpublished}, the theory was applied to the case where the toric 3-fold is $X \times \mathbb{P}^1$, for $X$ a toric surface. In the second reference, a cobordism argument is used to prove constraints for the Hilbert scheme of points of $X$, for any simply connected surface $X$. In this paper, we conjecture Virasoro constraints extending this result to moduli spaces of sheaves of rank $r \geq 1$. In doing so, we address a question of R. Pandharipande, who asked in his Hangzhou lecture on Virasoro constraints (April 2020) whether the moduli space of stable sheaves admits such constraints.

\subsection{Virasoro constraints in GW-theory} We review the Virasoro conjecture for GW-theory (also see \cite[Sec. 4]{pandharipande_three_questions_gro_wit_thy}). Let $Y$ be a smooth projective complex variety. Assume for simplicity that $Y$ has only $(p,p)$-cohomology. For any nonnegative numbers $n$, $g$ and any $\beta \in H_2(Y, \mathbb{Z})$ we associate to $Y$ the moduli space of stable maps $\bar{M}_{g, n}(Y, \beta)$. Recall that a stable map is a morphism $f : C \to Y$ where $C$ is a connected genus $g$ curve with at most nodal singularities and $n$ marked smooth points $x_1, x_2, \ldots, x_n$. Furthermore, $f$ should satisfy $f_*[C] = \beta$. Then there are canonical maps $\ev_i : \bar{M}_{g,n}(Y, \beta) \to Y$ which sends a stable map $(C, f)$ to $f(x_i)$, the image of the $i$-th marked point. Also, $\bar{M}_{g,n}(Y, \beta)$ admits $n$ canonical line bundles $M_i$ for $1 \leq i \leq n$. At the point $(C, f)$, $M_i$ is the cotangent bundle to $C$ at $x_i$. Then we define the descendant GW-invariants as
\[
  \langle \tau_{k_1}(\gamma_1) \tau_{k_2}(\gamma_2) \ldots \tau_{k_n}(\gamma_n)\rangle^Y_{g,\beta} = \int_{[\bar{M}_{g,n}(Y, \beta)]^{\text{vir}}} c_1(M_1)^{k_1}\ev_1^*(\gamma_1) \cdots c_1(M_n)^{k_n} \ev_n^*(\gamma_n)
\]
Now, fix a basis $\{\gamma_i\}_{i = 1,\ldots,r}$ of $H^\bullet(Y, \mathbb{Q})$, which is homogeneous with respect to the degree decomposition. Let $\lambda$, $q$ and $\{t^a_k\}_{k = 0,1,\ldots}^{a = 1,\ldots,r}$ be formal variables. Then we define the GW-partition function as
\[
  F^Y = \sum_{g\geq 0} \lambda^{2g-2} \sum_{\beta \in H_2(Y, \mathbb{Z})} q^\beta \sum_{n \geq 0} \frac{1}{n!} \sum_{\substack{a_1,\ldots,a_n \\ k_1,\ldots,k_n}} t^{a_1}_{k_1}\cdots t^{a_n}_{k_n} \langle  \tau_{k_1}(\gamma_{a_1}) \ldots \tau_{k_n}(\gamma_{a_n}) \rangle^X_{g, \beta}
\]

Let $Z^Y = \exp(F^Y)$. The Virasoro conjecture states that $L^{\GW}_k(Z^Y) = 0$, where $L^{\GW}_k$ are certain formal differential operators defined for $k\geq -1$. We will not give the formulae for the $L_k^{\GW}$ here, see \cite[Sec. 4]{pandharipande_three_questions_gro_wit_thy} for those. The operators $L_k^{\GW}$ are called the \emph{Virasoro operators} and they satisfy the \emph{Virasoro bracket}
\begin{equation}\label{eq:virasoro-bracket}
  [L^{\GW}_k, L^{\GW}_n] = (n - k)L^{\GW}_{k+n}.
\end{equation}

The Virasoro conjecture expresses relations among integrals on $\bar{M}_{g, n}(X, \beta)$, for various $g$, $n$ and $\beta$. It is possible to write down these relations rather explicitely. Let $\mathbb{D}^Y_{\text{GW}}$ be the commutative $\mathbb{Q}$-algebra generated by formal symbols $\tau_k(\gamma_i)$. Then one can define certain operators $\mathcal{L}_k^\GW$ on this algebra for $k \geq -1$. These satisfy the Virasoro bracket. For each $g$ and $\beta$ we have a linear map $\langle - \rangle^Y_{g, \beta}:\mathbb{D}^Y_{\text{GW}} \to \mathbb{Q}$, which sends the monomial $\tau_{k_1}(\gamma_{a_1})\cdots \tau_{k_n}(\gamma_{a_n})$ to $\langle \tau_{k_1}(\gamma_{a_1}) \ldots \tau_{k_n}(\gamma_{a_n}) \rangle^Y_{g, \beta}$. The Virasoro constraints described above are equivalent to the vanishing
\[
  \langle \mathcal{L}^{\GW}_kD \rangle^Y_{g, \beta} = 0
\]
for any $D \in \mathbb{D}^Y_\GW$ and all $g$ and $\beta$. This formulation of the Virasoro constraints generalises to other contexts, as we will see now.

\subsection{Virasoro constraints for stable pairs}

Recall that a stable pair on a smooth projective threefold $Y$ is a map of coherent sheaves $s : \mathcal{O}_Y \to F$ such that $F$ is pure of dimension 1 (i.e., every nonzero subsheaf of $F$ has dimension 1) and $\dim \Supp \coker s = 0$. Associated to such a pair are two invariants, $n = \chi(Y, F)$ and $\beta \in H_2(Y, \mathbb{Z})$, the homology class associated to $\Supp F$. There is a fine projective moduli space $P = P_n(Y, \beta)$ parametrising stable pairs with these invariants. It carries a virtual fundamental class and its virtual dimension is $\int_{\beta}c_1(X)$. See \cite{pandharipande09_curve_count_via_stabl_pairs_deriv_categ} for more details.

The GW-PT correspondence describes a relationship between the GW- and PT-invariants of a smooth quasi-projective 3-fold. The so-called stationary variant has been proven in the toric case \cite{10.2140/gt.2014.18.2747}. In \cite{MOOP_unpublished}, this correspondence has been made more explicit and it is used to derive constraints for the moduli space of stable pairs on $Y$.

Define $\mathbb{D}^Y$ to be the $\mathbb{Q}$-algebra generated by formal symbols $\ch_i(\gamma)$, where $\gamma \in H^\bullet(Y, \mathbb{Q})$. We impose the relations $\ch_i(\gamma + \gamma') = \ch_i(\gamma) + \ch_i(\gamma')$ and $\ch_i(\lambda\gamma) = \lambda\ch_i(\gamma)$. Again it is possible to define operators $\mathcal{L}_k^{\text{PT}}$ on this algebra. These satisfy the Virasoro bracket in a slightly weaker sense. Fix a number $n$ and a class $\beta \in H_2(Y, \mathbb{Z})$. Let $P = P_n(Y, \beta)$ be the moduli space of stable pairs. Then there is an algebra homomorphism $\mathbb{D}^Y \to H^\bullet(P, \mathbb{Q})$ by sending $\ch_i(\gamma)$ to
\[
  \pi_{P, *}(\ch_i(\mathbb{F} - \mathcal{O}_{Y \times P}) \cdot \pi^*_Y(\gamma))
\]
Here $\pi_P$ and $\pi_Y$ are the projection from $Y \times P$ to $P$ and $Y$, respectively, and $\mathcal{O}_{Y \times P} \to \mathbb{F}$ is the universal stable pair on $Y \times P$. The Virasoro constraints obtained by \cite{MOOP_unpublished} can now be expressed as
\[
  \int_{[P_n(Y, \beta)]^{\text{vir}}} \mathcal{L}_k^{\text{PT}}D = 0
\]
for all $D \in \mathbb{D}^Y$ and all $k \geq -1$. (Here the map to the cohomology of $P$ is left implicit.) These constraints hace been proven for $Y$ toric, if $D$ is contained in a certain subalgebra of $\mathbb{D}^Y$ consisting of the \emph{stationary invariants} (see \cite[Thm. 4]{MOOP_unpublished}).

Note that in contrast to the Gromov-Witten case, here we have relations between integrals on a \emph{single} moduli space. We did not write out the formula for $\mathcal{L}_k^{\text{PT}}$, but it is very similar to our Definition \ref{def:virasoro-operator} and the reader is encouraged to compare our definition with Definition 2 in \cite{MOOP_unpublished}.

\subsection{Virasoro constraints for the Hilbert scheme}

In \cite[Sec. 6]{MOOP_unpublished} they take $Y = X \times \mathbb{P}^1$, for $X$ a toric surface. If one takes any $x \in X$, then the moduli space of pairs $P_n(X \times \mathbb{P}^1, n[\{x\} \times \mathbb{P}^1])$ is canonically isomorphic to $\Hilb^n(X)$. The Virasoro constraints of pairs induce constraints on $\Hilb^n(X)$. We have the algebra $\mathbb{D}^X$, which is defined in the same way as before. We have an operator $\mathcal{L}_k$, which is the same as the one from Definition \ref{def:virasoro-operator}. We also have a ring homomorphism $\mathbb{D}^X \to H^\bullet(\Hilb^n(X), \mathbb{Q})$, which is defined by sending $\ch_i(\gamma)$ to
\[
  \pi_{\Hilb^n(X), *}(-\ch_k(\mathcal{I}) \cdot \pi_X^*(\gamma))
\]
where $\pi_{\Hilb_n(X)}$ and $\pi_X$ are the projections from $X \times \Hilb^n(X)$ to $\Hilb^n(X)$ and $X$ respectively, and $\mathcal{I}$ is the universal ideal sheaf. Then the Virasoro constraints are
\[
  \int_{\Hilb^n(X)} \mathcal{L}_kD = 0
\]
for all $D \in \mathbb{D}^X$ and all $k \geq -1$. This was proven for all smooth projective toric surfaces $X$ in \cite[Sec. 6]{MOOP_unpublished}. In \cite{moreira20_viras_conjec_stabl_pairs_descen_unpublished}, a similar formula is proven for all simply-connected smooth projective surfaces $X$, using a cobordism argument. This required a modification of the operator $\mathcal{L}_k$, because the formula of Definition \ref{def:virasoro-operator} is only correct if $X$ has only $(p,p)$-cohomology.

\subsection{Formulation of the conjecture}

We will now formulate the main conjecture of this paper. Let $X$ be a surface\footnote{In this paper, this will mean a smooth projective complex variety of dimension two.} which only has $(p,p)$-cohomology and fixed polarisation $H$. Then $H^\bullet(X, \mathbb{Q})= \bigoplus_{i = 0}^2 H^{2i}(X, \mathbb{Q})$ and we refer to the elements of $H^{2i}(X, \mathbb{Q})$ as the cohomology classes of (complex) degree $i$. We fix in advance integers $r > 0$ and $c_2$ and a line bundle $\Delta$ on $X$ and we let $M = M^H_X(r, \Delta, c_2)$ be the moduli space of Gieseker semi-stable sheaves (with respect to $H$) with rank $r$, determinant $\Delta$ and second Chern class $c_2$.

\begin{definition}
  We define $\mathbb{D}^X$ as the commutative $\mathbb{Q}$-algebra generated by symbols of the form $\ch_i(\gamma)$, where $i$ is a nonnegative integer and $\gamma \in H^\bullet(X, \mathbb{Q})$. We impose the relations $\ch_i(\gamma_1 + \gamma_2) = \ch_i(\gamma_1) + \ch_i(\gamma_2)$ and $\ch_i(\lambda \cdot \gamma) = \lambda \cdot \ch_i(\gamma)$. We define a grading on $\mathbb{D}^X$ by declaring that the degree of $\ch_i(\gamma)$ is $i + \deg \gamma - 2$. 
\end{definition}

Later we will interpret the $\ch_i(\gamma)$'s as elements of the cohomology on $M$. The degree of $\ch_i(\gamma)$ is chosen so that it matches the degree of the cohomology class we will associate to it. We introduce $\mathbb{D}^X$ because the operator $\mathcal{L}_k$ defined below does not descend to the level of cohomology. The next definition extends the one given in \cite{MOOP_unpublished} and \cite{moreira20_viras_conjec_stabl_pairs_descen_unpublished}\footnote{Up to a typo in the second reference.}.

\begin{definition}\label{def:virasoro-operator}
  For each $k \geq -1$, we define an operator $\mathcal{L}_k$ on $\mathbb{D}^X$ as $R_k + T_k + S_k$ where the latter three operators are:
  \begin{itemize}
  \item  $R_k$ is defined by $R_k\ch_i(\gamma) = \prod_{j = 0}^k (i + j + d - 2) \ch_{i + k}(\gamma)$ for $\gamma \in H^\bullet(X, \mathbb{Q})$ of degree $d$. We then define it on all of $\mathbb{D}^X$ by requiring it to be a derivation. In particular, $R_{-1}\ch_i(\gamma) = \ch_{i - 1}(\gamma)$, where we agree that $\ch_{-1}(\gamma) = 0$.
  \item
    $T_k$ is multiplication by a fixed element of $\mathbb{D}^X$, namely
    \begin{align*}
      T_k = & - \sum_{a + b = k + 2} (-1)^{(d^L + 1)(d^R + 1)}(a + d^L - 2)!(b + d^R - 2)!\ch_a\ch_b(1)\\
      &+ \sum_{a + b = k} a!b!\ch_a\ch_b\left(\frac{c_1(X)^2 + c_2(X)}{12}\right)
    \end{align*}
    Here we are using
    \[(-1)^{(d^L + 1)(d^R + 1)}(a + d^L - 2)!(b + d^R - 2)!\ch_a\ch_b(1)\]
    as an abbreviation for
    \[\sum_i (-1)^{(\deg(\gamma_i^L) + 1)(\deg(\gamma_i^R) + 1)}(a + \deg(\gamma_i^L) - 2)!(b + \deg(\gamma_i^R) - 2)!\ch_a(\gamma_i^L)\ch_b(\gamma_i^R)\]
    where $\sum_i \gamma_i^L \otimes \gamma_i^R$ is the Künneth decomposition of $\Delta_*1 \in H^4(X \times X,\mathbb{Q})$. In the second sum $\ch_a\ch_b\left(\frac{c_1(X)^2 + c_2(X)}{12}\right)$ is a similar abbreviation. Note that on $X$ we have an equality $\frac{c_1(X)^2 + c_2(X)}{12} = \chi(X, \mathcal{O}_X) \cdot \mathbf{p}$ by Hirzebruch-Riemann-Roch, where $\mathbf{p}$ is the point class. Finally, we adopt the convention that factorials of negative numbers are zero.
  \item
    $S_k$ is defined by setting $S_kD = \frac{(k+1)!}{r} R_{-1}(\ch_{k+1}(\mathbf{p})D)$. Here $\mathbf{p} \in H^4(X, \mathbb{Q})$ again corresponds to the class of a point.
  \end{itemize}
\end{definition}

Note that the definition of $S_k$ depends on the rank $r$, but the definition of $R_k$ and $T_k$ does not. A first thing to notice is that $\mathcal{L}_k$ is of degree $k$. This is easily verified, as this is true for $R_k$, $T_k$ and $S_k$ separately. Next one might wonder whether some part of the operator satisfies the Virasoro bracket. This is almost true. In section \ref{sec:notat-first-evid} where we prove that $R_k + T_k$ satisfies the Virasoro bracket after a natural modification of the above definition.

Next we explain how to interpret these as cohomology classes on $M = M^H_X(r, \Delta, c_2)$. Assume that $M$ is fine, i.e., that there is a universal sheaf $\mathcal{E}$ on $X \times M$. Now consider the following cohomology classes on $X \times M$:
\begin{equation}
 \label{eq:kernel-classes} 
  \ch_i\left(- \mathcal{E} \otimes \left(\det{\mathcal{E}}\right)^{-1/r}\right).
\end{equation}

Of course $\left(\det \mathcal{E}\right)^{-1/r}$ might not exist as a line bundle. Then the above cohomology class is still defined as the degree $i$ part of $-\ch(\mathcal{E}) \cdot \ch(\det(\mathcal{E}))^{-1/r}$, a formal power series in the cohomology ring. In general the universal sheaf is not unique, it is determined up to tensoring with a line bundle pulled back from $M$. However, this does not change the above classes, so these are canonically associated to $M$. Lastly, the existence of a universal family is not needed to construct the above class, as we will explain in Section \ref{sec:notat-first-evid}.

This construction allows us to interpret the $\ch_i(\gamma)$ as cohomology classes on $M$ by means of a slant product. Consider the projections $\pi_X : X \times M \to X$ and $\pi_M : X \times M \to M$.

\begin{definition}
  We define the \emph{geometric realisation} of a formal symbol $\ch_i(\gamma)$ as
  \[
    \ch_i(\gamma) = \pi_{M, *}\left( \pi_X^*\gamma \cdot \ch_i(-\mathcal{E} \otimes \det(\mathcal{E})^{-1/r})\right)
  \]
  This gives an algebra homomorphism $\mathbb{D}^X \to H^\bullet(M, \mathbb{Q})$. 
  
\end{definition}

We use the same notation for the elements of $\mathbb{D}^X$ and their geometric realisations. This will not cause confusion as long as one remembers that the operators $\mathcal{L}_k$ operate only on the formal algebra $\mathbb{D}^X$ and not on the cohomology of $M$. Also note that the geometric realisation is degree-preserving. 

Now we formulate the conjecture in a fairly general setting. Recall that a virtual fundamental class for $M$ was constructed by T. Mochizuki \cite{mochizuki09_donal} for the moduli space of stable sheaves. 

\begin{conjecture}
\label{sec:strong-conjecture}
Let $X$ be a surface with only $(p,p)$-cohomology and fixed polarisation $H$. Choose numbers $r > 0$ and $c_2$ and a line bunde $\Delta$. Let $M = M^H_X(r,\Delta, c_2)$ be the moduli space of Gieseker semi-stable sheaves of rank $r$, with determinant $\Delta$ and second Chern class $c_2$. Assume that all semi-stable sheaves with these invariants are stable. Then for all integers $k \geq -1$ and all $D \in \mathbb{D}^X$ we have
  \[
    \int_{[M]^{\text{vir}}} \mathcal{L}_k D = 0
  \]
\end{conjecture}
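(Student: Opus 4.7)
The plan is to verify the conjecture in the toric setting, where one has enough structure to reduce everything to a combinatorial identity. So I would assume that $X$ is a smooth projective toric surface with $(p,p)$-cohomology only, and let $T = (\mathbb{C}^*)^2$ act on $X$. This action lifts to the moduli space $M = M^H_X(r, \Delta, c_2)$, and the universal sheaf can be chosen $T$-equivariantly (after a possible central extension of the torus). The idea is to use virtual $T$-equivariant Atiyah--Bott localisation on $[M]^{\text{vir}}$ and then evaluate the resulting sum over fixed points combinatorially.

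First, I would upgrade every ingredient to equivariant cohomology: the classes $\ch_i(-\mathcal{E} \otimes \det(\mathcal{E})^{-1/r})$ and hence the geometric realisations of $\ch_i(\gamma)$ have canonical $T$-equivariant lifts, the operator $\mathcal{L}_k$ is purely formal and therefore commutes with this lift, and Mochizuki's virtual fundamental class admits an equivariant version. Virtual localisation then rewrites $\int_{[M]^{\text{vir}}} \mathcal{L}_k D$ as a sum over connected components $F \subset M^T$ of contributions of the form $\int_{[F]^{\text{vir}}} \frac{\iota_F^* \mathcal{L}_k D}{e(N^{\text{vir}}_F)}$. Since the Virasoro expression is of complex degree $k$ and the total integral lives in degree zero after inserting $D$, the non-equivariant limit is a $T$-weight computation on each fixed component.

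Second, I would use Klyachko's classification: a $T$-equivariant torsion-free sheaf $E$ on $X$ is determined by a vector space $E_{\text{gen}}$ of dimension $r$ together with, for each $T$-invariant divisor $D_\rho$, a decreasing filtration $E^\rho(i) \subset E_{\text{gen}}$ satisfying compatibility at the $T$-fixed points. For stability with fixed determinant the connected components of $M^T$ are either isolated points or, at worst, products of flag-type varieties, and in the standard (polarisation-generic) setting the fixed locus is a finite collection of combinatorial data — tuples of ``box configurations'' that generalise the partitions appearing for the Hilbert scheme. The tangent--obstruction complex at such a fixed point is given by an explicit \v{C}ech-style formula of Klyachko in terms of these filtrations, so both $\iota_F^*\ch_i(-\mathcal{E} \otimes \det(\mathcal{E})^{-1/r})$ and $e(N^{\text{vir}}_F)$ become explicit rational functions in the equivariant parameters $s_1, s_2$ indexed by the combinatorial data.

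Third — and this is where the real work lies — I would translate $\int_{[M]^{\text{vir}}} \mathcal{L}_k D = 0$ into a combinatorial identity among these rational functions and check it. The strategy for checking it is to proceed in increasing levels of generality. For $r = 1$ one recovers the Hilbert scheme case already settled in \cite{moreira20_viras_conjec_stabl_pairs_descen_unpublished}; the $T_k$ and $S_k$ contributions separately match after applying Hirzebruch--Riemann--Roch on $X$, and the novelty for $r > 1$ is precisely the $\frac{1}{r}$ in $S_k$, which arises naturally from differentiating the normalisation $\det(\mathcal{E})^{-1/r}$. For fixed $r$ and small $c_2$ the fixed locus is finite and computer-checkable, giving the ``many nontrivial cases'' promised in the abstract. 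The main obstacle is proving the identity in general: one needs to show that the sum over all Klyachko data of a certain rational function vanishes for every $k \geq -1$ and every monomial $D$, and I expect this to require either a clever reorganisation of the sum (mirroring the pair-of-pants/commutator argument used in \cite{MOOP_unpublished}) or a reduction to already-known Virasoro constraints on a related PT/GW moduli space via a dimensional reduction $Y = X \times \mathbb{P}^1$. My expectation is that the present paper carries out the finite combinatorial verification rigorously and leaves the uniform identity — the true combinatorial content — open.
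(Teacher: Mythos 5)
The statement you were asked to prove is stated in the paper only as a conjecture, and the paper does not prove it in general either: its evidence consists of the direct verifications for $k=-1,0$ (Prop.~\ref{prp:k-0--1}) and for $k>\vdim M$ (Cor.~\ref{sec:large-k}), explicit toric computations, and a deformation-invariance argument extending the Hirzebruch cases. Your proposed verification strategy --- equivariant lifts of the descendent classes, torus localisation over the Klyachko-type combinatorial fixed-point data, finite computer checks, with the uniform combinatorial identity left open --- is essentially the paper's approach; the only notable difference is that the paper needs no virtual localisation, since in all its examples $\Ext^2(E,E)=0$ (Lemma~\ref{lmm:smoothness}) makes $M$ smooth of the expected dimension, so ordinary Atiyah--Bott localisation together with K-theoretic localisation for the tangent weights suffices.
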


In most of the evidence presented in this paper, $M$ is smooth of the expected dimension. (However, we have some general statements in virtual setting in Section \ref{sec:notat-first-evid}.) Hence the virtual integral simplifies to an ordinary integral in those cases. The case where $r = 1$, $\Delta = \mathcal{O}_X$ (so $M$ is a Hilbert scheme of points) has already been proven \cite[Thm. 5]{moreira20_viras_conjec_stabl_pairs_descen_unpublished}, under the additional assumption that $X$ is simply-connected.

We will provide plenty additional evidence for the conjecure. For $k = -1$ and $k = 0$ one can verify the conjecture directly, see Prop. \ref{prp:k-0--1}. The fact that $\mathcal{L}_k$ is of degree $k$ will imply that the conjecture holds for $k > \vdim M$. The remaining evidence is a collection of explicit calculations of certain moduli spaces of sheaves on toric surfaces. 

We explain what evidence we have. In the calculations we assume that $\gcd(r, \Delta.H) = 1$. This implies that the moduli space is fine and that Gieseker stability coincides with $\mu$-stability. Recall also that by the Bogomolov inequality, for any fixed $X$, $r$ and $\Delta$, there is a minimal $c_2$ such that $M$ is nonempty (see \cite[Thm. 3.4.1]{huybrechts10}). In all the cases in this paper, this minimal $c_2$ coincides with the smallest number $c_2$ such that the Bogomolov inequality is satisfied. From the proof one can infer that for this minimal $c_2$, $M$ consists only of vector bundles. With this in mind, we have verified the conjecture in the following cases:
\begin{itemize}
\item $X = \mathbb{P}^2$, $c_1 = 1$, for $r = 2$, $3$ and $4$ with the minimal $c_2$ (which is respectively $1$, $2$ and $3$). Note that the choice of polarisation on $X$ is irrelevant.\footnote{In the case $r = 4$ there is a technical difficulty, see section \ref{sec:results-toric-vari}.}
\item $X = \mathbb{P}^2$, $r = 2$, $c_1 = 1$ and $c_2 = 2,3$. In these calculations, non-locally-free sheaves appear.
\item $X = \mathbb{F}_a$, the Hirzebruch surface, with $r = 2$, for any polarisation $H$ such that $H.\Delta$ is odd, and $c_2$ minimal.
\item $X = \mathbb{F}_0 = \mathbb{P}^1 \times \mathbb{P}^1$, with  $\Delta = \{*\} \times \mathbb{P}^1$ and $c_2 = 2$. Here the minimal $c_2$ is 1. Here we have taken $H$ to be an arbitrary polarisation such that $H.\Delta$ is odd.
\end{itemize}

In these cases, the dimension of the moduli space ranges from $0$ to $8$. We verify the conjecture by verifying it for monomials in the $\ch_i(\gamma)$'s. Thus the number of independent checks is equal to the dimension the vector space of such monomials. This dimension varies greatly from case to case, the largest dimension we encountered being 993.

\begin{example}
  The innocent-looking $\mathcal{L}_{\dim M}1$ is already non-trivial. Note that $R_{\dim M}1 = 0$ in all cases. Consider the case $X = \mathbb{P}^2$ and $r = 4$ mentioned above. In this case $\dim M = 6$. Then
  \[
    \int_MT_{6}1 = - \frac{49511}{4096} \qquad \text{and} \qquad \int_MS_{6}1 = \frac{49511}{4096}.
  \]
  Hence the conjecture holds in this case.
\end{example}

\begin{example}
  We construct a more complicated example. Consider again $X = \mathbb{P}^2$ and $r = 4$, just as before. Let $k = 2$ and $D = \ch_2(\mathbf{p})\cdot \ch_3(1)^2$. Then
  \[
    \int_MR_2D = -\frac{29715}{16348}\quad,\quad \int_MT_2D = \frac{18825}{32768} \quad \text{and} \quad \int_MS_2D = \frac{40605}{32768}.
  \]
  Again, these sum up to zero, as required.
\end{example}

See Appendix \ref{sec:data-obtained-from} for more explicit numbers obtained from the calculations. In total we did 1677 independent checks. The number of checks grows quickly with the dimension of the moduli space, as the 993 independent checks mentioned above came from the case $X = \mathbb{P}^2$, $r = 2$, $c_1 = 1$ and $c_2 =  3$, where the moduli space had dimension 8.

The strategy to verify the conjecture in these explicit cases comes from toric geometry. Since $X$ admits a toric action, so does the moduli space $M$. The fixed-point locus admits an explicit combinatorial description, due to Klyachko \cite{Klyachko_Equiv_bundle_toral_var}, Perling \cite{Perling_2004} and Kool \cite{KOOL20111700}. We review this description in Section \ref{sec:invariant-sheaves}. Then we apply Atiyah-Bott localisation to evaluate the integral. In the cases we consider, the fixed-point locus is always isolated, but the results are still interesting and nontrivial.

\subsection{Possible variations of the conjecture}

In Conjecture \ref{sec:strong-conjecture} we required that $X$ only has $(p,p)$-cohomology. In the Hilbert scheme case, Moreira \cite{moreira20_viras_conjec_stabl_pairs_descen_unpublished} has been able to remove this assumption at the cost of the operators $R_k$, $T_k$ and $S_k$ becoming more complicated. We expect that a similar modification can be made in the surface case, but it is unclear to the author if exactly the same modification works.

We have also assumed that all semi-stable sheaves with our invariants are stable. This assumption is needed to have a virtual fundamental class on $M$, and is thus indispensible. It is worth investigating if there is a version of Conjecture \ref{sec:strong-conjecture} on a different space, such as the space of Bradlow pairs or Joyce-Song pairs \cite{mochizuki09_donal} \cite{Joyce_vertex_algs}.

\subsection{Acknowledgements.} The author would like to thank his supervisor Martijn Kool for many useful suggestions and learning from him many of the techniques employed in this paper. He is also grateful to Rahul Pandharipande for raising the problem addressed in this paper in his Hangzhou lecture as well as provding some references. Finally, he would like to thank Sergej Monavari, Woonam Lim, Longting Wu and Carel Faber for insightful comments. The author is supported by NWO grant VI.Vidi.192.012.

\section{First remarks}
\label{sec:notat-first-evid}

\subsection{Eliminating fineness} The cohomology classes of~(\ref{eq:kernel-classes}) were constructed using a universal family of semi-stable sheaves on $X \times M$. This universal family is not needed, since we have always access to a \emph{twisted} universal family $\mathcal{E}$ \cite{căldăraru2000derived}. Denote its Brauer class by $\alpha$. Then $\mathcal{E}^{\otimes r}$ and $\det\mathcal{E}$ both have Brauer class $r\alpha$, where $r$ is the rank of $\mathcal{E}$. In particular $\mathcal{E}^{\otimes r} \otimes \left( \det{\mathcal{E}} \right)^{-1}$ has Brauer class 0, i.e., it is an ordinary sheaf. Therefore, we might compute the cohomology classes above by taking the Chern classes of this sheaf and then taking the $r$-th root on the level of cohomology. Finally, we note that this is independent of the twisted family chosen. Indeed, if $\mathcal{E}' = \mathcal{E} \otimes L$ is another family, for $L$ a line bundle, then $\mathcal{E}'^{\otimes r} = L^r \otimes \mathcal{E}^ {\otimes r}$ and $\det\mathcal{E}' = L^r \det\mathcal{E}$, hence $\mathcal{E}'^{\otimes r} \otimes \det \mathcal{E}' \cong \mathcal{E}^{\otimes r} \otimes \det \mathcal{E}$.

\subsection{The conjecture for small \texorpdfstring{$k$}{k}}

For small $k$, it is actually possible to verify the conjecture by providing identities for $\ch_0(\gamma)$ and $\ch_1(\gamma)$. These equations are similar to Proposition 1 of Moreira \cite{moreira20_viras_conjec_stabl_pairs_descen_unpublished}. 

\begin{lemma}
  \label{lmm:ch0-ch1-identities}
  For any smooth surface $X$, $r > 0$ and Chern classes $c$, we have the following identities in the cohomology of $M$:
  \begin{enumerate}
  \item $\ch_0(\gamma) = -r\cdot\int_X\gamma \in H^ 0(M, \mathbb{Q})$.
  \item $\ch_1(\gamma) = 0.$
  \end{enumerate}
\end{lemma}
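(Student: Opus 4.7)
The plan is to unpack the definition of the geometric realisation and exploit the fact that the twisted universal class $-\mathcal{E}\otimes\det(\mathcal{E})^{-1/r}$ is designed so that its total first Chern class vanishes. Writing $\mathcal{F} = -\mathcal{E}\otimes\det(\mathcal{E})^{-1/r}$ as a formal element of the cohomology ring of $X\times M$, the Chern character expands as
\[
\ch(\mathcal{F}) \;=\; -\ch(\mathcal{E})\cdot\exp\!\bigl(-c_1(\mathcal{E})/r\bigr),
\]
since $c_1(\det\mathcal{E})=c_1(\mathcal{E})$. Reading off the degree-zero and degree-one parts gives $\ch_0(\mathcal{F})=-r$ and
\[
\ch_1(\mathcal{F})\;=\;-c_1(\mathcal{E})+r\cdot\tfrac{1}{r}c_1(\mathcal{E})\;=\;0.
\]
This is the only nontrivial computation; everything else is formal. (In the absence of a genuine universal family, one passes to the twisted universal sheaf and extracts an $r$-th root in cohomology as explained in the previous subsection; the resulting class is unchanged.)

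For part (1), the projection formula gives
\[
\ch_0(\gamma)\;=\;\pi_{M,*}\bigl(\pi_X^*\gamma\cdot\ch_0(\mathcal{F})\bigr)\;=\;-r\cdot\pi_{M,*}\pi_X^*\gamma.
\]
By the Künneth decomposition, $\pi_{M,*}\pi_X^*\gamma$ vanishes unless $\gamma\in H^4(X,\mathbb{Q})$, in which case it equals $(\int_X\gamma)\cdot 1_M$. Either way, this is exactly $-r\int_X\gamma\in H^0(M,\mathbb{Q})$, which is the content of (1). For part (2), the same projection formula yields
\[
\ch_1(\gamma)\;=\;\pi_{M,*}\bigl(\pi_X^*\gamma\cdot\ch_1(\mathcal{F})\bigr)\;=\;0
\]
since $\ch_1(\mathcal{F})=0$ identically on $X\times M$.

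There is essentially no obstacle: the lemma is a direct consequence of the way the twist $\det(\mathcal{E})^{-1/r}$ was chosen. The only care needed is the standard one, namely to justify that the formal $r$-th root and the associated Chern character operations are well-defined on cohomology (which was already addressed in the preceding subsection) and to make peace with the convention $\int_X\gamma=0$ when $\gamma$ is not of top degree so that the formula $\ch_0(\gamma)=-r\int_X\gamma$ is to be read componentwise with respect to the degree decomposition.
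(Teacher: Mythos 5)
Your proposal is correct and follows essentially the same route as the paper: expand $\ch(-\mathcal{E}\otimes\det(\mathcal{E})^{-1/r})$ in low degrees (the paper works modulo $H^{\geq 4}(X\times M,\mathbb{Q})$, you read off the degree $0$ and $1$ parts directly, which is the same bookkeeping), obtain $-r$ and $0$, and conclude by the push-pull/projection formula. Your closing remarks on the twisted universal sheaf and on reading $\int_X\gamma$ degreewise match the paper's conventions, so no gap remains.
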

\begin{proof} Let $I = H^{\geq 4}(X \times M, \mathbb{Q})$. We can write $\ch(\mathcal{E})$ as $r + c_1(\mathcal{E}) \mod I$. Similarly, we can write $\ch(\det(\mathcal{E})^ {-1/r})$ as $e^ {-c_1(\mathcal{E})/r} = 1 - c_1(\mathcal{E})/r \mod I$. Their product is $r \mod I$. We obtain that $\ch_k(-\mathcal{E} \otimes \det\mathcal{E}^{-1/r})$ is $-r$ for $k = 0$ and $0$ for $k = 1$. This second identity implies $\ch_1(\gamma) = 0$. If we use the push-pull formula, then the first identity implies
  \[
    \ch_0(\gamma) = \pi_{M, *}(\pi_X^ *\gamma \cdot -r) = -r \pi_{M, *}\pi_X^*\gamma = -r \cdot\int_X\gamma \in H^0(M,\mathbb{Q}).
  \]
\end{proof}

One minor annoyance is that the algebra $\mathbb{D}^X$ contains elements of negative degree, e.g. $\ch_0(1)$ has degree $-2$. We will deal with these elements in the next lemma, telling us that we can essentially ignore these.

\begin{lemma}
  \label{lmm:low-term-lemma}
  Let $\gamma \in H^\bullet(X, \mathbb{Q})$ be an element of pure degree. If $\deg \ch_i(\gamma) \leq 0$, then for all $D \in \mathbb{D}^X$ we have $\mathcal{L}_k(\ch_i(\gamma)D) = \ch_i(\gamma)\mathcal{L}_kD$ in $H^\bullet(M, \mathbb{Q})$.
\end{lemma}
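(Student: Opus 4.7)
The plan is to decompose $\mathcal{L}_k = R_k + T_k + S_k$ and isolate the obstruction to $\mathcal{L}_k$ commuting with multiplication by $\ch_i(\gamma)$. Since $T_k$ is just multiplication by a fixed element, it commutes automatically. Because $R_k$ is a derivation, we get
\[
  R_k(\ch_i(\gamma)D) - \ch_i(\gamma)R_kD = R_k(\ch_i(\gamma))\cdot D.
\]
For $S_k$, I would write $S_k(\ch_i(\gamma)D) = \frac{(k+1)!}{r} R_{-1}(\ch_{k+1}(\mathbf{p})\ch_i(\gamma)D)$ and use that $R_{-1}$ is a derivation, yielding
\[
  S_k(\ch_i(\gamma)D) - \ch_i(\gamma)S_kD = \frac{(k+1)!}{r}\ch_{k+1}(\mathbf{p})\cdot R_{-1}(\ch_i(\gamma))\cdot D.
\]
Thus the lemma reduces to showing that both $R_k(\ch_i(\gamma))$ and $\ch_{k+1}(\mathbf{p})\cdot R_{-1}(\ch_i(\gamma))$ map to zero in $H^\bullet(M, \mathbb{Q})$ whenever $\deg\ch_i(\gamma)\leq 0$.

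Next I would enumerate the (finitely many) pairs $(i, d)$, where $d = \deg\gamma$, for which $i + d - 2 \leq 0$; these are $(0,0), (0,1), (0,2), (1,0), (1,1), (2,0)$. In each case the formula $R_k\ch_i(\gamma) = \prod_{j=0}^k(i + j + d - 2)\ch_{i+k}(\gamma)$ either contains a vanishing factor (when the range $\{i+d-2, i+d-1, \ldots, i+d+k-2\}$ crosses zero) and so is zero already in $\mathbb{D}^X$, or else reduces to a multiple of $\ch_0(\gamma)$ or $\ch_1(\gamma)$. Here I would invoke Lemma \ref{lmm:ch0-ch1-identities}: $\ch_1(\gamma)=0$ in $H^\bullet(M)$, and $\ch_0(\gamma) = -r\int_X \gamma$, which vanishes unless $\gamma$ has top degree, but in the surviving cases $\gamma$ has degree $< 2$. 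A parallel (and shorter) case analysis handles $R_{-1}\ch_i(\gamma) = \ch_{i-1}(\gamma)$: for $i=0$ this is zero by convention, for $i=1$ it is $\ch_0(\gamma)$ with $\gamma$ of degree $\leq 1$, and for $i=2$ (forcing $d=0$) it is $\ch_1(1)$.

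This case-by-case verification is entirely mechanical, and there is no real obstacle beyond bookkeeping. The only thing worth flagging is that the claim holds in $H^\bullet(M,\mathbb{Q})$ rather than in $\mathbb{D}^X$; the argument essentially quantifies exactly which elements of the formal algebra are sent to zero by the geometric realisation, and checks that $R_k$ and $R_{-1}$ send low-degree generators into that kernel.
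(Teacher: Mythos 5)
Your proposal is correct and follows essentially the same route as the paper: split $\mathcal{L}_k = R_k + T_k + S_k$, use that $T_k$ is multiplication and that $R_k$, $R_{-1}$ are derivations, and then kill the obstruction terms $R_k(\ch_i(\gamma))$ and $\ch_{k+1}(\mathbf{p})R_{-1}(\ch_i(\gamma))$ in $H^\bullet(M,\mathbb{Q})$. The only (harmless) difference is that where the paper dispatches the surviving terms by noting they have negative degree and hence vanish under the degree-preserving geometric realisation, you enumerate the cases explicitly and invoke Lemma~\ref{lmm:ch0-ch1-identities}, which amounts to the same vanishing.
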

\begin{proof}
  We verify this for $R_k$, $T_k$ and $S_k$ separately. For $T_k$ is it immediate. For $R_k$ we note that $R_k(\ch_i(\gamma)D) = R_k(\ch_i(\gamma))D + \ch_i(\gamma)R_kD$. But note that $R_k(\ch_i(\gamma)) = 0$ because either this has negative degree, or there is a zero in the product in the definition of $R_k$.\\
  For $S_k$, we use again that $R_{-1}$ is a derivation to see that
  \[
    S_k(\ch_i(D)) = \frac{(k+1)!}{r}\left(\ch_{k+1}(\mathbf{p})DR_{-1}\ch_i(\gamma) + \ch_{i}(\gamma)R_{-1}\left(\ch_{k+1}(\mathbf{p})D\right)\right)
  \]
  But $R_{-1}(\ch_i(\gamma))$ has strictly negative degree, so it vanishes in the cohomology of $M$.
\end{proof}

\begin{corollary}
  \label{sec:large-k}
  Conjecture  \ref{sec:strong-conjecture} holds when $k > \vdim M$.
\end{corollary}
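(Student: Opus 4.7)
The plan is to combine a degree count with the two preceding lemmas. By linearity I would first reduce to the case that $D$ is a monomial in generators $\ch_i(\gamma)$ with $\gamma$ homogeneous. Since the geometric realisation is degree-preserving and $\mathcal{L}_k$ is of degree $k$, the class $\mathcal{L}_k D$ sits in complex degree $k + \deg D$, and the virtual integral over $[M]^{\text{vir}}$ can only be nonzero when $k + \deg D = \vdim M$. The hypothesis $k > \vdim M$ therefore forces $\deg D < 0$, and we may assume this throughout.

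A monomial of strictly negative total degree must contain at least one factor $\ch_i(\gamma)$ with $\deg \ch_i(\gamma) < 0$, since degrees are additive and a sum of nonnegative numbers is nonnegative. The list of negative-degree generators is short: the condition $\deg \ch_i(\gamma) = i + \deg \gamma - 2 < 0$ only leaves the cases $\ch_0(\gamma)$ with $\deg \gamma \in \{0,1\}$ and $\ch_1(\gamma)$ with $\deg \gamma = 0$. By Lemma \ref{lmm:ch0-ch1-identities}, all three of these vanish in $H^\bullet(M, \mathbb{Q})$: the second identity kills $\ch_1(\gamma)$ outright, while the first gives $\ch_0(\gamma) = -r \int_X \gamma$, which vanishes whenever $\gamma$ has complex degree strictly less than two.

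To finish I would apply Lemma \ref{lmm:low-term-lemma} to commute such a vanishing factor past $\mathcal{L}_k$. Writing $D = \ch_i(\gamma) D'$ with $\deg \ch_i(\gamma) < 0$, the lemma gives $\mathcal{L}_k D = \ch_i(\gamma) \mathcal{L}_k D' = 0$ in $H^\bullet(M, \mathbb{Q})$, and hence the virtual integral vanishes. I do not foresee any real obstacle; the argument is essentially just bookkeeping. The only point worth flagging is that the existence of negative-degree elements in $\mathbb{D}^X$ is precisely what made Lemma \ref{lmm:low-term-lemma} necessary in the first place, so the corollary is the immediate payoff of having both preliminary lemmas in hand.
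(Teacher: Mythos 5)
Your proposal is correct and follows essentially the same route as the paper: reduce to homogeneous monomials, observe that $k > \vdim M$ together with the degree count kills everything unless $\deg D < 0$, and then commute a negative-degree factor past $\mathcal{L}_k$ using Lemma \ref{lmm:low-term-lemma} and its vanishing in $H^\bullet(M,\mathbb{Q})$. The only cosmetic difference is that you justify the vanishing of the negative-degree generators via Lemma \ref{lmm:ch0-ch1-identities}, while the paper appeals directly to the fact that their geometric realisations would live in negative cohomological degree; both are fine.
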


\begin{proof}
  It suffices to check the conjectures when $D = \prod_j \ch_{i_j}(\gamma_j)$ where the $\gamma_j$ are of pure degree. Then the degree of $D$ is $\sum_j \deg \ch_{i_j}(\gamma_j)$. If $\deg D \geq 0$ then $\deg \mathcal{L}_kD > \dim M$, so the integral is zero. If $\deg D < 0$ assume $\deg \ch_{i_0}(\gamma_0) < 0$. By Lemma \ref{lmm:low-term-lemma} we find that $\mathcal{L}_kD$ is a multiple of $\ch_{i_0}(\gamma_0) = 0$, so it is zero as well.
\end{proof}

\begin{corollary}
  \label{cor:eliminiating-ch-1}
  We have that $\mathcal{L}_k(\ch_1(\mathbf{p})D) = 0$, for any $D \in \mathbb{D}^X$.
\end{corollary}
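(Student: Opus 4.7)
The plan is to compute $\mathcal{L}_k(\ch_1(\mathbf{p})D) = R_k(\ch_1(\mathbf{p})D) + T_k(\ch_1(\mathbf{p})D) + S_k(\ch_1(\mathbf{p})D)$ piece by piece in $H^\bullet(M,\mathbb{Q})$, and check that after applying Lemma \ref{lmm:ch0-ch1-identities} (which tells us $\ch_1(\gamma)=0$ and $\ch_0(\mathbf{p}) = -r$ in cohomology) the remaining contributions cancel exactly. Note that $\mathbf{p}$ has degree $2$, so $\ch_1(\mathbf{p})$ has degree $1$ and Lemma \ref{lmm:low-term-lemma} does not apply directly; the miracle is a numerical cancellation between $R_k$ and $S_k$.

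For $R_k$, since $R_k$ is a derivation and $R_k\ch_1(\mathbf{p}) = \prod_{j=0}^{k}(1+j+2-2)\,\ch_{k+1}(\mathbf{p}) = (k+1)!\,\ch_{k+1}(\mathbf{p})$, I get
\[
R_k(\ch_1(\mathbf{p})D) = (k+1)!\,\ch_{k+1}(\mathbf{p})D + \ch_1(\mathbf{p})\,R_kD,
\]
and the second summand vanishes in $H^\bullet(M,\mathbb{Q})$ by Lemma \ref{lmm:ch0-ch1-identities}(2). For $T_k$, multiplication by $T_k$ commutes with multiplication by $\ch_1(\mathbf{p})$, so $T_k(\ch_1(\mathbf{p})D) = \ch_1(\mathbf{p}) T_kD = 0$ in cohomology, again by Lemma \ref{lmm:ch0-ch1-identities}(2).

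For $S_k$, I expand using that $R_{-1}$ is a derivation with $R_{-1}\ch_i(\gamma) = \ch_{i-1}(\gamma)$:
\[
S_k(\ch_1(\mathbf{p})D) = \frac{(k+1)!}{r}\Bigl(\ch_k(\mathbf{p})\ch_1(\mathbf{p})D + \ch_{k+1}(\mathbf{p})\ch_0(\mathbf{p})D + \ch_{k+1}(\mathbf{p})\ch_1(\mathbf{p}) R_{-1}D\Bigr).
\]
The first and third terms vanish by Lemma \ref{lmm:ch0-ch1-identities}(2), while the middle term contributes, using $\ch_0(\mathbf{p}) = -r\int_X\mathbf{p} = -r$ from Lemma \ref{lmm:ch0-ch1-identities}(1), the value $-(k+1)!\,\ch_{k+1}(\mathbf{p})D$. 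Summing the three pieces gives
\[
(k+1)!\,\ch_{k+1}(\mathbf{p})D \;+\; 0 \;-\; (k+1)!\,\ch_{k+1}(\mathbf{p})D \;=\; 0
\]
in $H^\bullet(M,\mathbb{Q})$, which is exactly the claim.

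There is no real obstacle here: the proof is a short bookkeeping calculation. The only thing to be careful about is remembering that the identities $\ch_1(\gamma)=0$ and $\ch_0(\mathbf{p})=-r$ hold only after applying the geometric realisation map $\mathbb{D}^X \to H^\bullet(M,\mathbb{Q})$, so the equality in the statement is likewise asserted in cohomology rather than in $\mathbb{D}^X$ itself; this is precisely why the factor $1/r$ built into the definition of $S_k$ is designed to meet the coefficient $(k+1)!$ arising from $R_k\ch_1(\mathbf{p})$.
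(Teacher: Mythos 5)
Your proof is correct and follows essentially the same route as the paper: compute $R_k$, $T_k$, $S_k$ separately, use Lemma \ref{lmm:ch0-ch1-identities} to kill all terms containing $\ch_1(\mathbf{p})$ in $H^\bullet(M,\mathbb{Q})$ and to evaluate $\ch_0(\mathbf{p})=-r$, and observe the cancellation $(k+1)!\ch_{k+1}(\mathbf{p})D - (k+1)!\ch_{k+1}(\mathbf{p})D = 0$. You merely spell out the derivation expansion of $R_{-1}$ (and the remark that the identity holds after geometric realisation) which the paper leaves implicit.
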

\begin{proof}
  Lemma \ref{lmm:ch0-ch1-identities} tells us that $\ch_1(\mathbf{p}) = 0$ in $H^\bullet(M, \mathbf{Q})$, hence $T_k(\ch_1(\mathbf{p})) = 0$. By the same lemma,
  \[
    R_k(\ch_1(\mathbf{p})D) = R_k(\ch_1(\mathbf{p}))D = (k+1)!\ch_{k+1}(\mathbf{p})D
  \]
  and
  \[
    S_k(\ch_1(\mathbf{p})D) = \frac{(k+1)!}{r}R_{-1}(\ch_{k+1}(\mathbf{p})\ch_1(\mathbf{p})D) = \frac{(k+1)!}{r}\ch_{k+1}(\mathbf{p})\ch_0(\mathbf{p})D.
  \]
  Now $\ch_0(\mathbf{p}) = -r$ by the other identity from Lemma \ref{lmm:ch0-ch1-identities}. So we are done.
\end{proof}

\begin{proposition}
  \label{prp:k-0--1}
  Conjecture \ref{sec:strong-conjecture} hold when $k = -1$ or $k = 0$.
\end{proposition}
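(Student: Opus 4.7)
The plan is to compute $\mathcal{L}_k D = R_k D + T_k D + S_k D$ explicitly in $H^\bullet(M,\mathbb{Q})$ using Lemma~\ref{lmm:ch0-ch1-identities} (namely $\ch_0(\gamma) = -r\int_X \gamma$ and $\ch_1(\gamma) = 0$) as the main input. In both cases I expect the argument to reduce to a direct verification, with no deep ingredient beyond these low-degree identities.

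For $k = -1$, I would first check that $T_{-1} = 0$ already in $\mathbb{D}^X$: the second sum is empty because $a + b = -1$ has no nonnegative solutions, and in the first sum ($a + b = 1$) every Künneth component of $\Delta_*1$ satisfies $d^L + d^R = 2$, so one of $(a + d^L - 2)!$, $(b + d^R - 2)!$ is always a factorial of a negative integer and the whole sum vanishes by convention. Next, using that $R_{-1}$ is a derivation with $R_{-1}\ch_0(\mathbf{p}) = \ch_{-1}(\mathbf{p}) = 0$, I would compute
\[
  S_{-1}D \;=\; \tfrac{1}{r} R_{-1}\bigl(\ch_0(\mathbf{p}) D\bigr) \;=\; \tfrac{1}{r}\ch_0(\mathbf{p})\, R_{-1}D,
\]
which equals $-R_{-1}D$ in cohomology by Lemma~\ref{lmm:ch0-ch1-identities}. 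Combined with $T_{-1} = 0$, this gives $\mathcal{L}_{-1}D = 0$ in $H^\bullet(M,\mathbb{Q})$.

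For $k = 0$, I may restrict to pure-degree monomials $D$ with $\deg D = \dim M$, since $\mathcal{L}_0$ preserves degree and the integral vanishes for degree reasons otherwise. On such $D$ the derivation $R_0$ acts as multiplication by $\dim M$, and the same manipulation as above gives $S_0 D = -D$ in cohomology. For $T_0$ I would do a case analysis over the three choices of $(a,b)$ with $a + b = 2$ and the three Künneth components of $\Delta_*1$ (which satisfy $d^L + d^R = 2$); exactly three of the nine combinations survive the factorial constraint, namely $(a,b,d^L,d^R) = (0,2,2,0), (1,1,1,1), (2,0,0,2)$. The middle contribution involves only $\ch_1\ch_1$ terms and vanishes in cohomology; summing the other two with the unique surviving term of the second sum and substituting $\ch_0(\mathbf{p}) = -r$ gives $T_0 = -2r\ch_2(1) + r^2\chi(\mathcal{O}_X)$ in $H^0(M,\mathbb{Q})$.

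To finish I would evaluate $\ch_2(1)$ by expanding the degree-$2$ part of $\ch(-\mathcal{E} \otimes \det(\mathcal{E})^{-1/r}) = -\ch(\mathcal{E})\, e^{-c_1(\mathcal{E})/r}$: the result is $\Delta(\mathcal{E})/(2r)$ where $\Delta(\mathcal{E}) = 2rc_2(\mathcal{E}) - (r-1)c_1(\mathcal{E})^2$ is the discriminant, and integration over $X$ gives $\ch_2(1) = (2rc_2 - (r-1)c_1^2)/(2r)$. The assumption that $X$ has only $(p,p)$-cohomology forces $\chi(\mathcal{O}_X) = 1$, and combined with the standard virtual-dimension formula $\dim M = 2rc_2 - (r-1)c_1^2 - (r^2 - 1)\chi(\mathcal{O}_X)$ this collapses $T_0$ to $1 - \dim M$. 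Therefore $\mathcal{L}_0 D = \dim M \cdot D + (1-\dim M)D - D = 0$ in cohomology. The main obstacle is the bookkeeping in $T_0$: getting the signs, factorials and Künneth pairings to line up so that the coefficient of $\ch_2(1)$ comes out to exactly $-2r$, which is precisely what is needed to cancel against the virtual dimension.
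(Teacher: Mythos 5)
Your proposal is correct and follows essentially the same route as the paper: for $k=-1$ you show $T_{-1}=0$ and $S_{-1}=-R_{-1}$ in cohomology via Lemma~\ref{lmm:ch0-ch1-identities}, and for $k=0$ you compute $R_0 = \deg$, $S_0 = -1$, and $T_0 = -2r\ch_2(1) + r^2$, evaluating $\ch_2(1)$ by the discriminant and matching it against the virtual-dimension formula, exactly as in the paper's argument (the paper phrases the endgame as $\int \mathcal{L}_0 D = (\deg D - \vdim M)\int D$ rather than restricting to $\deg D = \vdim M$ up front, but this is the same point).
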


\begin{proof}
  For $k = -1$, note that
  \[
    S_{-1}D = \frac1rR_{-1}(\ch_0(\mathbf{p})D) = \frac1r \ch_{-1}(\mathbf{p})D + \frac1r \ch_0(\mathbf{p}) R_{-1}D.
  \]
  In $H^\bullet(M, \mathbb{Q})$, we have that $\ch_{-1}(\mathbf{p}) = 0$ for degree reasons and $\ch_0(\mathbf{p}) = -r$ by Lemma \ref{lmm:ch0-ch1-identities}. Hence we get $S_{-1} = - R_{-1}$. Next we show $T_{-1} = 0$. The second sum is empty. In the first sum, to have both $a + d^L - 2$ and $b + d^R - 2$ nonnegative, we must have $a + b - 2 = a + b + d^L - 2 + d^R - 2 \geq 0$, but $a + b = 1$.
  
  For $k = 0$, again we assume that $D = \prod_j \ch_{i_j}(\gamma_j)$ with $\gamma_j$ of pure degree. By induction we see that $R_0D = (\deg D) D$. By using that $\ch_0(\mathbf{p}) = -r$ and $\ch_1(\mathbf{p}) = 0$, we compute that $S_{0}D = -D$. Finally, consider $T_0$. In the second sum we need to consider the K\"unneth decomposition of $\mathbf{p}$, which is $\mathbf{p} \otimes \mathbf{p}$. Also, since $X$ only has $(p,p)$-cohomology, $\chi(X, \mathcal{O}_X) = 1$. Since $a + b = 0$ in this sum, $a = b = 0$ and the second sum becomes $\ch_0(\mathbf{p})\ch_0(\mathbf{p}) = r^2$.

  In the first sum we have that $a + b = 2$. Since $\ch_1(\gamma) = 0$ by Lemma~\ref{lmm:ch0-ch1-identities}, we do not need to consider $a = b = 1$. If $a = 0$ and $b = 2$ we must have $d^L = 2$ and $d^R = 0$, otherwise the factorials become negative. So we only have to deal with the K\"unneth component in $H^4(X, \mathbb{Q}) \otimes H^0(X, \mathbb{Q})$, which is $\mathbf{p} \otimes 1$. So for $a = 0$ and $b = 2$ we get $-\ch_0(\mathbf{p})\ch_2(1) = r\ch_2(1)$. For $a = 2$ and $b = 0$ we get the same result, so taking everything together we find that $T_0 = -2r\ch_2(1) + r^2$. Since $\ch_2(1)$ is of degree zero, we can compute it by picking a point $[E] \in M$ and noticing that $\ch_2(1)|_{[E]} = \ch_2(-E \otimes \det E^{-1/r})$ by the push-pull formula. We have fixed $r$, $\det E$ and $c_2$, so we can calculate this in a similar manner to Lemma \ref{lmm:ch0-ch1-identities} and obtain that
  \[
    2r\ch_2(1) = 2r\ch_2(-E\otimes \det E^{-1/r}) = 2r\frac{2rc_2 - c_1(\Delta)^2(r - 1)}{2r} = \vdim M + (r^2 - 1)\chi(X, \mathcal{O}_X)
  \]
  Keeping in mind that $\chi(X, \mathcal{O}_X) = 1$, we find that $T_0 = -\vdim M - r^2 + 1 + r^2 = -\vdim M + 1$. Finally, $T_0 + S_0 = -\vdim M$. Then we obtain
  \[
    \int_{[M]^{\mathrm{vir}}} \mathcal{L}_0D = (\deg D - \vdim M) \int_{[M]^{\mathrm{vir}}} D.
  \]
  If the integral is nonzero, then $\deg D = \vdim M$, in which case the first factor vanishes.
\end{proof}

\subsection{The Virasoro bracket}

The operators $L^{\GW}_k$ from Gromov-Witten theory satisfy the Virasoro bracket. In our situation there is a Virasoro bracket as well, but it requires new notation. This notation is much more convenient than the notation we employed before, which we only use because of its history, and because the new notation was only discovered after the rest of the paper was already written.

\begin{definition}
  For $i \geq 0$ and $\gamma \in H^\bullet(X, \mathbb{Q})$ of pure degree, define $h_i(\gamma)$ as $i!\ch_{i + 2 - \deg \gamma}(\gamma) \in \mathbb{D}^X$. Extend the definition to all $\gamma$ by linearity. Let $\mathbb{D}^X_+$ be the subalgebra of $\mathbb{D}^X$ generated by the $h_i(\gamma)$.
\end{definition}

Note that $h_i(\gamma)$ always has degree $i$. The next proposition is immediate.

\begin{proposition}
  The subalgebra $\mathbb{D}^X_+$ is the algebra of elements of nonnegative degree.
\end{proposition}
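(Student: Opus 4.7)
The plan is to establish equality by proving two inclusions, each of which reduces to a direct index substitution. Let $\mathcal{A} \subseteq \mathbb{D}^X$ denote the subalgebra generated by those homogeneous generators $\ch_j(\gamma)$ (with $\gamma$ of pure cohomological degree) whose grading in $\mathbb{D}^X$ is nonnegative, i.e.\ those satisfying $j + \deg\gamma - 2 \geq 0$. The proposition then amounts to the identity $\mathbb{D}^X_+ = \mathcal{A}$.

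For the inclusion $\mathbb{D}^X_+ \subseteq \mathcal{A}$, I would note that by the definition of $h_i(\gamma)$, each generator of $\mathbb{D}^X_+$ equals $i!\,\ch_{i+2-\deg\gamma}(\gamma)$, a nonzero rational multiple of a single generator of $\mathbb{D}^X$. A direct computation shows that the degree of this generator in $\mathbb{D}^X$ is $(i + 2 - \deg\gamma) + \deg\gamma - 2 = i \geq 0$, so it lies in $\mathcal{A}$. Products of such generators therefore remain in $\mathcal{A}$, as do their $\mathbb{Q}$-linear combinations.

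For the reverse inclusion $\mathcal{A} \subseteq \mathbb{D}^X_+$, I would start from an arbitrary generator $\ch_j(\gamma)$ of $\mathcal{A}$ (so $\gamma$ is of pure degree and $j + \deg\gamma - 2 \geq 0$) and set $i := j + \deg\gamma - 2 \geq 0$. Then the definition of $h_i$ gives $h_i(\gamma) = i!\,\ch_j(\gamma)$, so $\ch_j(\gamma) = h_i(\gamma)/i!$ lies in $\mathbb{D}^X_+$. Since every generator of $\mathcal{A}$ thus belongs to $\mathbb{D}^X_+$, so does every element of $\mathcal{A}$.

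There is no genuine obstacle: both directions are captured by the degree-preserving bijection $i \leftrightarrow j = i + 2 - \deg\gamma$ between $\{i \geq 0\}$ and $\{j : j + \deg\gamma - 2 \geq 0\}$, which is precisely why the author labels the proposition as immediate. The only real content, beyond unwinding notation, is the observation that the $h_i(\gamma)$ cover exactly the nonnegative-degree $\ch_j(\gamma)$ up to nonzero rational scalars.
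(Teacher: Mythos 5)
Your argument is correct and is essentially the paper's: the paper gives no proof at all (it notes that $h_i(\gamma)$ has degree $i$ and calls the proposition immediate), and the content behind that word is exactly your degree-preserving index shift $j = i + 2 - \deg\gamma$ identifying the $h_i(\gamma)$ with the nonnegative-degree generators $\ch_j(\gamma)$ up to the nonzero factor $i!$. Note that, like the author, you implicitly read ``the algebra of elements of nonnegative degree'' as the subalgebra generated by the nonnegative-degree generators $\ch_j(\gamma)$; this is the intended (and only tenable) reading, since for instance $\ch_0(1)\ch_4(1)$ is homogeneous of degree $0$ yet, $\mathbb{D}^X$ being a polynomial ring on the $\ch_j(\gamma_a)$, it does not lie in $\mathbb{D}^X_+$.
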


\begin{definition}
  Fix integers $r$ and $k$ with $k \geq -1$. Define the operator $R_k^+$ on $\mathbb{D}_+^X$ as a derivation, which acts as $R_k^+(h_i(\gamma)) = ih_{i+k}(\gamma)$ on generators. For $\gamma_1$ and $\gamma_2$ of pure degree, define
  \[
    t_k(\gamma_1, \gamma_2) = \sum_{a + b = k} (-1)^{2 - \deg \gamma_1}h_a(\gamma_1)h_b(\gamma_2).
  \]
  Extend the definition by bilinearity. Then define the operator $T_k^+$ as multiplication by the constant element
  \[
    T_k^+ = \sum_i t_k(\gamma_i^L, \gamma_i^R)
  \]
  where $\sum_i \gamma_i^L \otimes \gamma_i^R = \Delta_*\td_X$, the K\"unneth decomposition of the Todd class of $X$. Finally let $S_k^+$ be defined by $S_k^+D = \frac{1}{r}R_{-1}(h_{k+1}(\mathbf{p})D)$. Let $L_k^+ = R_k^+ + T_k^+$ and $\mathcal{L}_k^+ = L_k^+ + S_k$.
\end{definition}

The operators $R_k^+$, $T_k^+$ and $S_k^+$ almost agree with their counterparts of Definition \ref{def:virasoro-operator}. In fact for $k \geq 0$ they agree on all elements of $\mathbb{D}^X_+$, see below. For $k = -1$ we have $T_k^+ = T_k = 0$, so they are the same as well. Finally, $R_{-1}^+$ and $R_{-1}$ agree on elements of positive degree, but not on elements of degree zero. Indeed, $R_{-1}$ sends elements of degree zero to elements of degree $-1$, while $R_{-1}^+$ simply sends those to zero.

\begin{proposition}
  For $k \geq 0$,  we have $R_k^+ = R_k$, $T_k^+ = T_k$ and $S_k^+ = S_k$. Furthermore, Conjecture \ref{sec:strong-conjecture} holds if and only if for every $k \geq -1$, $r \geq 1$, $\Delta$ a line bundle on $X$, $c_2$ an integer and $H$ a polarisation on $X$ such that $M=M_X^H(r, \Delta, c_2)$ contains only stable sheaves, and $D$ any element of $\mathbb{D}_+^ X$, we have
  \begin{equation}\label{eq:conjecture-alternative}
    \int_{[M]^{\text{vir}}} \mathcal{L}^+_kD = 0.
  \end{equation}
\end{proposition}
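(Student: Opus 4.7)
The plan is to verify the three operator identities for $k \geq 0$ directly from the definitions, and then use them, together with Lemmas~\ref{lmm:ch0-ch1-identities} and~\ref{lmm:low-term-lemma}, to establish the equivalence of the two formulations of the conjecture.

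The basic input for the operator identities is the relation $h_i(\gamma) = i!\,\ch_{i+2-d}(\gamma)$ for $\gamma$ of pure complex degree $d$. For $R_k^+ = R_k$, both sides are derivations on $\mathbb{D}_+^X$, so it suffices to check generators; expanding the definition of $R_k$ gives
\[ R_k h_i(\gamma) = i!\,i(i+1)\cdots(i+k)\,\ch_{i+k+2-d}(\gamma) = i\cdot(i+k)!\,\ch_{i+k+2-d}(\gamma) = i\,h_{i+k}(\gamma),\]
which matches $R_k^+ h_i(\gamma)$. For $S_k^+ = S_k$ the identity is immediate from $h_{k+1}(\mathbf{p}) = (k+1)!\,\ch_{k+1}(\mathbf{p})$ (since $\deg \mathbf{p} = 2$), which converts the $1/r$ prefactor in $S_k^+$ into the $(k+1)!/r$ prefactor in $S_k$.

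The heart of the argument is $T_k^+ = T_k$. I would K\"unneth-decompose
\[ \Delta_*\td_X = \Delta_* 1 + \tfrac{1}{2}\Delta_* c_1(X) + \chi(X,\mathcal{O}_X)\,\Delta_*\mathbf{p},\]
and identify $\Delta_* c_1(X) = c_1(X)\otimes\mathbf{p} + \mathbf{p}\otimes c_1(X)$ and $\Delta_*\mathbf{p} = \mathbf{p}\otimes\mathbf{p}$ by Poincar\'e duality on $X$. The $\chi\,\Delta_*\mathbf{p}$ piece reproduces the second sum of $T_k$ once $h_a(\mathbf{p})h_b(\mathbf{p})$ is rewritten as $a!b!\,\ch_a(\mathbf{p})\ch_b(\mathbf{p})$. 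The middle piece cancels: the two K\"unneth summands contribute $-h_a(c_1(X))h_b(\mathbf{p})$ and $+h_a(\mathbf{p})h_b(c_1(X))$ respectively because of the $(-1)^{2-\deg\gamma_1}$ sign, and a reindexing $a\leftrightarrow b$ in the second term, together with commutativity of $\mathbb{D}^X$, produces the negative of the first. For the $\Delta_*1$ piece, the substitution $a' = a+2-d^L$, $b' = b+2-d^R$ converts $\sum_{a+b=k}$ into $\sum_{a'+b'=k+2}$ and $a!b!$ into $(a'+d^L-2)!(b'+d^R-2)!$; the convention that factorials of negative arguments vanish matches the requirement $a,b\geq 0$ for the $h$'s, and the sign identity $(-1)^{2-d^L} = -(-1)^{(d^L+1)(d^R+1)}$ (checked case-by-case on $(d^L,d^R)\in\{(0,2),(1,1),(2,0)\}$) absorbs the leading minus sign in the first sum of $T_k$.

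For the equivalence, the forward direction reduces to the $k = -1$ case since for $k\geq 0$ the operators agree on $\mathbb{D}_+^X$; the discrepancy $R_{-1} - R_{-1}^+$ is a derivation that vanishes on $h_i(\gamma)$ for $i\geq 1$ and sends $h_0(\gamma)$ to $\ch_{1-d}(\gamma)$, which by Lemma~\ref{lmm:ch0-ch1-identities} is zero in $H^\bullet(M,\mathbb{Q})$ for each $d\in\{0,1,2\}$. For the reverse direction I would reduce to monomials $D = \prod_j \ch_{i_j}(\gamma_j)$: either every factor has nonnegative degree and $D \in \mathbb{D}_+^X$, in which case the operator identities combined with (\ref{eq:conjecture-alternative}) give the required vanishing, or some factor $\ch_{i_0}(\gamma_0)$ has strictly negative degree, and Lemma~\ref{lmm:low-term-lemma} pulls it outside $\mathcal{L}_k$, after which its vanishing in cohomology for degree reasons forces the integral to be zero. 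The hardest single step will be the sign and factorial bookkeeping in $T_k^+ = T_k$, and in particular the cancellation of the $c_1(X)/2$ contribution, which exploits both the symmetry of $\Delta_* c_1(X)$ and the parity-sensitive sign $(-1)^{2-\deg\gamma_1}$ in the definition of $t_k$.
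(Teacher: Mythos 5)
Your proposal is correct and follows essentially the same route as the paper: it checks $R_k = R_k^+$ on the generators $h_i(\gamma)$, splits $\Delta_*\td_X$ into the $\Delta_*1$, $\Delta_*\frac{c_1(X)}{2}$ and $\Delta_*\mathbf{p}$ pieces with the middle piece cancelling via the sign in $t_k$, converts factorials for $S_k$, and uses Lemma \ref{lmm:low-term-lemma} to dispose of monomials with a negative-degree factor. The only (harmless) divergence is at $k = -1$, where the paper simply notes both statements hold unconditionally (Proposition \ref{prp:k-0--1}, and $\mathcal{L}_{-1}^+$ vanishing after geometric realisation), whereas you track the discrepancy $R_{-1} - R_{-1}^+$ and kill it in $H^\bullet(M,\mathbb{Q})$ via Lemma \ref{lmm:ch0-ch1-identities}; both arguments work.
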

\begin{proof}
  For $R_k = R_k^+$, one just has to verify it for the $h_i(\gamma)$, which is immediate. We noted before that $R_{-1}^+ = R_{-1}$ for elements of positive degree, and since $h_{k+1}(\mathbf{p})D$ has positive degree for $k \geq 0$ and $D \in \mathbb{D}^X_+$, $S_k = S_k^+$ for $k\geq0$. The equality $T_k  = T_k^+$ means that these elements simply coincide. Recall that $\td_X = 1 + \frac{c_1(X)}{2} + \frac{c_1(X)^2 + c_2(X)}{12}$. The K\"unneth decomposition of $\Delta_*\frac{c_1(X)}{2}$ is $\frac{c_1(X)}{2} \otimes \mathbf{p} + \mathbf{p} \otimes \frac{c_1(X)}{2}$, but $t_k\left(\mathbf{p}, \frac{c_1(X)}{2}\right) = -t_k\left(\frac{c_1(X)}{2}, \mathbf{p}\right)$ because of the sign in the definition, so this part does not contribute. The K\"unneth decompositions of $\Delta_*1$ and $\Delta_*\frac{c_1(X)^2 + c_2(X)}{12}$ correspond to the two sums in Definition \ref{def:virasoro-operator}. The only thing there is to check is that the signs in both definitions are the same, which is not difficult.

  The second claim follows because we can use Lemma \ref{lmm:low-term-lemma} to see that the conjecture is automatic if $D \notin \mathbb{D}^X_+$. Thus the second claim for $k \geq 0$ follows immediately. For $k = -1$ the statement of Conjecture~\ref{sec:strong-conjecture} is equivalent to the statement of the proposition because they are both true: for the conjecture it follows from Proposition~\ref{prp:k-0--1} and for the proposition it follows because $\mathcal{L}_{-1}^+ = 0$.
\end{proof}

Another advantage of using the new notation, is that $L_k^+ = R_k^+ + T_k^+$ satisfies the Virasoro bracket in full generality. This is not true if we consider only $R_k + T_k$ on $\mathbb{D}^X$. It is also not true in the stable pair setting \cite{MOOP_unpublished}, where the bracket is only satisfied after introducing a new formal symbol and using a weaker notion of equality of operators. Finally, note that $L_k^+$ does not depend on the rank $r$, only $\mathcal{L}_k^+$ does so.

\begin{proposition}
  \label{prp:virasoro-bracket}
  The operator $L^+_k$ satisfies the Virasoro bracket as operators on $\mathbb{D}_+^X$, i.e.
  \begin{equation} \label{eq:virasoro-bracket-sheaves} [L^+_k, L^+_m] = (m - k)L^+_{m + k} \end{equation}
  for all $m, k \geq -1$. 
\end{proposition}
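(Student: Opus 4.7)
The plan is to expand the bracket as
\[
 [L^+_k, L^+_m] = [R^+_k, R^+_m] + [R^+_k, T^+_m] - [R^+_m, T^+_k] + [T^+_k, T^+_m],
\]
and handle each piece. The last term is zero since $T^+_k$ and $T^+_m$ are both multiplication operators by elements of the commutative algebra $\mathbb{D}^X_+$. The first term is purely a derivation computation: both sides are derivations on $\mathbb{D}^X_+$, so it suffices to check the identity on generators $h_i(\gamma)$. Direct computation gives $R^+_k R^+_m h_i(\gamma) - R^+_m R^+_k h_i(\gamma) = i(i+m)h_{i+k+m}(\gamma) - i(i+k)h_{i+k+m}(\gamma) = i(m-k)h_{i+k+m}(\gamma) = (m-k)R^+_{m+k}h_i(\gamma)$.

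For the two cross terms I will use that if $T$ is multiplication by $t \in \mathbb{D}^X_+$ and $R$ is any derivation, then $[R,T]$ is multiplication by $R(t)$. Hence it suffices to prove the key identity
\[
 R^+_k(T^+_m) - R^+_m(T^+_k) = (m-k)\, T^+_{m+k},
\]
which, together with the $R$-bracket, yields the desired Virasoro relation by linearity. Because $R^+_k$ is a derivation, applying it to each summand $t_m(\alpha,\beta) = \sum_{a+b=m}(-1)^{2-\deg\alpha} h_a(\alpha)h_b(\beta)$ of $T^+_m$ produces
\[
 R^+_k\, t_m(\alpha,\beta) = (-1)^{2-\deg \alpha}\sum_{a+b=m}\bigl(a\,h_{a+k}(\alpha)h_b(\beta) + b\,h_a(\alpha)h_{b+k}(\beta)\bigr),
\]
and the plan is to reindex both shifted sums in terms of $(c,d)$ with $c+d = m+k$.

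The main computational step is to verify that, for each such $(c,d)$, the coefficient of $h_c(\alpha)h_b(\beta)$ in $R^+_k t_m(\alpha,\beta) - R^+_m t_k(\alpha,\beta)$ equals the constant $m-k$, independently of where $c$ sits in the range $\{0,\dots,m+k\}$. Assuming WLOG $m \geq k$, I split the range into $c < k$, $k \leq c \leq m$, and $c > m$; in each sub-range the contributions from the two shifted sums differ depending on which indicator functions survive, but a short case analysis shows the net coefficient is always $m-k$. This gives $R^+_k t_m(\alpha,\beta) - R^+_m t_k(\alpha,\beta) = (m-k)\, t_{m+k}(\alpha,\beta)$, and summing over the Künneth decomposition of $\Delta_* \td_X$ gives the desired identity for $T^+$.

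The hard part is precisely this case analysis for the cross-term identity, since one must track how the shifts $a \mapsto a+k$ and $b \mapsto b+k$ interact with the endpoints $a,b \geq 0$; the fact that the answer is independent of $c$ and produces exactly $m-k$ is the content of why the bracket closes. Everything else — the Leibniz expansion, the triviality of $[T^+_k, T^+_m]$, the derivation check on $R^+$'s — is routine.
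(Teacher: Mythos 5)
Your proposal is correct and follows essentially the same route as the paper: reduce the bracket to the three pieces $[R^+_k,R^+_m]$, $[T^+_k,T^+_m]=0$, and the cross terms, observe that the cross terms amount to the identity $R^+_k(T^+_m)-R^+_m(T^+_k)=(m-k)T^+_{m+k}$, and verify this on each K\"unneth summand $t_m(\gamma_1,\gamma_2)$ by counting the coefficient of $h_c(\gamma_1)h_d(\gamma_2)$ with $c+d=m+k$ in a short case analysis. The only difference is cosmetic: the paper splits cases by comparing $a$ and $b$ with $k$ and works out one representative case, while you organize the same count by the position of $c$ relative to $k$ and $m$.
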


For the proof, we first prove two lemma'.

\begin{lemma}
  \label{lmm:r-k-bracket} 
  The operators $R^+_k$ satisfy the Virasoro bracket: $[R_k^+, R_m^+] = (m - k)R_{m+k}^+$ for all $k, m \geq -1$.
\end{lemma}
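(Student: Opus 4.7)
The plan is very direct: since the commutator of two derivations is again a derivation, and $R_{m+k}^+$ is a derivation by definition, it suffices to verify the identity~\eqref{eq:virasoro-bracket-sheaves} on algebra generators $h_i(\gamma)$ of $\mathbb{D}_+^X$ (we can further reduce to $\gamma$ of pure degree, though this is not even needed since $h_i$ is linear in $\gamma$).

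On such a generator, I simply compute both sides. Applying the definition $R_k^+(h_i(\gamma)) = i\, h_{i+k}(\gamma)$ twice gives
\[
R_k^+ R_m^+ h_i(\gamma) = i(i+m)\, h_{i+m+k}(\gamma), \qquad R_m^+ R_k^+ h_i(\gamma) = i(i+k)\, h_{i+k+m}(\gamma),
\]
so the bracket evaluates to
\[
[R_k^+, R_m^+] h_i(\gamma) = i\bigl((i+m) - (i+k)\bigr) h_{i+m+k}(\gamma) = (m-k)\, i\, h_{i+m+k}(\gamma),
\]
which is exactly $(m-k) R_{m+k}^+ h_i(\gamma)$.

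The only minor point worth being careful about is the edge case $k=-1$ or $m=-1$: the definition of $R_k^+$ makes sense uniformly for $k \geq -1$, and since $R_{-1}^+$ sends elements of degree zero to zero (rather than to something of negative degree, as $R_{-1}$ would), the computation above stays inside $\mathbb{D}_+^X$ and no boundary terms appear. So there is effectively no obstacle; the lemma reduces to a one-line identity in the integers, and the role of this lemma in the paper is to serve as the first, cleanest piece of Proposition~\ref{prp:virasoro-bracket}, before the more delicate interaction with $T_k^+$ is handled in the next lemma.
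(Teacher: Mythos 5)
Your proof is correct and is essentially identical to the paper's: both use that the commutator of derivations is again a derivation and then verify the identity on the generators $h_i(\gamma)$ via the same two-line computation $i(i+m) - i(i+k) = (m-k)i$. The remark about the case $k=-1$ or $m=-1$ is a harmless extra observation and does not change the argument.
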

\begin{proof}
  The commutator $[R^+_k, R^+_n]$ is again a derivation and we have
  \[
    R^+_kR^+_nh_i(\gamma) - R^+_nR^+_kh_i(\gamma) = i(i+n)h_{i + n +k}(\gamma) - i(i + k)h_{i+n+k}(\gamma) = (n - k)R^+_{n+k}h_{i}(\gamma).
  \]
  So they also agree on generators.
\end{proof}

\begin{lemma}
  For all $m, k \geq -1$, equation \eqref{eq:virasoro-bracket-sheaves} is equivalent to
  \begin{equation}\label{eq:bracket-alternative}
    R^+_k(T^+_m) - R^+_m(T^+_k) = (m - k)T^+_{m+k}
  \end{equation}
  
\end{lemma}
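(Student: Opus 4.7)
The plan is to expand the commutator $[L_k^+, L_m^+]$ using bilinearity and exploit the fact that $T_k^+$ is multiplication by a constant element while $R_k^+$ is a derivation. Writing $L_k^+ = R_k^+ + T_k^+$, I would start from
\[
  [L_k^+, L_m^+] = [R_k^+, R_m^+] + [R_k^+, T_m^+] + [T_k^+, R_m^+] + [T_k^+, T_m^+]
\]
and handle the four pieces in turn.

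The $RR$-term is immediately $(m-k)R_{m+k}^+$ by Lemma \ref{lmm:r-k-bracket}. The $TT$-term vanishes because $T_k^+$ and $T_m^+$ are both given by multiplication by fixed elements of the commutative algebra $\mathbb{D}^X_+$, so they commute as operators. The two mixed terms are the place where the proof actually does something. Because $R_k^+$ is a derivation and $T_m^+$ is multiplication by an element which, by abuse of notation, I also call $T_m^+$, we have for every $D \in \mathbb{D}^X_+$
\[
  [R_k^+, T_m^+](D) = R_k^+(T_m^+ \cdot D) - T_m^+ \cdot R_k^+(D) = R_k^+(T_m^+) \cdot D,
\]
so $[R_k^+, T_m^+]$ is multiplication by the element $R_k^+(T_m^+) \in \mathbb{D}^X_+$. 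Similarly, $[T_k^+, R_m^+]$ is multiplication by $-R_m^+(T_k^+)$.

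Collecting the four contributions gives
\[
  [L_k^+, L_m^+] = (m-k)R_{m+k}^+ + \text{multiplication by } \bigl(R_k^+(T_m^+) - R_m^+(T_k^+)\bigr),
\]
while the right-hand side of \eqref{eq:virasoro-bracket-sheaves} decomposes as
\[
  (m-k)L_{m+k}^+ = (m-k)R_{m+k}^+ + \text{multiplication by } (m-k)T_{m+k}^+.
\]
Since the derivation part $(m-k)R_{m+k}^+$ matches on both sides, equality of the two operators is equivalent to equality of the remaining multiplication operators, which in turn (since $\mathbb{D}^X_+$ acts faithfully on itself by multiplication) is equivalent to equality of the elements themselves: $R_k^+(T_m^+) - R_m^+(T_k^+) = (m-k)T_{m+k}^+$. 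This is exactly equation \eqref{eq:bracket-alternative}.

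There is no real obstacle here; the main conceptual point is simply the identification of $[R_k^+, T_m^+]$ with multiplication by $R_k^+(T_m^+)$, which uses only the derivation property of $R_k^+$, together with the observation that the algebra acts faithfully on itself so that equality of multiplication operators implies equality of elements.
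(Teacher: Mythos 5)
Your proposal is correct and follows essentially the same route as the paper: expand $[L_k^+,L_m^+]$ into the four commutators, dispose of the $RR$-term via Lemma \ref{lmm:r-k-bracket} and the $TT$-term by commutativity, and identify $[R_k^+,T_m^+]$ with multiplication by $R_k^+(T_m^+)$ using the derivation property. Your explicit remark that equality of multiplication operators gives equality of elements (faithfulness of the algebra acting on itself, e.g.\ by evaluating at $D=1$) is left implicit in the paper but is the same argument.
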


\begin{proof}
  Expanding $L^+_k = R^+_k + T^+_k$ in equation \eqref{eq:virasoro-bracket-sheaves} gives the equation
  \[
    [R^+_k, R^+_m] + [R^+_k, T^+_m] + [T^+_k, R^+_m] + [T^+_k, T^+_m] = (m - k)R^+_{m+k} + (m - k)T^+_{m+k}.
  \]
  By Lemma \ref{lmm:r-k-bracket} and by noting that $[T^+_k, T^+_m] = 0$, we get
  \begin{equation}
    \label{eq:bracket-alternative-2}
    [R^+_k, T^+_m] + [T^+_k, R^+_m] = (m - k)T^+_{m+k}
  \end{equation}
  Finally, note that $[R^+_k, T^+_m] = R^+_k(T^+_m)$ since $R^+_k$ is a derivation and $T^+_m$ is a constant, as
  \[
    R^+_k(T^+_mD) - T^+_mR^+_k(D) = R^+_k(T^+_m)D + T^+_mR^+_k(D) - T^+_mR^+_k(D) = R^+_k(T^+_m)D.
  \]
  holds for all $D$.
\end{proof}

\begin{proof}[Proof of Proposition \ref{prp:virasoro-bracket}]
  We will show that the following analog of \eqref{eq:bracket-alternative} holds for $\gamma_1$ and $\gamma_2$ of pure degree:
  \[
    R_k^+(t_m(\gamma_1, \gamma_2)) - R^+_m(t_k(\gamma_1, \gamma_2)) = (m - k)t_{m+k}(\gamma_1, \gamma_2).
  \]
  Given the above expression of $T_k^+$, this immediately implies equation \eqref{eq:bracket-alternative} and hence completes the proof of the proposition. Assume for simplicity that $(-1)^{2 - \deg \gamma} = 1$, this does not affect the proof in an essential way. Note that both sides of the equation are a linear combination of $h_a(\gamma_1)h_b(\gamma_2)$ with $a + b = m + k$. We count how often each $h_a(\gamma_1)h_b(\gamma_2)$ occurs on the left hand side.

  We can get terms $h_a(\gamma_1)h_b(\gamma_2)$ by either applying $R_k$ to $h_{a - k}(\gamma_1)h_b(\gamma_2)$ or $h_{a}(\gamma_1)h_{b - k}(\gamma_2)$ or by applying $R_m$ to $h_{a - m}(\gamma_1)h_b(\gamma_2)$ or $h_a(\gamma_1)h_{b - m}(\gamma_2)$. But if $b - m < 0$, for example, we get zero automatically for $R_m(h_a(\gamma_1)h_{b - m}(\gamma_2))$. Therefore it is useful to distinguish whether $a$ is less than, equal to, or more than $k$, and similarly for $b$. For example, if $a > k$ and $b > k$, then we also have $a < m$ and $b < m$. So in this case only $R_k(h_{a-k}(\gamma_1)h_b(\gamma_2))$ and $R_k(h_{a}(\gamma_1)h_{b - k}(\gamma_2))$ will contribute. The first contributes $(a - k)h_a(\gamma_1)h_b(\gamma_2)$ and the second contributes $(b - k)h_a(\gamma_1)h_b(\gamma_2)$. So the total contribution is $a - k + b - k = m + k - 2k = m - k$, which is exactly the same as the coefficient on the right hand side. The other cases are similar.
\end{proof}

\begin{remark}
  There is an alternative approach to equation \eqref{eq:bracket-alternative} (or rather, equation \eqref{eq:bracket-alternative-2}). If one knows this equation for $k = 1$ and all $m$, one can perform an inductive argument to show that if the equation holds for some $k \neq 1$, then it also holds for $k + 1$. Only Lemma \ref{lmm:r-k-bracket} and the Jacobi identity are needed for this argument. Thus it suffices to check $k = 1$, $k = -1$ and $k = 2$ to complete the proof.
\end{remark}

Finally, we list some more bracket relations. First, we have
\[
  [L_n^+, h_k(\mathbf{p})] = kh_{n+k}(\mathbf{p}).
\]
This relation also appears in \cite{MOOP_unpublished}. We also have the relation
\[
  [L_{-1}^+, S_k^+] = (k+1)S_{k-1}^+.
\]
This implies that $[L_{-1}^+, \mathcal{L}_k^+] = (k+1)\mathcal{L}_{k-1}$. In view of the above remark, one might hope that a similar inductive argument might be used to shed light on some parts of Conjecture \ref{sec:strong-conjecture}, but the author has not succeeded in this.

\subsection{Deformation invariance}

Let $S$ be a smooth $\mathbb{C}$-scheme and consider a smooth family of surfaces $\mathcal{X} \to S$. Let $r$ be a number, $\Delta$ a line bundle on $\mathcal{X}$ and $c_2$ be a cohomology class in $H^4(\mathcal{X}, \mathbb{Z})$. Then for each $s \in S$, we can construct the moduli space $M_s$ of stable sheaves on $\mathcal{X}_s$ of rank $r$, determinant $\Delta|_{\mathcal{X}_s}$ and second Chern class $c_2|_{\mathcal{X}_s}$.

\begin{proposition}
  \label{prp:deformation-invariance}
  Assume that $\mathcal{X}_s$ has only $(p,p)$-cohomology for each $s \in S$. Then the set of points $s \in S$ such that $M_s$ satisfies the Conjecture \ref{sec:strong-conjecture} is open and closed.
\end{proposition}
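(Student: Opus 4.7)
The plan is to combine the deformation invariance of virtual fundamental classes with the topological triviality of the family of cohomology rings over small analytic neighborhoods, to show that the conjecture is equivalent for any two sufficiently close points of $S$. This yields both openness and closedness simultaneously.

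First, I would reduce to checking the statement locally: fix $s_0 \in S$ and choose a contractible analytic open neighborhood $U \subset S$ of $s_0$. Over such a $U$ the smooth family $\mathcal{X}|_U \to U$ is diffeomorphically trivial, so the local system $R^\bullet(\pi_{\mathcal{X}})_*\mathbb{Q}|_U$ is canonically trivialized. Parallel transport yields canonical ring isomorphisms $H^\bullet(\mathcal{X}_s, \mathbb{Q}) \cong H^\bullet(\mathcal{X}_{s_0}, \mathbb{Q})$ for every $s \in U$, which preserve the cup product, the Künneth decomposition of $\Delta_*1$, and the Todd class, all of which are topological invariants (and have fiberwise type $(p,p)$ by assumption). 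Consequently the algebras $\mathbb{D}^{\mathcal{X}_s}$ and the operators $\mathcal{L}_k$ of Definition \ref{def:virasoro-operator} are identified uniformly over $U$: each $D \in \mathbb{D}^{\mathcal{X}_{s_0}}$ has a canonical parallel-transported counterpart $D_s \in \mathbb{D}^{\mathcal{X}_s}$, and $\mathcal{L}_k D_s$ is the transport of $\mathcal{L}_k D$.

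Second, I would construct the relative moduli space $\pi\colon \mathcal{M}_U \to U$, which is proper and carries a relative perfect obstruction theory after Mochizuki, hence a relative virtual fundamental class. Using a twisted relative universal family $\mathcal{E}$ on $\mathcal{X}|_U \times_U \mathcal{M}_U$ and the observation from Section \ref{sec:notat-first-evid} that the classes $\ch_i(-\mathcal{E} \otimes \det(\mathcal{E})^{-1/r})$ are well-defined despite the Brauer obstruction, I would show that for every monomial $D \in \mathbb{D}^{\mathcal{X}_{s_0}}$ the geometric realization of $\mathcal{L}_k D$ lifts to a cohomology class $\widetilde{\mathcal{L}_k D} \in H^\bullet(\mathcal{M}_U, \mathbb{Q})$ whose restriction to each fiber $M_s$ is the geometric realization of the transported $\mathcal{L}_k D_s$. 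The lift exists because, on the contractible base $U$, each class in $H^\bullet(\mathcal{X}_{s_0}, \mathbb{Q})$ extends uniquely to a global class on $\mathcal{X}|_U$, and the formula for $\mathcal{L}_k$ is built only from such cup products together with fixed topological input.

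Third, I would invoke the standard deformation invariance of virtual fundamental classes in a proper family with a relative perfect obstruction theory: for any class $\alpha \in H^\bullet(\mathcal{M}_U, \mathbb{Q})$ the function
\[
  s \longmapsto \int_{[M_s]^{\mathrm{vir}}} \alpha|_{M_s}
\]
is locally constant on $U$. Applied with $\alpha = \widetilde{\mathcal{L}_k D}$, this shows that $s \mapsto \int_{[M_s]^{\mathrm{vir}}} \mathcal{L}_k D_s$ is constant on $U$ for every $D$ and every $k \geq -1$. Therefore, if $s_0$ satisfies Conjecture \ref{sec:strong-conjecture} then all these integrals vanish on $U$, so the entire neighborhood $U$ lies in the conjecture-locus; conversely, if some integral is nonzero at $s_0$ it remains nonzero throughout $U$, so the complement is also open. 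Hence the conjecture-locus is clopen, as claimed.

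The main technical obstacle is the second step: coherently realising $\mathbb{D}^X$ and the operator $\mathcal{L}_k$ in the relative setting so that the fiberwise restriction recovers the construction of Definition \ref{def:virasoro-operator}. This requires exhibiting a relative universal (possibly twisted) sheaf, checking that the Brauer-class cancellation argument of Section \ref{sec:notat-first-evid} works globally over $U$, and verifying that the Künneth and Todd inputs glue to global cohomology classes; the assumption of fiberwise $(p,p)$-cohomology and constancy of Hodge numbers in smooth families is what makes this possible. Once these lifts are in place, the proposition is a direct application of virtual deformation invariance.
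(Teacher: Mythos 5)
Your proposal is correct and follows essentially the same route as the paper: the paper likewise passes to the relative moduli space with its relative perfect obstruction theory, constructs the classes \eqref{eq:kernel-classes} via an (\'etale-locally existing / twisted) universal family, uses the Ehresmann fibration theorem to identify the fibrewise cohomology and hence the $\ch_i(\gamma)$ as a family of classes on $\mathcal{M}$, and concludes by deformation invariance of the virtual class that the integrals are locally constant, which gives the clopen statement. Your write-up merely spells out more explicitly the parallel transport of the K\"unneth and Todd inputs and the clopen bookkeeping, which the paper leaves implicit.
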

\begin{proof}
  The $M_s$ are fibres of the relative moduli space of stable sheaves $\mathcal{M} \to S$, see \cite[Sec. 4.3]{huybrechts10}. A familiy of universal sheaves exists \'etale locally, so again the sheaf $\mathcal{E}^{\otimes r} \otimes \det \mathcal{E}^ {-1}$ exists on the relative moduli space. Hence we can also construct the classes \eqref{eq:kernel-classes}. Let $s$ be a closed point in $S$. By the Ehresmann fibration theorem, analytically locally around $s$, the family $\mathcal{X}$ is diffeomorphic to a trivial family $\mathcal{X}_s \times S \to S$. Notably, the cohomology of the fibres is the same. So on a analytic neighbourhood around $s$, we can consider the $\ch_i(\gamma)$ as a family of cohomology classes on $\mathcal{M}$. There exists a relative perfect obstruction theory on $\mathcal{M}$ over $\mathbb{A}^1$, which restricts to the usual obstruction theory over the fibres. Thus the virtual fundamental class is deformation-invariant \cite[Prop. 7.2]{behrend97_intrin_normal_cone}. Hence an integral over a polynomial in the $\ch_i(\gamma)$ is locally constant around $s$. This implies our result.
\end{proof}

\section{Invariant sheaves}
\label{sec:invariant-sheaves}

We introduce a combinatorial description of equivariant sheaves on toric surfaces. This description was first found by Klyachko \cite{Klyachko_Equiv_bundle_toral_var} and was later elaborated by Perling \cite{Perling_2004}, who also introduced new notation. Kool \cite{KOOL20111700} proved that one can use the theory to describe the fixed point locus of the moduli space of stable sheaves of toric varieties. Or presentation follows \cite{kool09_euler_charac_modul_spaces_torsion}.

\subsection{Generalities on smooth projective toric surfaces}

We briefly recall the basic theory of toric varieties that we need. This material can be found in \cite{fulton93_toric}. Assume that $X$ is a toric surface. Associated to $X$ is a fan $\Delta$ in $N$, a free abelian group of rank two. Let $M = N^\vee$. We then have the natural pairing $\langle -,-\rangle : M \otimes N \to \mathbb{Z}$. Associated to each cone $\sigma \in \Delta$ is the set $S_\sigma = \{ m \in M \mid \langle m, s \rangle \geq 0 $ for all $s \in \sigma\}$. Denote by $\mathbb{C}[S_\sigma]$ the ring generated by formal symbols $z^m$ for $m \in S_\sigma$ with multiplication $z^{m_1} \cdot z^{m_2} = z^{m_1 + m_2}$ and let $U_{\sigma}$ be $\Spec \mathbb{C}[S_\sigma]$. If $\sigma_1 \subseteq \sigma_2$ is an inclusion of cones, then we have a canonical open embedding $U_{\sigma_1} \subseteq U_{\sigma_2}$. If we glue the $U_{\sigma}$ along all possible inclusions, we recover $X$. Recall that the fact that $X$ is proper is equivalent to the union of the cones in $\Delta$ being equal to $N$. If this is the case, note the following: each two-dimensional cone $\sigma$ is bordered by two rays $\rho_1$ and $\rho_2$. Then $X$ is smooth if and only if the primitive generators $v_1$ and $v_2$ of $\rho_1$ resp. $\rho_2$ form a basis of $N$ and this holds for each two-dimensional cone $\sigma$. Finally, recall that any smooth proper surface is projective \cite[Sec. 9.3.1]{liu06_algeb}.

The Chow ring of a smooth toric variety can be computed as follows. Enumerate the rays in $\Delta$ as $\rho_1, \rho_2, \ldots ,\rho_d$ with primitive generators $v_1, v_2, \ldots, v_d$. For each $1 \leq i \leq d$ we have a generator $D_i$. These are subject to the following relations:
\begin{enumerate}
\item For each $m \in M$, add the relation $\sum_{i = 1}^d \langle m, v_i\rangle D_i = 0$.
\item For each subset $A$ of $\{1, \ldots, d\}$ such that the $v_i$ for $i \in A$ do not generate a cone of $\Delta$ add the relation $\prod_{i \in A} D_i = 0$.
\end{enumerate}

For the first type of relation one can restrict to a basis of $M$. In this language, the class of the canonical sheaf $\omega_X$ is $- \sum_{i = 1}^d D_i$. In particular, $-\omega_X$ is effective.

\begin{example}
  \label{ex:p2-tor-var}
  There is a natural action of $\mathbb{G}_m^2$ on $\mathbb{P}^2$ given by $(s, t) \cdot (x:y:z) = (s^{-1}x : t^{-1}y : z)$. This makes $\mathbb{P}^2$ into a smooth toric variety. The associated fan is $M = \mathbb{Z}^2$ with rays generated by $(1,0)$, $(0, 1)$ and $(-1, -1)$. The three two dimensional cones are generated by two of these vectors. Write $\Spec \mathbb{C}\left[\frac{X}{Z}, \frac{Y}{Z}\right]$ etc. for the usual charts, the induced action on these rings is given by the relations $(s, t)\cdot X = sX$, $(s, t) \cdot Y = tY$ and $(s, t) \cdot Z = Z$.
\end{example}

\subsection{Equivariant sheaves}

The trivial cone $\{0\}$ in $\Delta$ corresponds to a two-dimensional torus $T = \Spec \mathbb{C}[U_{\{0\}}] = \Spec \mathbb{C}[M]$. Then $T$ acts on itself via left multiplication and this action can always be uniquely extended to $X$. The $U_\sigma$ are preserved under the action. We are interested in coherent sheaves which are equivariant under this action. We recall the definition.

\begin{definition}
  Let $G$ be a group scheme and $X$ a $G$-scheme (e.g. $X$ is toric and $G$ is the corresponding torus). Denote by $\mu : G \times G \to G$ the multiplication and by $\sigma: G \times X \to X$ the action. An \emph{equivariant structure} on a sheaf $F$ is an isomorphism
  \[
    \phi : \sigma^* F \to \pi_X^* F
  \]
  of sheaves on $G \times X$ such that the cocycle condition holds on $G\times G\times X$: $\pi_{23}^*\phi \circ (\id_G \times \sigma)^* \phi = (\mu \times \id_X)^*\phi$, where $\pi_{23}$ is the projection to the second and third factor.
\end{definition}

If $X$ and $G$ are affine, then an equivariant sheaf $F$ is just a module over $\Gamma(X, \mathcal{O}_X)$ together with a coaction of the coalgebra $\Gamma(G, \mathcal{O}_G)$. In particular, if $X$ is a toric variety and $G = T$, the corresponding torus, we can describe an equivariant sheaf by specifying for each $\sigma$ a module over $\Gamma(U_{\sigma}, \mathcal{O}_X)$ with a $\Gamma(T, \mathcal{O}_T)$-action, such that for $\sigma_1$ and $\sigma_2$, the modules and their actions agree on the overlap $U_{\sigma_1 \cap \sigma_2}$. Clearly, if a sheaf has an equivariant structure, then it has several, by multiplying with a character of $T$. 

We can exploit the affine cover $U_{\sigma}$ of $X$ to find a combinatorial description of equivariant sheaves. We will use this description to find all equivariant sheaves with certain numerical invariants. We introduce the following combinatorial data due to Perling \cite{Perling_2004}:

\begin{definition}
  Let $\sigma \subseteq \Delta$ be a maximal cone. A \emph{$\sigma$-family} is an $M$-graded vector space $\{F_m\}_{m \in M}$ together with a morphism $\chi_s : F_\bullet \to F_{\bullet + s}$ for each $s \in S_\sigma$ such that $\chi_{s_1 + s_2} = \chi_{s_1} \circ \chi_{s_2}$ and $\chi_0 = \id$.\\
  A $\sigma$-family is called finite if there are a finite number of homogeneous generators.
\end{definition}

An equivariant sheaf $F$ on an affine toric variety $U_\sigma$ gives rise to a $\sigma$-family $\hat{F}$ as follows. First we identify $F$ with the $\mathbb{C}[S_\sigma]$-module $H^0(U_\sigma, F)$. Then the affine group $T = \Spec \mathbb{C}[M]$ acts on $F$, and since every action of a torus is diagonisable, $F$ decomposes into weight spaces as $F = \bigoplus_{m \in M} \hat{F}_m$. Multiplication by $z^s \in \mathbb{C}[S_{\sigma}]$ induces a map $\hat{F}_m \to \hat{F}_{m + s}$. It is not difficult to see that this assignment extends to an equivalence of categories between equivariant coherent sheaves on $U_\sigma$ and finite $\sigma$-families. In the following, the notation $\hat{F}$ will always mean the $\sigma$-family associated to a coherent sheaf $F$.

Let $\sigma$ be a two-dimensional cone. For the smooth affine $U_\sigma$, there is a more concrete description of $S_\sigma$, and hence of the $\sigma$-families. Let $v_1$ and $v_2$ generate the two boundary rays of $\sigma$, then by smoothness this is a basis of $N$. Hence we obtain a dual basis $w_1$, $w_2$ for $M$, and $S_\sigma$ is exactly the set of positive linear combinations of the $w_i$. This implies that $\mathbb{C}[S_\sigma] \cong \mathbb{C}[z^{w_1}, z^{w_2}]$, the usual polynomial ring in two variables. Let $\hat{F}$ be a $\sigma$-family. Define $\hat{F}(n_1, n_2)$ as $\hat{F}_{n_1w_1 + n_2w_2}$ for integers $n_1, n_2$. Note we have maps
\begin{equation}\label{eq:sigma-maps}
  \hat{F}(n_1, n_2) \to \hat{F}(n_1 + 1, n_2) \quad\text{and}\quad \hat{F}(n_1, n_2) \to \hat{F}(n_1, n_2 + 1)
\end{equation}
by multiplication with $z^{w_1}$ and $z^{w_2}$, respectively. The $\hat{F}(n_1, n_2)$ together with these two maps completely determine $\hat{F}$. In fact, we obtain again an equivalence of categories between $\sigma$-families $\hat{F}$ and families $\hat{F}(n_1, n_2)$ with maps as in \eqref{eq:sigma-maps} which make all the squares commute. It is convenient to picture a lattice $\mathbb{Z}^2$ with $\hat{F}(n_1, n_2)$ sitting at the point $(n_1, n_2)$, with horizontal maps going to the right and vertical maps going upwards.

We will now explain how these $\sigma$-families glue to produce equivariant coherent sheaves on $X$. This is easier to describe in the case of a torsion-free sheaf. Since this is the only case we will need, we assume that all our equivariant sheaves are torsion-free from now on. We have the following characterisation in terms of $\sigma$-families.

\begin{lemma}[{\cite[Prop. 5.13]{Perling_2004}}]
  Let $F$ be an equivariant coherent sheaf on $U_\sigma$. Then $F$ is torsion-free if and only if all $\chi_s$ are injective, if and only if all the maps of \eqref{eq:sigma-maps} are injective.\\
  As a consequence, the category of torsion-free equivariant sheaves on $U_\sigma$ is equivalent to the category of finite $\sigma$-families with the additional assumption that the maps $\hat{F}_m \to \hat{F}_{m + s}$ are \emph{inclusions}.
\end{lemma}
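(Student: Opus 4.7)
The plan is to prove three mutually equivalent conditions: (i) $F$ is torsion-free; (ii) every $\chi_s$ is injective for $s \in S_\sigma$; (iii) the two maps in \eqref{eq:sigma-maps} are injective for every $(n_1,n_2)$. The equivalence of categories then follows automatically from the already-established equivalence between finite $\sigma$-families and equivariant coherent sheaves on $U_\sigma$, restricted to the full subcategories cut out by (i) and (iii).

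Two of the implications are essentially formal. First, $\chi_s$ is by construction the action of $z^s \in \mathbb{C}[S_\sigma]$ on the weight decomposition of $H^0(U_\sigma, F)$. Since $S_\sigma$ is a saturated submonoid of the torsion-free group $M$, the semigroup algebra $\mathbb{C}[S_\sigma]$ is an integral domain, so $z^s \neq 0$, and torsion-freeness forces each $\chi_s$ to be injective; this gives (i)$\Rightarrow$(ii). Next, (iii) is just the case $s = w_1, w_2$ of (ii); conversely, since $\sigma$ is smooth, $S_\sigma = \mathbb{N} w_1 \oplus \mathbb{N} w_2$, and for any $s = n_1 w_1 + n_2 w_2 \in S_\sigma$ we have $\chi_s = \chi_{w_1}^{n_1} \circ \chi_{w_2}^{n_2}$, which is a composition of injective maps.

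The nontrivial direction is (ii)$\Rightarrow$(i). Here I would argue by a term-order trick exploiting the $M$-grading. Choose a linear functional $\ell : M \otimes_\mathbb{Z} \mathbb{R} \to \mathbb{R}$ lying in the strict interior of the dual cone of $S_\sigma$ and whose values on a $\mathbb{Z}$-basis of $M$ are linearly independent over $\mathbb{Q}$; then $\ell$ is injective on $M$ and strictly positive on $S_\sigma \setminus \{0\}$. Now suppose $fv = 0$ with $0 \neq f = \sum_s a_s z^s \in \mathbb{C}[S_\sigma]$ and $0 \neq v = \sum_i v_{m_i} \in F$ a homogeneous decomposition with all $v_{m_i} \neq 0$. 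Let $s_{\min}$ be the element in the support of $f$ of minimal $\ell$-value, and $m_{\min}$ the weight of minimal $\ell$-value in the support of $v$; both are well-defined by the injectivity of $\ell$. Setting $\mu = s_{\min} + m_{\min}$, the only pair $(s, m_i)$ with $a_s \neq 0$, $v_{m_i} \neq 0$ and $s + m_i = \mu$ is $(s_{\min}, m_{\min})$ — any other pair has $\ell(s) + \ell(m_i) > \ell(\mu)$. Hence the $\hat{F}_\mu$-component of $fv$ equals $a_{s_{\min}} \chi_{s_{\min}}(v_{m_{\min}})$, which must vanish; since $a_{s_{\min}} \neq 0$ this forces $\chi_{s_{\min}}(v_{m_{\min}}) = 0$ with $v_{m_{\min}} \neq 0$, contradicting (ii).

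The main obstacle is the non-homogeneity of $f$ and $v$: one needs a device to conclude that some \emph{homogeneous} element is annihilated by some $\chi_s$. The linear functional $\ell$ supplies exactly the total order on $M$ needed to isolate a single surviving monomial in the lowest-weight component of $fv$, after which everything reduces to the already-injective maps $\chi_s$.
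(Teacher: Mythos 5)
Your proof is correct, but note that the paper does not prove this lemma at all: it is quoted directly from Perling (Prop.\ 5.13 of the cited reference), so any argument you give is necessarily a different route from the text. Your chain (i)$\Leftrightarrow$(ii)$\Leftrightarrow$(iii) is sound: (i)$\Rightarrow$(ii) and (ii)$\Leftrightarrow$(iii) are formal as you say (for the latter you correctly use smoothness, $S_\sigma=\mathbb{N}w_1+\mathbb{N}w_2$ and $\chi_{s_1+s_2}=\chi_{s_1}\circ\chi_{s_2}$), and the real content is (ii)$\Rightarrow$(i), i.e.\ reducing annihilation of an arbitrary element by an arbitrary ring element to the homogeneous case. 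The usual way to do this (and essentially Perling's) is to observe that over a ring graded by a torsion-free abelian group the torsion submodule of a graded module is itself graded — equivalently, the torsion subsheaf of an equivariant sheaf is equivariant — so torsion-freeness can be tested on homogeneous elements, where it is exactly injectivity of the $\chi_s$. Your generic linear functional $\ell$ (interior of $\sigma$, $\mathbb{Q}$-independent values on a basis of $M$) accomplishes the same reduction by isolating the lowest-order term of $fv$, and the argument is complete: the minimizers $s_{\min}$, $m_{\min}$ are unique, the degree-$\mu$ component of $fv$ is the single term $a_{s_{\min}}\chi_{s_{\min}}(v_{m_{\min}})$, and injectivity of $\chi_{s_{\min}}$ (including the trivial case $s_{\min}=0$, where $\chi_0=\mathrm{id}$) gives the contradiction; existence of $\ell$ uses that $\sigma$ is full-dimensional, which holds since $\sigma$ is a maximal cone of the complete two-dimensional fan. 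Two cosmetic remarks: saturation of $S_\sigma$ is not needed for $\mathbb{C}[S_\sigma]$ to be a domain (a cancellative submonoid of $M$ suffices, and here the ring is just $\mathbb{C}[z^{w_1},z^{w_2}]$), and the passage from ``injective maps'' to ``inclusions'' in the categorical consequence is the standard replacement up to isomorphism of $\sigma$-families, which is worth one sentence but is not a gap.
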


The $\sigma$-family $\hat{F}$ of an equivariant coherent sheaf $F$ on $U_\sigma$ is finitely generated. Hence, for large $n_1$, $n_2$, the inclusions of \eqref{eq:sigma-maps} are actually identities. Assume without loss of generality that this limiting space is $\mathbb{C}^r$. Then $r$ is the rank of $F$. Furthermore, for each fixed $n_1$, we can produce a filtration of $\mathbb{C}^n$. Indeed, since the $\sigma$-family is finite the space $\hat{F}(n_1, n_2)$ is constant for sufficiently large $n_2$. Denote this space by $\hat{F}(n_1, \infty)$. Varying $n_1$ gives us sequence of inclusions

\begin{equation}
  \label{eq:flag}
  \ldots \subseteq \hat{F}(n - 1, \infty) \subseteq \hat{F}(n, \infty), \subseteq \hat{F}+1,\infty) \subseteq \ldots
\end{equation}

This sequence is easily seen to be a finite full flag, defined below.

\begin{definition}
  A \emph{finite full flag} of a vector space $V$ is a sequence of vector spaces $V_\lambda$, with $\lambda \in \mathbb{Z}$ such that $V_{\lambda} \subseteq V_{\lambda+1}$, and $V_\lambda = V$ for sufficiently large $\lambda$ and $V_\lambda = 0$ for sufficiently small $\lambda$. We call $V_{\lambda}$ the space of \emph{weight $\lambda$} of the flag.
\end{definition}

The next theorem is Klyachko's description of equivariant torsion-free sheaves. See also \cite[Sec. 5.4]{Perling_2004} for this and a more general theorem. 

\begin{theorem}
  Let $X$ be a smooth projective toric surface with fan $\Delta$. Suppose that we have a finite $\sigma$-family $\hat{F}_\sigma$ for each two-dimensional $\sigma \in \Delta$ where all maps are inclusions and with limiting space $\mathbb{C}^r$. Then the $F_\sigma$ glue to an equivariant coherent sheaf on $X$ if and only if  condition $(\star)$ holds for each two-dimensional $\sigma_1$ and $\sigma_2$ that share a boundary ray with primitive generator $v$.

  \begin{itemize}
  \item[\emph{($\star$)}]\label{glue-cond} 
    Let $(v_1, w_1)$ and $(v_2, w_2)$ be the (ordered) bases of $M$ associated to $\sigma_1$ and $\sigma_2$ as above, where $v_1(v) = v_2(v) = 1$ and $w_1(v) = w_2(v) = 0$. Then the full finite flags of $\mathbb{C}^r$ given by $F_{\sigma_1}(n, \infty)$ and $F_{\sigma_2}(n, \infty)$ are equal.
  \end{itemize}
  
  Furthermore, this gives an equivalence of categories between equivariant torsion-free coherent sheaves on $X$ and collections of finite $\sigma$-families $\hat{F}_\sigma$ with all $\chi_s$ injective satisfying $(\star)$.
\end{theorem}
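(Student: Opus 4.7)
The plan is to bootstrap from the affine case already established in the lemma just above the theorem: equivariant torsion-free coherent sheaves on $U_\sigma$ are equivalent to finite $\sigma$-families with all transition maps injective. To glue affine data into a sheaf on $X$, I need to understand the restriction to overlaps $U_{\sigma_1}\cap U_{\sigma_2}$, and then translate sheaf-theoretic agreement on overlaps into the flag condition $(\star)$.

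First, for two adjacent two-dimensional cones $\sigma_1,\sigma_2$ sharing a boundary ray $\tau=\mathbb{R}_{\geq 0}v$, one has $U_{\sigma_1}\cap U_{\sigma_2}=U_\tau$, and $U_\tau\cong \mathbb{G}_m\times\mathbb{A}^1$. I would analyse equivariant torsion-free coherent sheaves on $U_\tau$ by the same Perling dictionary: $S_\tau=\{m\in M\mid\langle m,v\rangle\geq 0\}$, so a $\tau$-family is an $M$-graded vector space $\{\hat F_m\}$ with injective maps $\chi_s$ for $s\in S_\tau$. Since for $\langle s,v\rangle=0$ both $s$ and $-s$ lie in $S_\tau$, the maps $\chi_s$ are isomorphisms on each weight, so the family depends only on the $\mathbb{Z}$-grading $\hat F(n)=\hat F_m$ with $\langle m,v\rangle=n$; finiteness then forces $\hat F(n)=0$ for $n\ll 0$ and $\hat F(n)\to\hat F(n+1)$ to stabilise at $\mathbb{C}^r$. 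In other words, the category of torsion-free equivariant coherent sheaves on $U_\tau$ is equivalent to the category of finite full flags of $\mathbb{C}^r$.

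Next I would describe the restriction functor from $\sigma$-families to $\tau$-families explicitly. If $(v_1,w_1)$ is the dual basis associated to $\sigma_1$ with $v_1(v)=1$ and $w_1(v)=0$, then the lattice point $n_1 w_1+n_2 w_2$ pairs with $v$ to give $n_1$; passing to $U_\tau$ inverts $z^{w_2}$, which amounts to taking the colimit $\hat F_{\sigma_1}(n_1,\infty)=\operatorname{colim}_{n_2}\hat F_{\sigma_1}(n_1,n_2)$. Finiteness of the $\sigma_1$-family makes this colimit stabilise, producing exactly the flag appearing in $(\star)$; the analogous statement holds on the $\sigma_2$ side. So the two restrictions to $U_\tau$ are isomorphic as equivariant sheaves if and only if the two finite full flags coincide (as subspaces of the common limiting space $\mathbb{C}^r$, which is canonically identified on both sides as the generic fibre).

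Finally, to produce an equivalence of categories, I would observe that the $U_\sigma$ and the $U_\tau$ for rays $\tau$ form a good cover of $X$ closed under intersection (triple intersections for a smooth surface reduce to the $U_\tau$ or to the torus $T$), so an equivariant sheaf is determined by equivariant sheaves on the $U_\sigma$ together with equivariant isomorphisms of their restrictions to the $U_\tau$. Using the torsion-free hypothesis, those isomorphisms are unique once they exist (since a torsion-free equivariant sheaf on $U_\tau$ is rigid among equivariant extensions of its flag), so the cocycle condition on $U_\tau\cap U_{\tau'}\subseteq T$ is automatic. The hard part is setting up the last bookkeeping cleanly: verifying that the colimit description of the restriction really identifies both $\sigma$-families with the \emph{same} flag inside a single canonically identified $\mathbb{C}^r$ (rather than just abstractly isomorphic flags), and checking that morphisms of collections satisfying $(\star)$ glue uniquely to morphisms of sheaves. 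Once this is done, functoriality and essential surjectivity follow immediately from the affine equivalence in the preceding lemma, and the theorem is proved.
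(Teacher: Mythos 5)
The paper itself offers no proof of this theorem --- it is quoted from Klyachko and Perling (the reference given is \cite{Perling_2004}, Sec.\ 5.4) --- so there is no internal argument to compare against. Your outline follows the same route as those sources: the affine dictionary on each $U_\sigma$, the identification $U_{\sigma_1}\cap U_{\sigma_2}=U_\tau$ for the shared ray $\tau$, the observation that a finite torsion-free $\tau$-family is just a finite full flag (the $\chi_s$ with $\langle s,v\rangle=0$ are invertible), and the fact that restriction to $U_\tau$ is the colimit in the direction orthogonal to $v$, producing exactly $\hat F_\sigma(n,\infty)$. That skeleton is correct, and so is your reduction of the descent data to pairwise overlaps ($U_\tau$ or $T$) and triple overlaps ($T$).

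One step as written is wrong, and it is exactly the point you defer as ``the hard part'': the claim that the gluing isomorphisms over $U_\tau$ ``are unique once they exist.'' The equivariant torsion-free sheaf on $U_\tau$ attached to a flag has many equivariant automorphisms (any filtered automorphism of $\mathbb{C}^r$, e.g.\ scalars), so uniqueness fails, and with it your shortcut to the cocycle condition. The repair is to use the canonical trivialization over the torus: since the weight spaces of $\hat F_\sigma$ are literally subspaces of $\mathbb{C}^r$ stabilizing to $\mathbb{C}^r$, the restriction of $F_\sigma$ to $T$ is canonically $\mathbb{C}^r\otimes\mathcal{O}_T$. Because a morphism between torsion-free equivariant sheaves on $U_\sigma$ or $U_\tau$ is determined by its restriction to the dense torus (the natural map from such a sheaf to the pushforward of its restriction to $T$ is injective), one should take as gluing maps precisely those isomorphisms over $U_\tau$ that restrict to the identity of $\mathbb{C}^r$ over $T$; such a map exists if and only if the two flags are \emph{equal} inside $\mathbb{C}^r$, i.e.\ condition $(\star)$, it is then unique, and the cocycle condition over the triple overlaps (all equal to $T$, where every map is the identity of $\mathbb{C}^r\otimes\mathcal{O}_T$) is automatic. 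The same determination-by-$T$ argument gives the gluing and uniqueness of morphisms, hence full faithfulness, and essential surjectivity follows by restricting a torsion-free equivariant sheaf on $X$ to the charts and identifying all limiting spaces with the canonical fibre over $T$. With that substitution your argument is complete and coincides with the proof in the cited references.
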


Now, we can translate properties of ordinary sheaves into the language of these $\sigma$-families. Recall that a sheaf $F$ is \emph{reflexive} if the canonical map $F \to F^{\vee\vee}$ of $F$ into its double dual is an isomorphism. Every reflexive sheaf is torsion-free. On the other hand, using results from \cite[][Sec. 1.1]{huybrechts10}, if $F$ is torsion-free then the map $F \to F^{\vee\vee}$ is injective and the quotient is zero-dimensional. Furthermore, it follows from \cite[][Sec. 1.1]{huybrechts10} that a coherent sheaf on a surface is reflexive if and only if it is locally free. Therefore, we state the following characterisation of equivariant reflexive sheaves on surfaces, which generalises to higher-dimensional smooth toric varieties.

\begin{proposition}[{\cite[Sec. 5.5]{Perling_2004}}]\label{prp:reflexive-sheaves}
  Let $X$ be a smooth projective toric surface and let $F$ be a torsion-free equivariant sheaf on $X$. Then $F$ is reflexive if and only if for each two-dimensional cone $\sigma$ we have $\hat{F}_\sigma(n_1, n_2) = \hat{F}_\sigma(n_1, \infty) \cap \hat{F}_{\sigma}(\infty, n_2)$, where $\hat{F}_\sigma$ is the $\sigma$-family on $U_{\sigma}$ associated to $F$.
\end{proposition}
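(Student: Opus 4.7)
The plan is to reduce to a single affine chart $U_\sigma \cong \mathbb{A}^2$ for a fixed two-dimensional $\sigma$, and to verify the claimed condition there by comparing $F$ with $j_* j^* F$, where $j : U_\sigma \setminus \{o\} \hookrightarrow U_\sigma$ is the inclusion of the complement of the torus-fixed point $o$. Both reflexivity and the combinatorial condition are local on $X$, so this reduction is harmless. On a smooth surface, reflexive is equivalent to locally free, and since $F$ is equivariant and torsion-free, the support of $F^{\vee\vee}/F$ is a torus-invariant zero-dimensional subset of $U_\sigma$, hence contained in $\{o\}$. By the Auslander--Buchsbaum formula at the regular local ring $\mathcal{O}_{U_\sigma, o}$, freeness of $F$ at $o$ is equivalent to $\mathrm{depth}_o F \geq 2$, which is in turn equivalent to the canonical map $F \to j_* j^* F$ being an isomorphism.

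The next step is a weight-by-weight computation of $j_* j^* F$. Write $V = \hat{F}_\sigma(\infty, \infty)$ for the common ambient vector space and view every $\hat{F}_\sigma(n_1, n_2)$ as a subspace of $V$. Since $U_\sigma \setminus \{o\} = D(z^{w_1}) \cup D(z^{w_2})$ with intersection the big torus $T$, the sheaf axiom identifies $H^0(U_\sigma \setminus \{o\}, F)$ with the equalizer of $F[z^{-w_1}] \oplus F[z^{-w_2}] \rightrightarrows F[z^{-w_1}, z^{-w_2}]$. In weight $(n_1, n_2)$, the module $F[z^{-w_1}]$ is the colimit $\varinjlim_k \hat{F}_\sigma(n_1 + k, n_2)$ of the horizontal inclusions of the $\sigma$-family, which stabilises at $\hat{F}_\sigma(\infty, n_2)$ by finiteness; similarly $F[z^{-w_2}]_{(n_1, n_2)} = \hat{F}_\sigma(n_1, \infty)$, and $F[z^{-w_1}, z^{-w_2}]_{(n_1, n_2)} = V$. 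Taking equalizers in each weight then yields $(j_* j^* F)_{(n_1, n_2)} = \hat{F}_\sigma(n_1, \infty) \cap \hat{F}_\sigma(\infty, n_2)$.

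Assembling the two steps: $F_{(n_1, n_2)} = \hat{F}_\sigma(n_1, n_2)$, the target $(j_* j^* F)_{(n_1, n_2)}$ is the displayed intersection, and the canonical map is in each weight the tautological inclusion. Hence $F$ is reflexive iff equality holds for every $(n_1, n_2)$ and every two-dimensional $\sigma$, which is precisely the stated criterion. I expect the main obstacle to be the careful identification of the weight spaces of the localised modules: one must verify that inverting $z^{w_1}$ turns the horizontal chain $\hat{F}_\sigma(n_1, n_2) \hookrightarrow \hat{F}_\sigma(n_1+1, n_2) \hookrightarrow \cdots$ into isomorphisms whose common value is $\hat{F}_\sigma(\infty, n_2)$, which in turn rests on the finite-generation hypothesis on the $\sigma$-family. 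Once this bookkeeping is in place, the rest of the argument is essentially formal.
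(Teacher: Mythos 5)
Your argument is correct. Note that the paper does not actually prove this proposition; it is quoted from Perling \cite{Perling_2004}, Sec.\ 5.5, so the only comparison available is with that reference. Your computation of sections over the punctured chart weight-by-weight, identifying $H^0(U_\sigma\setminus\{o\},F)$ in weight $(n_1,n_2)$ with $\hat F_\sigma(n_1,\infty)\cap\hat F_\sigma(\infty,n_2)$ via the \v{C}ech/equalizer description on $D(z^{w_1})\cup D(z^{w_2})$, is essentially the same computation that underlies the cited result; where you diverge is in how you justify the reduction ``$F$ reflexive $\iff$ $F\to j_*j^*F$ is an isomorphism on each chart'': you use the surface-specific chain reflexive $\Leftrightarrow$ locally free, Auslander--Buchsbaum, and the depth/local-cohomology sequence $0\to H^0_{\{o\}}(F)\to F\to j_*j^*F\to H^1_{\{o\}}(F)\to 0$, whereas Perling's treatment rests on the general fact that a reflexive sheaf on a normal variety is determined by its restriction to the complement of any closed subset of codimension at least two, which works in all dimensions. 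Your route buys a short, self-contained surface proof (and meshes nicely with the paper's earlier remark that reflexive $=$ locally free on a surface); the general route buys the higher-dimensional statement alluded to in the paper. Two points you should make explicit but which are only bookkeeping: the charts $U_\sigma$ for two-dimensional $\sigma$ cover $X$ because the fan is complete, so checking reflexivity there suffices (together with your observation that $F^{\vee\vee}/F$ is equivariant and zero-dimensional, hence supported at the fixed points); and the identification $\varinjlim_k\hat F_\sigma(n_1+k,n_2)=\hat F_\sigma(\infty,n_2)$ uses both finiteness of the $\sigma$-family and injectivity of the structure maps (torsion-freeness), exactly as you flagged.
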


In general, we only have the inclusion $\hat{F}_{\sigma}(n_1, n_2) \subseteq \hat{F}_\sigma(n_1, \infty) \cap \hat{F}_{\sigma}(\infty, n_2)$. Since we are dealing with finite $\sigma$-families, we also know that equality fails only in a finite number of cases. This gives an easy description of $F^{\vee\vee}$: we simply define $\hat{F}^{\vee\vee}(n_1, n_2)$ to be $\hat{F}(n_1, \infty) \cap \hat{F}(\infty, n_2)$. Then indeed $F^{\vee\vee}$ is reflexive by Prop.~\ref{prp:reflexive-sheaves} and there is a natural inclusion $F \to F^{\vee\vee}$ with a cokernel that is finite-dimensional as a vector space (implying that it is supported in dimension zero (see \cite[Prop. 2.8]{KOOL20111700})).

As a corollary of the proposition, an equivariant vector bundle on $X$ of rank $n$ is completely determined by a finite complete flag for each ray of the fan $\Delta$ associated to $X$. An equivariant line bundle has an even easier description, as a finite complete flag of $\mathbb{C}$ can be described by giving a number $m$: the flag is then given by $V_\lambda = 0$ if $\lambda < m$ and $V_\lambda = \mathbb{C}$ if $\lambda \geq m$. Thus to describe an equivariant line bundle, one needs to give an integer for each ray.

\begin{example}
  \label{ex:p2-tan-bundle}
  The tangent bundle on $\mathbb{P}^2$ has a canonical equivariant structure. On the affine chart $\Spec \mathbb{C}[\frac{X}{Z}, \frac{Y}{Z}]$, the tangent sheaf is generated by $\frac{\partial}{\partial X/Z}$ and $\frac{\partial}{\partial Y/Z}$. The action of $(s, t)$ on these generators is given by $s^{-1}$ and $t^{-1}$ respectively. Hence the $\sigma$-family on this cart can be pictured as follows. We have the $\mathbb{Z}^2$-grid. The point with coordinates $(m, n)$ corresponds to the eigenspace of $s^mt^n$. The vectors $\frac{\partial}{\partial X/Z}$ and $\frac{\partial}{\partial Y/Z}$ generate the weight spaces of weight $s^{-1}$ and $t^{-1}$ respectively. There are no nontrivial relations so if $m, n$ are nonnegative we get the space $\mathbb{C}\frac{\partial}{\partial X/Z} \oplus \mathbb{C} \frac{\partial}{\partial Y/Z} \cong \mathbb{C}^2$ and on the nodes $(-1, n)$ and $(m, -1)$ with $m$ and $n$ nonnegative we get $\mathbb{C}\frac{\partial}{\partial X/Z}$ and $\mathbb{C}\frac{\partial}{\partial Y/Z}$ respectively. One of the limiting flags is $\ldots \subseteq 0 \subseteq \mathbb{C}\frac{\partial}{\partial X/Z} \subseteq \mathbb{C}^2 \subseteq \ldots$. One can similarly calculate the other flag and also do this for the other charts. The result is always the same: a flag $\ldots \subseteq 0 \subseteq \mathbb{C} \subseteq \mathbb{C}^2 \subseteq \ldots$ where $\mathbb{C}$ has weight $-1$.
  
  We view the entire tangent bundle as a picture as a triangle with three ``strips'', see Figure \ref{fig:tang-p2}. The corners of the triangle represent the three charts of $\mathbb{P}^2$. The center triangle corresponds to the part where the weight spaces have dimension 2. The strips correspond to the part where they have dimension 1. The case described above looked exactly like such a corner: there is an area, bounded on two sides, where the weight space has dimension 2. Furthermore, there were two strips where they have dimension 1. One can also see that they satisfy condition $(\star)$, which is represented by the dotted lines.

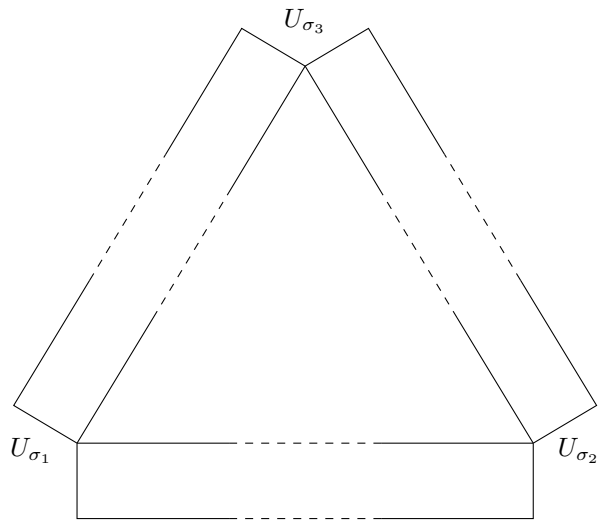
\begin{figure}
  \centering
  \begin{tikzpicture}
    \draw (0,0) -- (2,0);
    \draw [dashed] (2,0) -- (4,0);
    \draw (4,0) -- (6,0);
    \draw (0,0) -- (1,1.66);
    \draw [dashed] (1,1.66) -- (2,3.33);
    \draw (2,3.33) -- (3, 5);
    \draw (6,0) -- (5,1.66);
    \draw [dashed] (5,1.66) -- (4,3.33);
    \draw (4,3.33) -- (3,5);

    \draw (0,0) -- (0,-1) -- (2,-1);
    \draw [dashed] (2,-1) -- (4,-1);
    \draw (4,-1) -- (6,-1) -- (6,0);

    \draw (0,0) -- (-0.833, 0.5) -- (0.166, 2.16);
    \draw [dashed] (0.166, 2.16) -- (1.166, 3.83);
    \draw (1.166, 3.83) -- (2.166, 5.5) -- (3,5);

    \draw (6,0) -- (6.833, 0.5) -- (5.833, 2.16);
    \draw [dashed] (5.833, 2.16) -- (4.833, 3.83);
    \draw (4.833, 3.83) -- (3.833, 5.5) -- (3,5);

    \node[xshift = -6mm, yshift = -1mm] at (0,0) {$U_{\sigma_1}$};
    \node[xshift = 6mm, yshift = -1mm] at (6,0) {$U_{\sigma_2}$};
    \node[yshift = 6mm] at (3,5) {$U_{\sigma_3}$};
  \end{tikzpicture}
  \caption{The tangent bundle on $\mathbb{P}^2$.} \label{fig:tang-p2}
\end{figure}


\end{example}

\subsection{Chern classes of equivariant sheaves}

Using the simple description of equivariant line bundles above it is easy to describe their classes in the Chow ring. This will allow us to compute the Chern characters of arbitrary equivariant torsion-free sheaves.

\begin{lemma}
  Let $X$ be a smooth projective toric variety. Suppose there are $d$ rays on $X$, with associated divisor $D_i \in A^1(X)$, for $1 \leq i \leq d$. Let the equivariant line bundle be given by the integer $m_i$ on the $i$-th ray, as above. Then $c_1(L) = - \sum m_iD_i$. 
\end{lemma}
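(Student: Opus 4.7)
The plan is to construct an explicit global rational section of $L$ whose divisor can be read off directly from the data $(m_i)$.

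First, I would make the equivariant structure explicit in each chart. Let $\sigma$ be a maximal cone bounded by rays $\rho_{i_1}$ and $\rho_{i_2}$ with primitive generators $v_{i_1}, v_{i_2}$, and let $w_{i_1}, w_{i_2}$ denote the dual basis of $M$. Combining Proposition~\ref{prp:reflexive-sheaves} with the description of the flag attached to an integer $m_i$, one finds $\hat{L}_\sigma(n_1, n_2) = \mathbb{C}$ exactly when $n_1 \geq m_{i_1}$ and $n_2 \geq m_{i_2}$, and is zero otherwise. Hence $L|_{U_\sigma}$ is freely generated as an $\mathcal{O}$-module by a single element $g_\sigma$ sitting in torus-weight $m_{i_1}w_{i_1} + m_{i_2}w_{i_2}$.

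Second, on each such chart I would consider the weight-zero rational section $s_\sigma := z^{-m_{i_1}w_{i_1} - m_{i_2}w_{i_2}} \cdot g_\sigma$. Each $s_\sigma$ is unique up to a nonzero scalar, and the compatibility forced by condition $(\star)$ on neighbouring cones implies that, after a suitable rescaling, the $s_\sigma$ agree on overlaps and patch into a single global rational section $s$ of $L$. Equivalently, the weight-zero part of the space of rational sections of $L$ is one-dimensional, since $L$ is rationally trivial and the torus fixes the generic point.

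Third, I would read off $\operatorname{div}(s)$ ray by ray. On a chart $U_\sigma$ containing $\rho_j$, the section $s$ equals the nowhere-vanishing generator $g_\sigma$ times the character $z^{-m_{i_1}w_{i_1} - m_{i_2}w_{i_2}}$, whose order along $D_j$ equals $\langle -m_{i_1}w_{i_1} - m_{i_2}w_{i_2},\, v_j\rangle = -m_j$ by duality of the two bases. Summing over $j$ yields $\operatorname{div}(s) = -\sum_j m_j D_j$, and since $c_1(L)$ is the class of the divisor of any rational section, this gives the formula.

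The main technical point is verifying that the local weight-zero sections really do patch to a global rational section; an alternative route that avoids this bookkeeping is by additivity. The assignment $L \mapsto (m_i)$ is plainly additive under tensor product of equivariant line bundles, one checks directly that $\mathcal{O}_X(-D_j)$ corresponds to $m_i = \delta_{ij}$ via the flag description on the chart(s) containing $\rho_j$, and since $c_1$ is also additive, the general case reduces to the identity $c_1(\mathcal{O}_X(-D_j)) = -D_j$, which is the definition.
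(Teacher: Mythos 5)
Your argument is correct, and in fact the paper gives no proof of this lemma at all: it is quoted as part of the standard Klyachko/toric dictionary, so your write-up supplies the proof rather than replicating one. Your main route is the classical one and checks out: by reflexivity (Proposition~\ref{prp:reflexive-sheaves}) the $\sigma$-family of $L$ is free of rank one, generated in weight $u_\sigma = m_{i_1}w_{i_1}+m_{i_2}w_{i_2}$; the weight-zero eigensections over the open torus form a one-dimensional space, so the local sections $z^{-u_\sigma}g_\sigma$ are proportional and define a single rational section (indeed it would even suffice to take one $s_{\sigma_0}$ and note that any other $s_\sigma$ has the same divisor); and $\operatorname{ord}_{D_j}(z^{-u_\sigma})=\langle -u_\sigma, v_j\rangle=-m_j$ gives $\operatorname{div}(s)=-\sum_j m_jD_j$, which is consistent with the paper's conventions (e.g.\ it reproduces $c_1 = \sum_i D_i$ for $\det T_{\mathbb{P}^2}$, whose ray data are all $-1$). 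The only soft spot is the alternative additivity argument at the end: knowing the formula on the $\mathcal{O}_X(-D_j)$ pins it down only on the subgroup they generate, so to reduce the general case you additionally need either that the flag data determines the equivariant line bundle up to equivariant isomorphism (so data zero forces $\mathcal{O}_X$), or that every equivariant line bundle is a character twist of a product of the $\mathcal{O}_X(\pm D_j)$, together with the observation that twisting by $z^u$ shifts the data by $\langle u, v_i\rangle$ while $\sum_i \langle u, v_i\rangle D_i = 0$ in $A^1(X)$. Since your primary argument is complete, this does not affect the correctness of the proof.
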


This lemma will allow us to compute all Chern characters of all equivariant sheaves encountered in this paper, as it is generally very easy to find a resolution of an equivariant sheaf. The pictorial representation of the $\sigma$-families is very helpful here. Below we only work out a specific example but it is easy to see how to construct more general resolutions in a similar manner. The abstract theory also ensures resolutions always exist, see \cite{ChrissGinzburg_rep}. Sometimes it is also possible to construct such resolutions explicitely, see \cite[Thm. 6.1]{Perling_2004}.

\begin{example}
  Consider again the tangent bundle on $\mathbb{P}^2$ with it charts $U_{\sigma_1}$, $U_{\sigma_2}$ and $U_{\sigma_3}$. With Figure \ref{fig:tang-p2} in mind, it is easy to construct a resolution. We consider $L_1$, $L_2$ and $L_3$ which are line bundles which are each defined as the center triangle in Figure \ref{fig:tang-p2} plus one of the strips. The map $L_1 \oplus L_2 \oplus L_3$ has a kernel which is a line bundle whose picture is just $\mathbb{C}$ at each point of the center triangle. The resulting resolution is the familiar Euler sequence. The new information we are getting is that it can be upgraded to an equivariant sequence (and that one has to take different equivariant structures on each of the $\mathcal{O}(1)$'s).  
\end{example}

\subsection{Stability of equivariant bundles}

Let $X$ be a smooth toric variety with fixed polarisation $H$. We will give a criterion for an equivariant vector $E$ to be $\mu$-stable on $X$. Note that in general, one is interested in Gieseker semi-stable sheaves, since this notion of stability gives the correct moduli space. We will only apply this criterion in situations where $\mu$-stability and Gieseker stability coincide, so that the criterion below is actually a criterion for Gieseker stability. We first introduce some notation.

Fix a toric variety $X$ with polarisation $H$ and let $E$ be an equivariant vector bundle of rank $r$ over $X$.  For each ray of $\Delta$, the fan of $X$, we have a finite full flag \eqref{eq:flag}. Recall that $d$ is the number of rays of $\Delta$ and let us denote for each $1 \leq i \leq d$ the flag associated to the $i$-th ray by $\hat{E}^i(\lambda)$. We let $\delta^i_j$ be the number of spaces of dimension $j$ in the flag $\hat{E}^i$ (which is always a finite number). If $W \subseteq \mathbb{C}^r$ is any subspace, denote by $w^i_j$ the dimension of the intersection of $W$ with some $\hat{E}^i(\lambda)$ of dimension $j$. This does not depend on the choice of $\lambda$, and if no such $\lambda$ exists, the number $w^i_j$ will not be important and can have arbitrary value. Finally, to each ray $i$ corresponds a divisor $D_i$. We define $\deg D_i$ as $H.D_i$.

The next criterion is established in the course of the proof of Theorem 3.20 (for $\mu$-semi-stability) and in Prop. 4.13 (for $\mu$-stability) in \cite{KOOL20111700}.

\begin{proposition}
  \label{prp:stability-ineq}
  An equivariant vector bundle $E$ of rank $r$ is $\mu$-stable if and only if for each nontrivial subspace $0 \subsetneq W \subsetneq \mathbb{C}^r$ the following inequality holds:
  \[
    \frac{1}{\dim W} \sum_{i = 1}^d \sum_{j = 1}^{r - 1} \delta^i_j \cdot \deg D_i \cdot w^i_j < \frac{1}{n} \sum_{i = 1}^d \sum_{j = 1}^{r - 1} \delta^i_j\cdot \deg D_i\cdot j
  \]
  For $\mu$-semi-stability, replace $<$ by $\leq$ in the above inequality. 
\end{proposition}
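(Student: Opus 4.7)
The plan is to translate the slope inequality defining $\mu$-stability into combinatorial data on the flags of $E$, via the Klyachko correspondence and Proposition \ref{prp:reflexive-sheaves}.

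First I would reduce the test for $\mu$-(semi)stability to equivariant reflexive subsheaves. Given any $\mu$-destabilising subsheaf $F \subsetneq E$, the saturation of $F$ has the same rank and at least as large a degree, hence still destabilises; its double dual is reflexive with the same first Chern class (the cokernel is zero-dimensional) and the same slope. Moreover the Harder--Narasimhan filtration is preserved by the torus action, so its maximally destabilising term is automatically equivariant. By Proposition \ref{prp:reflexive-sheaves} together with gluing condition $(\star)$, every equivariant reflexive subsheaf $F \subset E$ is specified by a subspace $W \subseteq \mathbb{C}^r$ of the common limit space, with flags $\hat F^i(\lambda) = W \cap \hat E^i(\lambda)$; the rank of $F$ is $\dim W$, and $w^i_j$ is precisely the dimension of $W$ in a dimension-$j$ step of the flag on ray $i$.

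Next I would compute the two slopes combinatorially. Using the lemma preceding the proposition, the equivariant determinant of $E$ has weight on ray $i$ equal to $\sum_\lambda \lambda \cdot (\dim \hat E^i(\lambda) - \dim \hat E^i(\lambda - 1))$, with the analogous formula for $F$ applied to the subflags. An Abel summation over each flag rewrites this in terms of the multiplicities $\delta^i_j$, producing contributions of the form $\sum_j \delta^i_j \cdot j$ for $E$ and $\sum_j \delta^i_j \cdot w^i_j$ for $F$, modulo a constant coming from the choice of equivariant lift. Intersecting with $H$ and dividing by the respective ranks, the inequality $\mu(F) < \mu(E)$ (resp. $\leq$) rearranges into the stated inequality. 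For the converse direction, given $W$ that violates the inequality, the associated equivariant reflexive sheaf $F_W$ has exactly the slope computed above and hence $\mu$-destabilises $E$.

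The main obstacle is the normalisation. The weights of an equivariant structure on $E$ are only defined up to tensoring with a torus character, so each weight shifts by an integer depending linearly on the ray. I would verify that this shift contributes to $c_1(E)$ proportionally to $r$ and to $c_1(F)$ proportionally to $\dim W$, times a single class that depends only on the character, so that the ambiguity drops out of the slope inequality. This cancellation is what makes the final criterion intrinsic, depending only on the multiplicities $\delta^i_j$ and the incidence numbers $w^i_j$; once it is in hand, the rest of the argument is an algebraic rearrangement.
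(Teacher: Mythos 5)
Note first that the paper does not actually prove this proposition: it is quoted from Kool's work (Theorem 3.20 and Proposition 4.13 of \cite{KOOL20111700}), so the comparison is with that standard argument, and your outline reproduces it essentially verbatim: reduce to the maximal destabilising (hence torus-invariant, hence equivariant) saturated subsheaf, identify it combinatorially, and compare slopes via Klyachko's first Chern class formula; the bookkeeping does work out, since writing $m_i(\det E)=rB^i-\sum_j j\,\delta^i_j$ and $m_i(\det F_W)=sB^i-\sum_j w^i_j\,\delta^i_j$ by Abel summation makes the $B^i$-terms (and any shift by a character) cancel after dividing by the ranks, leaving exactly the stated inequality. One correction: your claim that \emph{every} equivariant reflexive subsheaf $F\subset E$ has flags $\hat F^i(\lambda)=W\cap\hat E^i(\lambda)$ is false as stated (e.g. an equivariant line subbundle twisted down is reflexive but its flags jump strictly later); this intersection description characterises the \emph{saturated} equivariant subsheaves, equivalently it is the subsheaf of maximal slope among equivariant subsheaves with generic fibre $W$. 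Since your reduction step hands you a saturated subsheaf, the argument goes through once the claim is restricted accordingly.
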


\begin{remark}
  If $X$ is a surface, the criterion holds for arbitrary equivariant torsion-free sheaves, instead of just vector bundles. This can be seen as follows. A sheaf $E$ is $\mu$-stable if and only if $E^{\vee\vee}$ is. The latter sheaf is locally free, so the above criterion applies. But the criterion only depends on the limiting flags of $E^{\vee\vee}$, which are the same as the limiting flags of $E$.
\end{remark}

\begin{example}
  The tangent bundle on projective space $\mathbb{P}^2$ is stable. For a sheaf of rank 2 we only have to deal with $\delta^1 = \delta^1_1$, $\delta^2 = \delta^2_1$ and $\delta^3 = \delta^3_1$. We see from the example above that in this case all three numbers are equal to one. The stability inequalities from Proposition \ref{prp:stability-ineq} translate to the triangle inequalities: $\delta_1 < \delta_2 + \delta_3$, $\delta_2 < \delta_3 + \delta_1$ and $\delta_3 < \delta_1 + \delta_2$. These are satisfied, so the bundle is stable.
\end{example}

\subsection{Equivariant K-theory}

In this paragraph we describe a localistation formula for K-theory. Let $X$ be a smooth projective variety on which a torus $T$ acts. Let $X^T$ be the fixed point locus with inclusion $\iota : X^T \to X$. Then $X^T$ is smooth as well. Let $N$ be the normal bundle of $X^T$ in $X$. Denote by $K_T^0(X)$ and $K_T^0(X^T)$ their equivariant K-theories. Then there is a localisation theorem, which was first proven in \cite{10.1215/S0012-7094-92-06817-7}. The following formulation can be found in \cite[Sec. 2.3]{okounkov_k_theory}.

\begin{theorem}
  \label{thm:k-theoretic-localisation}
  There exists finitely many characters $\mu_i$ of $T$ such that the pushforward map $\iota_* : K_T^0(X^T) \to K_T^0(X)$ becomes an isomorphism after localising at $1 - \mu_i$ for all $i$. Furthermore, in this case the class $\bigwedge^\bullet N^\vee \in K^0_T(X^T)$ becomes invertible and we have that
  \[
    F = \iota_*\left(\frac{\iota^*F}{\bigwedge^\bullet N^\vee}\right)
  \]
  for all $F$ in the localised equivariant K-theory of $X$.
\end{theorem}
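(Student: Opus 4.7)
The plan is to follow the standard Thomason-style argument for equivariant K-theoretic localization. The starting point is the self-intersection formula for the regular closed immersion $\iota : X^T \hookrightarrow X$. Since $X^T$ is smooth with normal bundle $N$, the Koszul resolution of $\mathcal{O}_{X^T}$ in a tubular neighborhood yields, for every $G \in K_T^0(X^T)$,
\[
  \iota^* \iota_* G \;=\; G \cdot \bigwedge^\bullet N^\vee.
\]
This identity should be established first, as everything else is built on top of it.

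Next I would identify the characters $\mu_i$. By definition of $X^T$, the $T$-action on $N$ has no trivial weights, so $N$ decomposes as a finite direct sum $\bigoplus_\alpha N_\alpha$ of weight bundles indexed by nontrivial characters $\alpha$ of $T$; take the $\mu_i$'s to be precisely these weights. A splitting-principle argument then shows that each $\bigwedge^\bullet N_\alpha^\vee$ becomes a unit in $K_T^0(X^T)$ once $1-\alpha$ is inverted: writing $N_\alpha$ formally as a sum of line bundles twisted by $\alpha$, one finds that $\bigwedge^\bullet N_\alpha^\vee$ equals $(1-\alpha^{-1})^{\mathrm{rk}\, N_\alpha}$ up to a unipotent correction from the non-equivariant Chern roots. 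Consequently $\bigwedge^\bullet N^\vee$ is invertible after localizing at the $1-\mu_i$, and combined with the self-intersection formula this makes $F \mapsto \iota^* F / \bigwedge^\bullet N^\vee$ a one-sided inverse of $\iota_*$.

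The remaining task, which is the main obstacle, is to show that $\iota_*$ is also surjective after localization --- equivalently, that the localized equivariant K-theory of the open complement $U := X \setminus X^T$ vanishes. The standard device is a $T$-equivariant dévissage. One stratifies $U$ by locally closed $T$-invariant subschemes, each having stabilizer a proper subtorus $T' \subsetneq T$. On such a stratum $Y$, some weight $\mu_i$ is nontrivial on $T$ but restricts trivially to $T'$, so that $1-\mu_i$ annihilates $K_T^0(Y)$ (which is essentially $K_{T/T'}^0$ of a free $T/T'$-quotient). Hence the K-theory of each stratum vanishes after localization; combining this with the localization long exact sequence for open-closed decompositions and inducting on the dimension of strata shows that the localized $K_T^0(U)$ vanishes, so by excision $\iota_*$ is surjective after localization.

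The inversion formula is now formal. Given $F$ in the localized $K_T^0(X)$, surjectivity writes $F = \iota_*(G)$ for some $G \in K_T^0(X^T)$; applying $\iota^*$ and invoking the self-intersection formula yields $\iota^* F = G \cdot \bigwedge^\bullet N^\vee$, and dividing by the now-invertible class $\bigwedge^\bullet N^\vee$ recovers $G = \iota^* F / \bigwedge^\bullet N^\vee$, which substituted back into $F = \iota_*(G)$ gives the desired formula.
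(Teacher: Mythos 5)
The paper offers no proof of this statement to compare against: it is quoted as Thomason's concentration/localization theorem, with the formulation taken from Okounkov's lectures, so the only benchmark is the standard argument in those sources. Your sketch reconstructs exactly that argument, and its architecture is right: the equivariant self-intersection formula $\iota^*\iota_*G = G\cdot\bigwedge^\bullet N^\vee$ for the regular embedding of the smooth fixed locus, invertibility of $\bigwedge^\bullet N^\vee$ after inverting $1-\mu$ for the (nontrivial) normal weights $\mu$ (using that rank-zero classes on the smooth projective $X^T$ are nilpotent), vanishing of the localized theory of the complement $U$ by stratification and noetherian induction, and the purely formal deduction of the inversion formula at the end.

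Two steps are asserted rather than proved, and as written one of them would not go through. First, you fix the $\mu_i$ to be \emph{exactly} the weights of $N$ and then claim that every stratum of $U$ with stabilizer $T'$ admits some $\mu_i$ restricting trivially to $T'$. That is not automatic. For $X$ smooth projective one can argue it via the Borel fixed point theorem (the closure of an orbit through a point with stabilizer $T'$ is a toric variety containing a fixed point $x$, and its tangent directions at $x$ give a nontrivial weight of $N_x$ killed by $T'$), but you give no such argument; alternatively, since the theorem only asserts the \emph{existence} of finitely many characters, the cleaner fix is to let the finite set grow during the noetherian induction, adjoining for each stratum a character trivial on its stabilizer alongside the weights of $N$ needed for invertibility of $\bigwedge^\bullet N^\vee$. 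Second, the open--closed localization sequence you invoke does not exist for $K^0$ of vector bundles on the (possibly singular) strata and on $U$; the d\'evissage must be run in equivariant $G$-theory of coherent sheaves, with smoothness of $X$ and $X^T$ used only at the end to identify $K^0_T$ with $G$-theory there. Relatedly, on a stratum with constant stabilizer $T'$ and quotient $Z$, the element $1-\mu$ does not literally annihilate the equivariant theory: under $G_T(Y)\cong G_{T'}(Z)$ it acts as $1-[L_\mu]$ for the associated line bundle $L_\mu$ on $Z$, which is nilpotent (rank zero) rather than zero --- still sufficient for vanishing after localization, but the mechanism should be stated correctly. With these repairs your proposal is the standard proof from the cited literature.
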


If $X$ is a toric variety, then $X^T$ is a disjoint union of reduced points, one for each two-dimensional cone in the fan of $X$. The K-theory of a point is the ring of representations. When $X$ is two-dimensional and $T = \mathbb{G}_m^ 2$, this ring is $\mathbb{Z}[s, t]$. Hence $K^0_T(X^T) = \prod_{p \in X^T}\mathbb{Z}[s,t]$.

In this case, Theorem \ref{thm:k-theoretic-localisation} takes the following form. Since $X^T$ is zero-dimensional, $N = T_X|_{X^T}$. For each point $p \in X^T$, we have that $\bigwedge^\bullet N^\vee|_{\{p\}} = \bigwedge^\bullet \Omega_X|_{\{p\}}$. If we write $\Omega_X|_{\{p\}} = \chi_{p, 1} + \chi_{p,2}$ in the representation ring, then $\bigwedge^\bullet \Omega_X|_{\{p\}} = (1 - \chi_{p,1})(1 - \chi_{p,2})$.

We will be interested in computing the Euler characteristic of a sheaf using localisation. For this, we push the equality of Theorem \ref{thm:k-theoretic-localisation} to a point, which gives us:

\begin{equation}
  \label{eq:euler-char-loc}
  \chi(X, F) = \sum_{p \in X^T} \frac{F|_{\{p\}}}{(1 - \chi_{p,1})(1 - \chi_{p,2})}
\end{equation}

\subsection{Equivariant cohomology}

Here we recall Atiyah-Bott localisation, which we use to evaluate integrals on the moduli space $M$. The theory is similar to the equivariant K-theory described above. Let $X$ be a smooth projective variety on which a torus $T$ acts. Again, this implies that the fixed point locus $X^T$ is smooth. Let $\iota : X^ T \to X$ be the inclusion and $N$ be the normal bundle. We have equivariant cohomology groups $H^\bullet_T(X, \mathbb{Q})$ and $H^\bullet_T(X^T, \mathbb{Q})$. Then we have the following theorem:

\begin{theorem}[{\cite[Thm. 2]{Edidin1995LocalizationIE}}]
  \label{thm:a-b-localisation}
  For any equivariant cohomology class $\alpha \in H^\bullet_T(X, \mathbb{Q})$, we have the equality
  \[
    \int_X \alpha = \int_{X^T} \frac{\iota^* \alpha}{e(N)}.
  \]
  Here $e(N)$ denotes the Euler class of the normal sheaf $N$.
\end{theorem}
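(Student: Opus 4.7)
The plan is to work in the localization of $H^\bullet_T(X, \mathbb{Q})$ at the multiplicative set $S$ generated by the nonzero characters in $H^\bullet_T(\mathrm{pt}, \mathbb{Q}) = \mathrm{Sym}^\bullet(\mathfrak{t}^\vee) \otimes \mathbb{Q}$. The first observation is that the restriction of $e(N)$ to any connected component of $X^T$ is a product of weights of the $T$-action on the normal bundle, and these weights are nonzero characters by the very definition of the fixed locus. Hence $e(N)$ becomes invertible after localization, and the expression $\iota^*\alpha/e(N)$ makes sense in the localized equivariant cohomology of $X^T$.

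Next I would establish that the pushforward $\iota_* \colon H^\bullet_T(X^T, \mathbb{Q})_S \to H^\bullet_T(X, \mathbb{Q})_S$ is an isomorphism. Using the long exact sequence of the pair $(X, X \setminus X^T)$ together with the Thom isomorphism $H^{\bullet - 2c}_T(X^T, \mathbb{Q}) \cong H^\bullet_T(X, X \setminus X^T; \mathbb{Q})$, where $c$ is the codimension of $X^T$, this reduces to showing that $H^\bullet_T(X \setminus X^T, \mathbb{Q})_S = 0$. The idea is to stratify $X \setminus X^T$ by $T$-orbit types: each orbit has a stabilizer of positive codimension in $T$, and so by standard arguments its contribution to equivariant cohomology is annihilated by some character belonging to $S$, hence vanishes upon localization.

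Once the localization isomorphism is in place, I would combine it with the self-intersection formula $\iota^* \iota_* \beta = e(N) \cdot \beta$ for $\beta \in H^\bullet_T(X^T, \mathbb{Q})$, which is a consequence of the excess intersection formula applied to the regular embedding $\iota$. Indeed, any $\alpha \in H^\bullet_T(X, \mathbb{Q})_S$ can be written uniquely as $\alpha = \iota_*(\beta)$ after localization, so $\iota^*\alpha = e(N) \cdot \beta$ and therefore $\beta = \iota^*\alpha/e(N)$. Applying the proper pushforward $\pi_*$ to a point and using $\pi_* \iota_* = (\pi|_{X^T})_*$ gives $\int_X \alpha = \int_{X^T} \iota^*\alpha/e(N)$; both sides a priori live in the localized ring, but the left-hand side lies in the non-localized equivariant cohomology of a point, so no denominators survive.

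The main obstacle is establishing the vanishing $H^\bullet_T(X \setminus X^T, \mathbb{Q})_S = 0$. A clean way is to use the Borel construction and analyze the fibration $ET \times_T (X \setminus X^T) \to BT$, combined with a Leray spectral sequence argument based on the orbit-type stratification; alternatively one can proceed by Noetherian induction on the stratification, applying Mayer-Vietoris at each step. All remaining steps are essentially formal consequences of the excess intersection formula and the behavior of proper pushforward in equivariant cohomology.
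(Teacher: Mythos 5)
The paper offers no proof of this statement: it is quoted verbatim from Edidin--Graham (the reference \cite{Edidin1995LocalizationIE} cited next to the theorem), so there is no internal argument to compare against. Your proposal is the standard Atiyah--Bott localization argument in Borel equivariant cohomology, and it is essentially correct: invert the nonzero characters, show $H^\bullet_T(X\setminus X^T,\mathbb{Q})$ dies after localization using the orbit-type stratification (an algebraic torus action on a variety has finitely many stabilizer types, so the Noetherian induction or spectral-sequence argument closes), identify $\iota_*$ with the composite of the Thom isomorphism and the map from the cohomology of the pair $(X, X\setminus X^T)$, and conclude with the self-intersection formula $\iota^*\iota_*\beta = e(N)\beta$ together with functoriality of proper pushforward, $\pi_*\iota_* = (\pi|_{X^T})_*$, which uses that $X$ is projective. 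This is a genuinely different route from the cited source, which works in equivariant Chow groups and proves the statement by intersection-theoretic methods valid over more general bases; for a smooth projective complex variety the two versions agree under the cycle class map, and your topological argument is the more classical and self-contained one for the paper's purposes. Two small imprecisions to tidy up: the restriction of $e(N)$ to a component of $X^T$ is a product of weights only when that component is an isolated point; in general it is a product of terms of the form (nonzero weight) plus positive-degree classes on $X^T$, and invertibility after localization relies on the positive-degree part being nilpotent because $X^T$ is finite-dimensional. Likewise, when $X^T$ has components of different codimensions the Thom isomorphism shifts degree differently on each component, so both it and the invertibility of $e(N)$ should be stated componentwise; neither point affects the validity of the argument.
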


In the cases where we apply this theorem, $X^T$ is isolated. Then we have that $N = T_X|_{X^T}$. Then the integral to the right becomes a finite sum, which is easier to evaluate.

This theorem can also be used to compute non-equivariant integrals. For this, we use that there is a forgetful map $H^\bullet_T(X, \mathbb{Q}) \to H^\bullet(X, \mathbb{Q})$. To compute the integral of a cohomology class $\alpha$ in $H^\bullet(X, \mathbb{Q})$, if we have a lift $\alpha' \in H^\bullet_T(X, \mathbb{Q})$ then we can use Theorem~\ref{thm:a-b-localisation} to evaluate the integral $\int_X \alpha' \in H^\bullet_T(\{*\}, \mathbb{Q})$. One can then apply the forgetful map to obtain the integral of $\alpha$. 

\section{Verification of the conjecture in special cases}
\label{sec:results-toric-vari}

Here we describe how to check Conjecture \ref{sec:strong-conjecture} on a specific smooth projective toric surface $X$ with a polarisation $H$ in special cases. Fix a rank $r$, and a Chern class $c = 1 + c_1 + c_2$. For $X$ toric, this determines the determinant. Let $M = M^H_X(r, c_1, c_2)$ be the moduli space, as before. We assume that $\gcd(r, \ch_1\cdot H) = 1$. This ensures that $M$ is fine and that Gieseker semi-stability, Gieseker stability and $\mu$-stability all coincide. 

\begin{lemma}
  \label{lmm:smoothness}
  With these assumptions, $\Ext^2(E, E) = 0$ for any stable sheaf. Hence $M$ is smooth of the expected dimension.
\end{lemma}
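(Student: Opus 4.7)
The plan is to apply Serre duality together with $\mu$-stability to eliminate $\Ext^2(E,E)$, using the key geometric fact that toric surfaces have an effective anti-canonical divisor. Concretely, I would begin by noting that Serre duality gives an isomorphism
\[
\Ext^2(E,E) \;\cong\; \Hom(E, E\otimes \omega_X)^\vee,
\]
so it suffices to rule out nonzero morphisms $E \to E \otimes \omega_X$.

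Next I would invoke the toric hypothesis. As recalled earlier in the paper, $-\omega_X = \sum_{i=1}^d D_i$ is a nonzero effective divisor on any smooth projective toric surface, and since $H$ is ample we get $H \cdot K_X < 0$. Twisting by a line bundle preserves $\mu$-stability, so $E \otimes \omega_X$ is $\mu$-stable of slope
\[
\mu(E\otimes \omega_X) \;=\; \mu(E) + \tfrac{H\cdot K_X}{1} \;<\; \mu(E),
\]
(slopes computed with respect to $H$). A standard fact about $\mu$-semistable sheaves says that if $F_1$ is $\mu$-semistable, $F_2$ is $\mu$-semistable, and $\mu(F_1) > \mu(F_2)$, then $\Hom(F_1,F_2) = 0$: any nonzero map would have image that is simultaneously a quotient of $F_1$ (hence of slope $\geq \mu(F_1)$) and a subsheaf of $F_2$ (hence of slope $\leq \mu(F_2)$). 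Applying this to $F_1 = E$ and $F_2 = E \otimes \omega_X$ kills $\Hom(E, E\otimes \omega_X)$, hence $\Ext^2(E,E) = 0$.

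Once the obstruction space vanishes, smoothness of $M$ at $[E]$ follows from the standard deformation theory of sheaves: infinitesimal deformations of $E$ are parametrised by $\Ext^1(E,E)_0$ and their obstructions lie in $\Ext^2(E,E)_0 \subseteq \Ext^2(E,E) = 0$. For the dimension statement, I would use that on a toric surface $H^i(X,\mathcal{O}_X) = 0$ for $i>0$, so the trace decomposition gives $\Ext^2(E,E) = \Ext^2(E,E)_0$, and Hirzebruch--Riemann--Roch computes
\[
\dim_{[E]} M \;=\; \ext^1(E,E)_0 \;=\; 1 - \chi(E,E) + \ext^2(E,E)_0,
\]
which equals the expected dimension since the last term vanishes and $\Hom(E,E) = \mathbb{C}$ by stability.

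There is essentially no obstacle here: the only subtle point is verifying the slope inequality and the standard $\Hom$-vanishing, both of which are routine. The lemma is really a packaging of the well-known fact that on a del Pezzo-like (i.e.\ anti-canonically positive) surface, stable sheaves have vanishing higher obstructions, specialised to the toric case where $-K_X$ is automatically effective.
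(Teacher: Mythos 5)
Your proof is correct and follows essentially the same route as the paper: Serre duality reduces $\Ext^2(E,E)$ to $\Hom(E,E\otimes\omega_X)$, which vanishes because $-\omega_X$ is effective on a toric surface (so $\mu(E\otimes\omega_X)<\mu(E)$) and stable sheaves admit no nonzero maps that decrease slope. The extra details you supply on deformation theory and the trace-free Ext groups are fine and consistent with how the paper uses the lemma later.
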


\begin{proof}
  By Serre duality, $\Ext^2(E, E) = \Hom(E, E\otimes \omega_X)^\vee$. Since $\omega_X$ is anti-effective for $X$ toric, $\deg(\omega_X) = H.\omega_X < 0$. Then $\mu(\omega_X) = \deg(\omega_X) < 0$, so $\mu(E \otimes \omega_X) = \mu(E) + \mu(\omega_X) < \mu(E)$. Also, $E\otimes \omega_X$ is still $\mu$-stable. By Schur's lemma for $\mu$-stable sheaves, the only morphism $E \to E \otimes \omega_X$ is the zero morphism.
\end{proof}

Denote the torus of $X$ by $T$. The action of $T$ lifts to $M$, see \cite{KOOL20111700}. On the level of points, the action of $t \in T$ sends $[E] \in M$ to $[\lambda_t^*E]$, where $\lambda_t: X \to X$ is the multiplication by $t$. In order to evaluate the integral of the conjecture, we use localisation, Theorem~\ref{thm:a-b-localisation}. Therefore, we need to know the fixed point locus $M^T$. This locus consists of the stable sheaves in $M$ admitting an equivariant structure, see \cite{KOOL20111700}. In the cases we consider, $M^T$ is isolated, so the formula of Thm.~\ref{thm:a-b-localisation} becomes much easier. For now we assume that we have an explicit description of all the sheaves in $M^T$, later we will address the problem of finding all such sheaves, which is a difficult problem in general.

The (twisted) universal sheaf $\mathcal{E}$ on $M$ becomes equivariant for the action of $T$ described above. This ensures that the classes \eqref{eq:kernel-classes} admit lifts in equivariant cohomology. Also, for any toric variety $X$, $H^\bullet(X, \mathbb{Q}) \cong A^\bullet(X, \mathbb{Q})$ by \cite[Sec. 5.2]{fulton93_toric} and the latter is generated by classes of invariant subschemes (see the description in Sec. \ref{sec:invariant-sheaves}). Therefore, all cohomology classes of $X$ also admit equivariant lifts. We conclude that all the classes $\ch_i(\gamma)$ admit equivariant lifts. Thus we can indeed apply Theorem~\ref{thm:a-b-localisation} (see the remarks after the Theorem).

\subsection{The computation}

We will explain how to evaluate the equivariant integral $\int_M P$, where $P$ is any polynomial in the $\ch_i(\gamma)$, for $\gamma$ an equivariant cohomology class on $X$. One can set $P = \mathcal{L}_kD$ to verify the conjecture. In particular, we will see that the only input we need is $T_{X, p}$ for $p \in X^T$ as two-dimensional representation of $T$ and $E_{p, q}$, the equivariant K-theory class of the equivariant\footnote{The choice of equivariant structure does not matter for the computation.} sheaf $E_q$ corresponding to $q \in M^T$ at $p \in X^T$. This is a finite amount of discrete data. The author has used the freely available computer algebra system SageMath \cite{sage_9.3} to run the computation.

First we compute $\ch_i(\gamma)$ for any $i$ and $\gamma$ as elements of $H^\bullet_T(M^T, \mathbb{Q})$. Since $X^T \times M^T$ is zero-dimensional, the map $\ch : K_T(X^T \times M^T) \to H_T^\bullet(X^T \times M^T, \mathbb{Q})$ is an isomorphism, so the component of $\ch(\mathcal{E})$ at the point $(p, q) \in X^T \times M^T$ is simply $E_{p, q}$. We can then compute the classes \eqref{eq:kernel-classes} by using a formal power series as mentioned in the introduction. Since $M$ is smooth of the expected dimension $d = 2rc_2 - (r - 1)c_1^2 - (r^2-1)\chi(X, \mathcal{O}_X)$, we can safely ignore the terms in the power series of degree higher than $2d$, so this becomes a finite sum. Now given $\gamma$ with component $\gamma_p$ at $p \in X^T$, localisation allows us to compute:
\[
  \ch_i(\gamma) = \sum_{p \in X^T} \frac{\gamma_p \cdot \ch_i\left(-E_{p,q} \otimes \det(E_{p, q})^{-1/r}\right)}{\text{e}(T_{X, p})}.
\]

This allows us to compute any polynomial in the $\ch_i(\gamma)$, simply by addition and multiplication. If such a polynomial has component $P_q$ at $q \in M^T$, localisation tells us that we want to consider
\[
  \int_M P = \sum_{q \in {M^ T}} \frac{ P_q }{ \text{e}(T_{M, q}) }.
\]
This reduces the problem to computing $T_{M, q}$ as representation of $T$. It is well-known that $T_{M, q} \cong \Ext^1(E, E)_0$, the trace-free Ext-group, and this isomorphism respects the $T$-structure. The codomain of the trace map is $H^1(X, \mathcal{O}_X)$, which is zero for a toric surface. Hence $\Ext^1(E, E)_0 = \Ext^1(E,E)$. Since $\Ext^0(E, E) = 1$ (as representation) and $\Ext^2(E, E) = 0$ by Lemma \ref{lmm:smoothness}, $\Ext^1(E, E) = 1 - \chi(E, E)$ as elements of the representation ring. The Euler class is a K-theoretic invariant, which can be computed by the K-theoretic localisation formula \eqref{eq:euler-char-loc}. This gives us
\[
  \chi(E, E) = \sum_{p \in X^T} \frac{H^0(\{p\}, E_{p, q} \otimes E_{p, q}^\vee)}{(1 - \chi_{p,1})(1 - \chi_{p,2})}.
\]
where $\chi_p^1$ and $\chi_p^2$ are as in \eqref{eq:euler-char-loc}. This completes the calculation. It is evident from the computation that this can be done by a computer, given the data specified before.

\begin{example}
  \label{ex:run-ex-tansp}
  Recall that on $X = \mathbb{F}_0 = \mathbb{P}^1\times \mathbb{P}^1$, we have that $H^2(X, \mathbb{Q})$ is generated by $F$, $Z$, where $F$ is a fibre of the projection $\pi_2 : \mathbb{P}^1 \times \mathbb{P}^1 \to \mathbb{P}^1$ and $Z$ is the image of a section of $\pi_2$. The fan of $X$ has four cones of dimension two, which are each given by a quadrant of $\mathbb{Z}^2$. We number the associated fixed points by letting $X_1$ be the fixed point corresponding to the upper-right quadrant and then proceeding clockwise to define $X_2$, $X_3$ and $X_4$. Then the tangent spaces at these points have representations $s^{-1} + t^{-1}$, $s^{-1} + t$, $s + t$ and $s + t^{-1}$ respectively, where $\mathbb{Z}[s, t]$ is the representation ring of the two-dimensional torus.

  Consider the case where $r = 2$, $\Delta = F+Z$, $c_2 = 2$ and with polarisation $H = 2F + 5Z$. Then the fixed point locus consists of four equivariant sheaves. In the following table we put what the representation of such a sheaf $F$ is when restricted to a fixed point $X_i$.

  \begin{center}
    \begin{tabularx}{.8\textwidth}{
    |>{\centering\arraybackslash}X
    |>{\centering\arraybackslash}X
    |>{\centering\arraybackslash}X
    |>{\centering\arraybackslash}X |}
  \hline $F|_{X_1}$ & $F|_{X_2}$ & $F|_{X_3}$ & $F|_{X_4}$ \\ \noalign{\hrule height 1.5pt}
  $1 + s^{-1}t$& $t^2 + s^{-1}$& $t^2 + 1$& $t + 1$\\ \hline
 $t + t^{-1}$& $t + 1$& $s + t$& $st + t^{-1}$\\ \hline
      $t + 1$& $t^2 + 1$& $t^2 + s$& $st + 1$\\ \hline
      $s^{-1}t + t^{-1}$& $t + s^{-1}$& $t + 1$& $t + t^{-1}$\\ \hline
    \end{tabularx}
  \end{center}

  Of course, one has to make a choice of equivariant structure. We explain a way of doing so when explaining how to find all vector bundles.
  
  As the reader can see, all the elements in the table are honest representations, no minuses occur. This is a sign that all of the sheaves are vector bundles, which is indeed the case. Using K-theoretic localisation \eqref{eq:euler-char-loc}, we can compute that the tangent spaces to $M$ at these sheaves are respectively $t^{-1} + s^{-1} + s^{-1}t^{-1}$, $st + s + t$, $s + st^{-1} + t^{-1}$ and $t + ts^{-1} + s^{-1}$. Their Euler classes are therefore $-s^2t - st^2$, $s^2t + st^2$, $-s^2t + st^2$ and $s^2t - st^2$.   
\end{example}

\subsection{Verifying the conjecture}

Next we explain how to verify the conjecture given the algorithm above and the data $E_{p, q}$ and $T_{X, p}$. Fix a basis $\gamma_k$ of $H^\bullet(X, \mathbb{Q})$, where each $\gamma_k$ is of pure degree. It suffices to verify the conjecture for $D = \prod_j \ch_{i_j}(\gamma_{k_j})$ a monomial, where each $\ch_{i_j}(\gamma_{k_j})$ has positive degree. Indeed, if one of the $\ch_{i_j}(\gamma_{k_j})$ has nonnegative degree we can pull them out of $\mathcal{L}_k$ by Lemma \ref{lmm:low-term-lemma}. Hence we can restrict to this case.

But there are essentially a finite amount of such $D$, since the conjecture is also automatic if $\deg D > \dim M$ by Corollary \ref{sec:large-k}. But one can generate a list of all $D$ with a fixed degree. Hence we should, for each $k$, generate a list of all $D$ with degree $\dim M - k$ and check the conjecture for $\mathcal{L}_kD$.

To perform this check for a given monomial $D$, choose equivariant lifts $\beta_k$ for the $\gamma_k$. Then we perform the computation described above. The result lives in $H^\bullet_T(\{*\}, \mathbb{Q}) = \mathbb{Q}[s, t]$, but we can plug in $s = t = 0$ to obtain a rational number. This is the desired integral $\int_M \mathcal{L}_kD$.

\begin{remark}
  We can in fact restrict to the case where $D$ is a monomial $\prod_j \ch_{i_j}(\gamma_{k_j})$ with $i_j \neq 1$ for all $j$. Indeed, if $i_j = 1$ then $\gamma_{k_j}$ must be a multiple of $\mathbf{p}$, otherwise the degree of $\ch_{i_j}(\gamma_{k_j})$ is not positive. But then we can use Corollary \ref{cor:eliminiating-ch-1} to verify the conjecture.
\end{remark}

\begin{remark}
  Without any modification, this algorithm is not very efficient in terms of running time. It is advisible to fix the $\beta_k$ and to compute $\ch_i(\beta_k)$ in advance, storing the result. Similarly, one might compute the elements $T_k$ in advance. This greatly speeds up the computation when one wants to verify the conjecture for all $D$. 
\end{remark}

\begin{example}
  We continue with Example \ref{ex:run-ex-tansp}. We first choose equivariant lifts of the cohomology classes $F$ and $Z$. It suffices to give these lifts at the fixed points $X_i$. For a single point, we have $H^\bullet_T(\{*\}, \mathbb{Q}) \cong K^0_T(\{*\}) \otimes \mathbb{Q}= \mathbb{Q}[s, t]$. It turns out that there is an equivariant class lifting $F$ which restricts to $s^{-1}$ at $X_1$ and $X_2$ and zero otherwise, and there is one lifting $Z$ which restricts to $t^{-1}$ at $X_1$ and $X_4$ and zero otherwise. Then the point class $\mathbf{p}$ is simply the product of these classes. 

  Now we can compute all classes $\ch_i(\gamma)$, but to verify the conjecture, one also needs the operator $\mathcal{L}_k$. For this, we also need a Künneth decomposition of the diagonal $X \to X \times X$. This can be computed to be $\mathbf{p} \otimes 1 + F \otimes Z + Z \otimes F + 1 \otimes \mathbf{p}$, for example by the method of \cite[Sec. 2.1.6]{eisenbud16}.

  For example, to verify that the conjecture holds for $\mathcal{L}_2\ch_2(Z)$, one needs to integrate
  \[R_2\ch_2(Z) = 6\ch_4(Z), \qquad T_2\ch_2(Z)\quad \text{and} \quad S_2\ch_2(Z) = 3(\ch_2(\mathbf{p})\ch_2(Z) + \ch_3(\mathbf{p})\ch_1(Z)) \]
  We refrain from writing down all intermediate steps, and simply give the result: $-\frac18$, $-\frac{1}{4}$ and $\frac38$. These sum up to zero, as required.
\end{example}

\subsection{Finding all equivariant sheaves}

Above we assumed we already had access to all equivariant stable sheaves with certain fixed invariants. Now we explain how to find these. We assume that we have a fixed polarisation $H$. Later we will explain how to find all possible $H$, so that one can apply this procedure to each one. The procedure can be divided into two steps: first find all equivariant vector bundles, which we then use to find all equivariant torsion-free sheaves. We first explain how to reduce to vector bundles.

\subsubsection{Reducing to vector bundles}

Note that for any $\mu$-stable sheaf $E$ with fixed invariants $r$, $c_1$ and $c_2$, the canonical map $E \to E^{\vee\vee}$ is an embedding of $E$ into a $\mu$-stable vector bundle of rank $r$, $c_1(E^{\vee\vee}) = c_1$ and $c_2(E^{\vee\vee}) \leq c_2$. But we also have the lower bound $c_2(E^{\vee\vee}) \geq\frac{(r - 1)c_1^2}{2r}$ given by the Bogomolov inequality. Thus, if we are able to find a list of all stable equivariant vector bundles $F$ satisfying $\rk F = r$, $c_1(F) = c_1$ and $\frac{(r - 1)c_1^2}{2r} \leq c_2(F) \leq c_2$, then $E^{\vee\vee}$ must be in this list. Furthermore, if we have such an equivariant vector bundle $F$ and any equivariant torsion-free sheaf $E$ with $E^{\vee\vee} = F$, then $E$ is $\mu$-stable. Thus it suffices to, given $F$, find all equivariant torsion-free $E$ with the right invariants such that $E^{\vee\vee} = F$.

Consider the following operation on $F$. We consider a maximal cone $\sigma$ and the $\sigma$-family $\hat{F}$ of $F$ on $U_{\sigma}$. We consider a point $(m, n)$ such that $\hat{F}(m-1, n) + \hat{F}(m, n-1)$ is a proper subspace of $\hat{F}(m, n)$. We then replace $\hat{F}(m, n)$ by a subspace of codimension one, which contains $\hat{F}(m-1, n)+\hat{F}(m, n-1)$. This gives a sub-equivariant sheaf $F'$ of $F$ which has its $c_2$ increased by one. Since $(F')^{\vee\vee} = F^{\vee\vee}$, $F'$ is still $\mu$-stable and hence Gieseker stable. We have constructed $F'$ in such a way that there is a short exact sequence
\begin{equation}
  \label{eq:torsion-free-exact-sequence}
  0 \to F' \to F \to \chi \cdot \mathcal{O}_{p} \to 0.
\end{equation}
Here $p$ is the fixed points of $X$ corresponding to the maximal cone $\sigma$ and $\chi$ is the character of $F(m, n)$.

All equivariant sheaves $E$ with $E^{\vee\vee} = F$ are obtained by applying this operation sufficiently many times so that the resulting sheaf has the desired $c_2$. If the difference in $c_2$'s is only one, this can easily be seen by considering the quotient. Otherwise one can use induction.

\begin{example}
  On $\mathbb{P}^2$ there are 1, 3 and 3 equivariant vector bundles of rank 2 with $c_1 = 1$ and $c_2$ equal to respectively 1, 2 and 3. There are six ways to degenerate a stable equivariant bundle with $c_2 = 2$ to a stable equivariant sheaf with $c_2 = 3$. The single equivariant bundle with $c_1 = 1$ degenerates to a total of 27 stable equivariant sheaves with $c_2 = 3$. Hence the moduli space has 48 isolated fixed points, the highest of the examples in this paper. See Appendix \ref{sec:data-obtained-from} for a complete list.
  
  We make this more concrete in a specific example. There is a stable equivariant vector bundle $E$ with $c_2 = 2$ whose K-theory classes at the fixed points of $\mathbb{P}^2$ are $(st^{-1} + ts^{-1}, st^{-1} + t,  ts^{-1} + s)$. To perform the operation described above, we can consider the first chart and the weight space of $st^{-1}$. This space is one-dimensional, so there is no choice in picking a codimension-one subspace. We then obtain an equivariant stable torsion-free sheaf $E'$ which is not locally free.

  In this paper we are mainly interested in the K-theory class of $E'$. For this, we can use the exact sequence \eqref{eq:torsion-free-exact-sequence}. It is not difficult to see that the K-theory class of $\chi \cdot \mathcal{O}_p$ is $\chi \cdot (1 - s - t + st)$. So we can obtain the K-theory class of $E'$ by replacing $st^{-1}$ by $st^{-1}(s + t - st)$ in the K-theory class of $E$. This computation also works if we had chosen the weight space $ts^{-1}$. It also works if we had chosen one of the other two charts, except that the K-theory class of $\chi \cdot \mathcal{O}_p$ is then $\chi \cdot(1 - s^{-1} - ts^{-1} + ts^{-2})$ or $\chi \cdot (1 - t^{-1} - st^{-1} + st^{-2})$ respectively.
\end{example}

\begin{example}
  \begin{figure}
  \centering
  \begin{tikzpicture}[scale = .5]
        \draw(0,0) -- (2,0);
    \draw(0,-1) -- (2,-1);
    \draw [dashed] (2,0) -- (4,0);
    \draw [dashed] (2,-1) -- (4,-1);
    \draw (4,0) -- (6,0);
    \draw (4,-1) -- (6,-1);
    \draw(0,0) -- (0,-1);
    \draw(6,0) -- (6,-1);

    \draw(0,0) -- (0,2);
    \draw[dashed] (0,2) -- (0,4);
    \draw(0,4) -- (0,6);
    \draw(-1,0) -- (-1,2);
    \draw[dashed] (-1,2) -- (-1,4);
    \draw(-1,4) -- (-1,6);
    \draw(-1,0) -- (0,0);
    \draw(-1,6) -- (0,6);

    \draw(0,7) -- (2,7);
    \draw(0,6) -- (2,6);
    \draw [dashed] (2,7) -- (4,7);
    \draw [dashed] (2,6) -- (4,6);
    \draw (4,7) -- (6,7);
    \draw (4,6) -- (6,6);
    \draw(0,7) -- (0,6);
    \draw(6,7) -- (6,6);

    \draw(7,0) -- (7,2);
    \draw[dashed] (7,2) -- (7,4);
    \draw(7,4) -- (7,6);
    \draw(6,0) -- (6,2);
    \draw[dashed] (6,2) -- (6,4);
    \draw(6,4) -- (6,6);
    \draw(6,0) -- (7,0);
    \draw(6,6) -- (7,6);

  \end{tikzpicture}
  \qquad \quad
\begin{tikzpicture}[scale = .5]
        \draw(0,0) -- (2,0);
    \draw(0,-1) -- (2,-1);
    \draw [dashed] (2,0) -- (4,0);
    \draw [dashed] (2,-1) -- (4,-1);
    \draw (4,0) -- (6,0);
    \draw (4,-1) -- (6,-1);
    \draw(0,0) -- (0,-1);
    \draw(6,0) -- (6,-1);

    \draw(0,0) -- (0,2);
    \draw[dashed] (0,2) -- (0,4);
    \draw(0,4) -- (0,6);
    \draw(-1,0) -- (-1,2);
    \draw[dashed] (-1,2) -- (-1,4);
    \draw(-1,4) -- (-1,6);
    \draw(-1,0) -- (0,0);
    \draw(-1,6) -- (0,6);

    \draw(0,7) -- (2,7);
    \draw(0,6) -- (2,6);
    \draw [dashed] (2,7) -- (4,7);
    \draw [dashed] (2,6) -- (4,6);
    \draw (4,7) -- (6,7);
    \draw (4,6) -- (6,6);
    \draw(0,7) -- (0,6);
    \draw(6,7) -- (6,6);

    \draw(7,0) -- (7,2);
    \draw[dashed] (7,2) -- (7,4);
    \draw(7,4) -- (7,5.5);
    \draw(6,0) -- (6,2);
    \draw[dashed] (6,2) -- (6,4);
    \draw(6,4) -- (6,6);
    \draw(6,0) -- (7,0);
    \draw(6,6) -- (6.5,6);

    \draw(6.5,6) -- (6.5,5.5);
    \draw(6.5,5.5) -- (7,5.5);

  \end{tikzpicture}
\qquad\quad
  \begin{tikzpicture}[scale = .5]
        \draw(0,0) -- (2,0);
    \draw(0,-1) -- (2,-1);
    \draw [dashed] (2,0) -- (4,0);
    \draw [dashed] (2,-1) -- (4,-1);
    \draw (4,0) -- (6,0);
    \draw (4,-1) -- (6,-1);
    \draw(0,0) -- (0,-1);
    \draw(6,0) -- (6,-1);

    \draw(0,0) -- (0,2);
    \draw[dashed] (0,2) -- (0,4);
    \draw(0,4) -- (0,6);
    \draw(-1,0) -- (-1,2);
    \draw[dashed] (-1,2) -- (-1,4);
    \draw(-1,4) -- (-1,6);
    \draw(-1,0) -- (0,0);
    \draw(-1,6) -- (0,6);

    \draw(0,7) -- (2,7);
    \draw(0,6) -- (2,6);
    \draw [dashed] (2,7) -- (4,7);
    \draw [dashed] (2,6) -- (4,6);
    \draw (4,7) -- (7,7);
    \draw (4,6) -- (6,6);
    \draw(0,7) -- (0,6);

    \draw(7,0) -- (7,2);
    \draw[dashed] (7,2) -- (7,4);
    \draw(7,4) -- (7,7);
    \draw(6,0) -- (6,2);
    \draw[dashed] (6,2) -- (6,4);
    \draw(6,4) -- (6,6);
    \draw(6,0) -- (7,0);

  \end{tikzpicture}
  \caption{Three equivariant sheaves on a Hirzebruch surface.} \label{fig:hirz-three-bundles}
\end{figure}
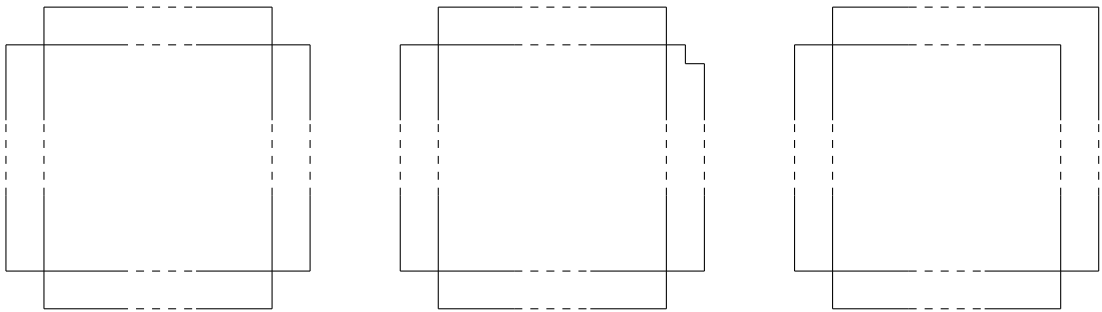


  We show how to represent the equivariant torsion-free sheaves pictorially on a Hirzebruch surface. Recall from Example \ref{ex:p2-tan-bundle} that one can represent a rank 2 vector bundle on $\mathbb{P}^2$ by a triangle with strips attached. For a Hirzebruch surface the picture is the same, except that we now have a square instead of a triangle (essentially because on a Hirzebruch surface we have four toric charts). Each of the corners represents one of the charts and each side represents a ray of the cone (i.e., the gluing condition $(\star)$). The first picture in Figure \ref{fig:hirz-three-bundles} represents an equivariant vector bundle on $\mathbb{F}_a$. Consider the chart in the upper-right corner. In this chart, there are two places where we can apply the operation described above, namely in the upper-right corners of the two strips. Suppose we choose the right strip. Then we lower the dimension of the weight space in the upper-right corner from one to zero. Hence we obtain the second picture in Figure \ref{fig:hirz-three-bundles}. The small square we removed represents the single weight space we removed.
\end{example}

\subsection{Finding all equivariant vector bundles}

We describe how to find all equivariant stable vector bundles. We focus on the rank 2 case on a Hirzebruch surface $\mathbb{F}_a$, but the other cases are similar. The fan of a Hirzebruch surface has four two-dimensional cones and four rays and can be found in \cite[Sec. 1.1]{fulton93_toric}. We number the rays in clockwise order, starting with the rightmost ray. In this case an equivariant vector bundle $E$ is described by four finite full flags of $\mathbb{C}^2$, one for each ray. Thus, for each ray $\rho_i$ we have a sequence which looks like the following:
\[
  \ldots \subseteq 0  \subseteq \ldots \subseteq V^i \subseteq \ldots \subseteq V^i \subseteq \ldots \subseteq \mathbb{C}^2 \subseteq \ldots
\]
Such a flag depends on three choices: the one-dimensional subspace $V^i$, a number $\delta_i$ which is the number of occurences of $V^i$ and a number $A^i$ which is the weight where $\mathbb{C}^2$ first occurs. However, different choices may lead to the same vector bundle, since we have the freedom of tensoring with a character from $T = \mathbb{G}^2_m$ and we can apply a linear automorphism of $\mathbb{C}^2$ to obtain different $V^i$. The action of a character of $T$ only changes the $A^i$. To be precise, the action of $s^nt^m$ changes $(A^1, A^2, A^3, A^4)$ to $(A^1+n, A^2 - m, A^3 - n, A^4 + m)$. So we can eliminate the ambiguity of the choice of character by fixing (for example) $A^2 = A^3 = 0$. The ambiguity in the choice of $V^i$ is more subtle. For the moment, let us assume that all the $V^i$ are distinct. The computation does not depend on the specific choice of $V^i$. Later we will deal with the other cases. Pictorially, this means we are in the situation corresponding to the first picture in Figure \ref{fig:hirz-three-bundles}.

We will determine all possible numerical invariants. These are the solutions to a certain set of equalities and inequalities, determined by fixing $c_1$ and $c_2$, and the inequalities coming from stability (Prop~\ref{prp:stability-ineq}). By considering a resolution, we can find explicit formulas for the first and second Chern character of $E$. The formulas also depends on $a$ (recall we work on $\mathbb{F}_a$).
\begin{equation*}
  \ch_1(E)  = (\delta_1 + \delta_3 + a\delta_2 - 2A^1)F + (\delta_2 + \delta_4 - 2A^4)Z
\end{equation*}

Hence if we have fixed $\Delta = fF + zZ$, then we get the equations $f = \delta_1 + \delta_3 + a\delta_2 - 2A^1$ and $z = \delta_2 + \delta_4 - 2A^4$. The formula for the second Chern character is

\begin{equation}
  \label{eq:chern-2-hirzebruch}
  \ch_2(E) = \frac14 \left(
  a(\delta_2^2 - \delta_4^2 - z^2) - 2(\delta_1\delta_2 + \delta_2 + \delta_3 + \delta_3 \delta_4 + \delta_4 \delta_1) + 2fz \right)
\end{equation}

This formula requires the $V_i$ to be distinct.

Next we need to consider the stability inequalities from Prop. \ref{prp:stability-ineq}. To compute the degrees, we need the intersection numbers $\deg_1 = \deg_3 = H.F$, $\deg_2 = H.(aF + Z)$ and $\deg_4 = H.Z$. The proposition tells us that we need to consider all possible one-dimensional subspaces $W$, but in fact it suffices to consider $W = V^i$ for some $V^i$. Indeed, otherwise the inequality is trivial as $w^i_j$ is always zero in that case. Then the stability inequalities become
\begin{equation}
  \label{eq:stability-hirz}
  2\delta_i \deg_i < \sum_{j = 1}^4 \delta_j \deg_j
\end{equation}
for each $i$. Here we also use that the $V^i$ are distinct. Finally, we need the trivial inequalities $\delta_i \geq 0$. We then find all solutions for this system. This is elementary, but still difficult. We made use of the automated theorem prover Z3 \cite{de_Moura_2008} to find all solutions.

Now we deal with the problem of choice of the vector spaces $V^i$. It turns out that in the numbers we obtained above, at least one $\delta_i$ was always zero. Thus we only had to choose three distinct vector spaces. But then there is no choice at all: if we choose different subspaces $W^i$ then there is a linear automorphism of $\mathbb{C}^2$ mapping $V^i$ to $W^i$. So the resulting equivariant stable vector bundle does not depend on the choice of $V^i$. This also implies that the fixed point locus $M^T$ is isolated, since it only depends on discrete numerical data. If all $\delta_i$ are positive, we would have had a higher-dimensional component of $M^T$. However, in this paper, it turns out that this behaviour does not occur and we only have to deal with cases where $\dim M^T = 0$.

\subsection{Degenerate cases}

In the previous computation we assumed that the $V^i$ were all distinct. Here we explain how to find the vector bundles if that were not true. The stability inequality Prop. \ref{prp:stability-ineq} imply the following: there must always exist at least three $i$ for which $\delta_i > 0$ and for which the $V^i$ are distinct. Then there are the following cases to consider: two adjacent $V^i$ are equal (e.g. $V^1 = V^4$) or two opposing $V^i$ are equal (e.g. $V^1 = V^3$). We explain the changes.

The formula~(\ref{eq:chern-2-hirzebruch}) for the second Chern character changes if two adjacent $V^i$ are equal. Explicitely, if $V^1 = V^4$, one should add a term $\delta_1\delta_4$ to the formula, and similar if other adjacent $V_i$ are equal. This is similar for the other cases. The other thing that changes is the stability conditions, both in the case of adjacent and opposing coincidence of the $V^i$. Indeed, suppose $V^1 = V^3$, then taking $W = V^1 = V^3$ in Prop.~\ref{prp:stability-ineq} gives the more restrictive inequality
\begin{equation}
  \label{eq:stability-hirz-degen}
  2(\deg_1\delta_1 + \deg_3 \delta_3) = \sum_{j = 1}^4 \deg_j\delta_j.
\end{equation}
One needs to add this inequality to the system we already had in \eqref{eq:stability-hirz}. We can solve this system in a similar way to the other case. Because there are only three different vector spaces $V^i$ from the start, we can argue in the same way as before to ensure that $M^T$ is isolated.

Pictorially, the condition that $V^1 = V^4$ corresponds to the third picture in Figure \ref{fig:hirz-three-bundles} where we have filled in the corner. Indeed, if we consider this chart, then recall that for a vector bundle $E$ we should have $\hat{E}(m, n) = \hat{E}(m, \infty) \cap \hat{E}(\infty, n)$. If $(m, n)$ lies in this corner, this intersection is $V^1 \cap V^4$. In a generic situation, $V^1 \neq V^4$, so that the intersection is zero. But in this special case, the intersection equals $V^1 = V^4$, so it is one-dimensional.

The picture also explains why the second Chern character changes. The little square we added by setting $V^1 = V^4$ has size $\delta_1 \times \delta_4$. It turns out that the Chern character only depends on the dimension of the weight spaces, see \cite[Prop. 3.6]{kool09_euler_charac_modul_spaces_torsion}. Changing a single weight space by one dimension changes the second Chern character by one and the other Chern characters stay the same. Therefore, a square of dimension $\delta_1 \times \delta_4$ represents a change of $\delta_1\delta_4$ in the second Chern character. Note that in the situation $V^1 = V^3$, we do not add such a small square and consequently the second Chern character does not change.

\subsection{Finding bundles on \texorpdfstring{$\mathbb{P}^2$}{P\^{}2}}

Finding stable equivariant vector bundles for $\mathbb{P}^2$ is similar to what we have just described. Recall that the fan for $\mathbb{P}^2$ has three rays. Thus the rank 2 case becomes even easier, since there are only three vector spaces $V^i$ from the start, which makes it easier to argue that $M^T$ is isolated. However, the rank 3 and 4 cases are much more involved. We briefly discuss the rank 3 case. Now, our flags look like
\[
  \ldots \subseteq 0 \subseteq V^i \subseteq \ldots \subseteq V^i \subseteq W^i \subseteq \ldots \subseteq W^i \subseteq \mathbb{C}^3 \subseteq \ldots
\]
where $V^i$ is one-dimensional and $W^i$ is two-dimensional. Note that we require three numbers to describe such a flag, with $A^i$ as before and $\delta_i$ and $\epsilon_i$ for the number of occurences of $V^i$ and $W^i$, respectively. Having two spaces in our flag gives a great additional complexity in the ambiguity of choices of these subspaces. Also, classifying the possible coincidences is more complicated. This was done by Klyachko in a preprint \cite{Klyachko_preprint}, but see also \cite[Sec. 4.2]{kool09_euler_charac_modul_spaces_torsion}. Recall that one- and two-dimensional subspaces of $\mathbb{C}^3$ are the points and lines in $\mathbb{P}^ 2$. Thus we may picture the $V^i$ and $W^i$ as three lines with three points on them (indicating that the $V^i$ are subspaces of the $W^i$). Then stability ensures that the possible configurations are the ones pictured in Figure~\ref{fig:coincidences}.

The resulting inequalities are also much harder than in the Hirzebruch case. We applied a mix of calculations by hand and the solver Z3 to find all solutions. In the end, we can argue as before that since a sufficient number of $\delta_i$ and $\epsilon_i$ vanish, the isomorphism class of the obtained vector bundle $E^i$ does not depend on the choice of subspaces $V^i$ and $W^i$, as any two choices can be related by a suitable automorphism of $\mathbb{C}^3$. This implies that $M^T$ is isolated, as before.

\begin{figure}
  \centering
  \begin{tikzpicture}[scale=0.12]
    \draw (0,0) -- (25,25);
    \draw (0,5) -- (40,5);
    \draw(15,25)-- (40,0);

    \node at (12.5,12.5) [circle, fill, inner sep=1.5pt] {};
    \node at (27.5,12.5) [circle, fill, inner sep=1.5pt] {};
    \node at (20,5) [circle, fill, inner sep=1.5pt] {};
  \end{tikzpicture}\hspace{30pt}
  \begin{tikzpicture}[scale=0.15]
    \draw (-15,0) -- (15,0);
    \draw (10,10) -- (-10,-10);
    \draw(10,-10)-- (-10,10);

    \node at (7.5,7.5) [circle, fill, inner sep=1.5pt] {};
    \node at (-7.5,7.5) [circle, fill, inner sep=1.5pt] {};
    \node at (9.5,0) [circle, fill, inner sep=1.5pt] {};
  \end{tikzpicture}\\[10pt]
  \begin{tikzpicture}[scale=0.12]
    \draw (0,0) -- (25,25);
    \draw (0,5) -- (40,5);
    \draw(15,25)-- (40,0);

    \node at (5,5) [circle, fill, inner sep=1.5pt] {};
    \node at (27.5,12.5) [circle, fill, inner sep=1.5pt] {};
    \node at (20,5) [circle, fill, inner sep=1.5pt] {};
  \end{tikzpicture}\hspace{30pt}
  \begin{tikzpicture}[scale=0.10]
    \draw (-13.5,-13.5) -- (25,25);
    \draw (0,5) -- (40,5);
    \draw(15,25)-- (40,0);

    \draw[dashed] (-20,-15)--(40,15);
    
    \node at (-10,-10) [circle, fill, inner sep=1.5pt] {};
    \node at (30,10) [circle, fill, inner sep=1.5pt] {};
    \node at (20,5) [circle, fill, inner sep=1.5pt] {};
  \end{tikzpicture}

  \caption{Possible coincidences}
  \label{fig:coincidences}
\end{figure}
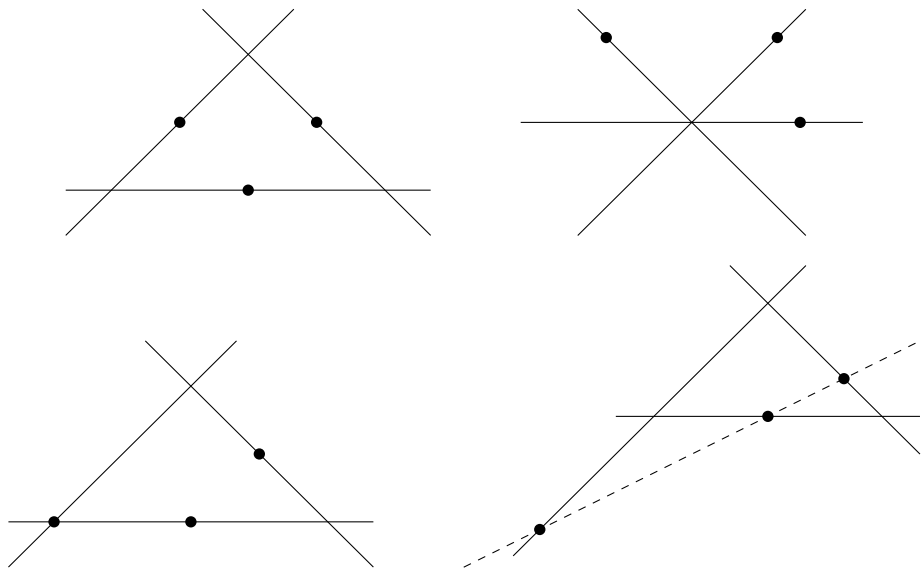


The classification of bundles of rank 4 on $\mathbb{P}^2$ is incomplete, as we found it already too difficult to classify the possible coincidences of the subspaces in the flags in this case. We found 13 bundles by looping over the $\delta_i$ in the non-degenrate case where there are no special coincidences. We are nevertheless confident that there are no more bundles because the Atiyah-Bott calculation gave numbers as a result. Usually when one forgets a bundle, the answer is a not a number, but a more complicated expression in the equivariant parameters.

\subsection{Dependence on the polarisation}

Both $\mu$-stability and Gieseker stability depend on the choice of a polarisation. We are interested in verifying the conjecture for all possible choices of polarisation. For $\mathbb{P}^2$ this is not an issue, since the only polarisations are $\mathcal{O}(n)$ for $n > 0$ and the stability condition does not change if you scale a polarisation by a positive number.

But for Hirzebruch surfaces this is an issue. Note that $\Pic(\mathbb{F}_a) = \Num(\mathbb{F}_a) = \mathbb{Z}F \oplus \mathbb{Z}Z$ with intersections $F.F = 0$, $F.Z = 1$ and $Z.Z = -a$. The ample cone is the set of line bundles $L$ such that $L.F > 0$ and $L.Z > 0$. $\Num(\mathbb{F}_a)$ contains a finite set of hyperplanes, called \emph{walls}, which divide the ample cone into connected components, called \emph{chambers}. This material can be found in \cite[Sec. 4.C]{huybrechts10} and was developed by Z. Qin \cite{Qin1993EquivalenceCO}. If $F$ is a rank 2 sheaf $F$ with $c_1(F)$ not divisible by 2, then $F$ is $\mu$-stable with respect to a polarisation $H$ iff it is $\mu$-stable with respect to any polarisation in the chamber of $H$. Hence the moduli space $M = M^H_X(r, \Delta, c_2)$ does not change if we pick another polarisation in the same chamber. We do not need to consider polarisations $H$ that lie on walls: if the moduli space $M$ changes if we pick another polarisation $H'$ that lies close to $H$ and is in a chamber, then this change happens because of the existence of proper $\mu$-semi-stable sheaves with respect to $H$. But we assumed no such sheaves existed.

Now we explain how to find all walls. For a general surface $X$ and fixed $r$, $\Delta$ and $c_2$, consider all the $\xi \in \Num(X)$ such that
\[ \frac{((r^2 - 1)\Delta^2 - 2rc_2)r^2}{4} \leq \xi^2 < 0.\]
Then the walls are given by the hyperplanes $\{\xi\}^\perp$. Furthermore, if $X$ is a Hirzebruch surface, there are only a finite number of such $\xi$. To verify Conjecture~\ref{sec:strong-conjecture} it suffices to choose a polarisation from each of the chambers and apply the algorithm described in the beginning of this section.

\begin{example}
  In our examples, there are two cases where the choice of polarisation is relevant, both for $\mathbb{F}_0$ and $r = 2$. In the first case, we took $\Delta = F+Z$ and $c_2 = 2$ and in the second case we had $\Delta = F$ had and $c_2 =2$. In both cases there are several chambers, but only two possible variants of $M^T$. The second example is the most interesting. There are two equivariant bundles which are stable independent of the polarisation. But for one choice, one obtains for additional equivariant vector bundles, while for the other, one obtains 20 equivariant torsion-free sheaves, only four of which are vector bundles! Thus even the topological Euler characteristic depends on the choice of polarisation (as is well known). All the sheaves are listed in Appendix \ref{sec:data-obtained-from}.
\end{example}

\subsection{A deformation argument}

Using the algorithm above, we have verified Conjecture \ref{sec:strong-conjecture} explicitely for the Hirzebruch surfaces $\mathbb{F}_a$ with $a = 0, 1, 2$, $\Delta = F$, $Z$ and $H+Z$ and $c_2$ the minimal choice such that the Bogomolov inequality is satisfied. We will now use a deformation argument to deduce Conjecture \ref{sec:strong-conjecture} for any $a$ with $\Delta$ and $H$ such that $\Delta.H$ is odd and $c_2$ again the minimal choice. In fact the case $a = 2$ is redundant, as it also follows from the deformation argument.

\begin{proposition}
  \label{prp:family-f-a}
  Let $n$ be a natural number. There is a smooth family of surfaces $\mathcal{F}$ over $\mathbb{A}^1$ with fibre $\mathbb{F}_{2n}$ above zero and fibre $\mathbb{F}_0$ over all other closed points.

  Similarly, there is a family $\mathbb{F}$ with fibre $\mathcal{F}_{2n+1}$ above zero and fibre $\mathbb{F}_1$ over all other points.
\end{proposition}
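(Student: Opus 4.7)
The plan is to construct each family as the projectivization of a rank-two vector bundle on $\mathbb{A}^1 \times \mathbb{P}^1$ that interpolates between a split bundle on $\{0\} \times \mathbb{P}^1$ and a balanced bundle on every other fiber. The key observation is that $\mathbb{P}(\mathcal{O}(\alpha) \oplus \mathcal{O}(\beta)) \cong \mathbb{F}_{|\alpha-\beta|}$, so in particular $\mathbb{P}(\mathcal{O}(-n) \oplus \mathcal{O}(n)) \cong \mathbb{F}_{2n}$ and $\mathbb{P}(\mathcal{O} \oplus \mathcal{O}) \cong \mathbb{F}_0$, while $\mathbb{P}(\mathcal{O}(-n) \oplus \mathcal{O}(n+1)) \cong \mathbb{F}_{2n+1}$ and $\mathbb{P}(\mathcal{O} \oplus \mathcal{O}(1)) \cong \mathbb{F}_1$.

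First I would focus on the even case. I would fix a class $\xi \in \Ext^1_{\mathbb{P}^1}(\mathcal{O}(n), \mathcal{O}(-n)) = H^1(\mathbb{P}^1, \mathcal{O}(-2n))$ whose associated extension $E_\xi$ is the balanced bundle $\mathcal{O} \oplus \mathcal{O}$. By Grothendieck every rank-two bundle on $\mathbb{P}^1$ with $c_1 = 0$ has the form $\mathcal{O}(k) \oplus \mathcal{O}(-k)$ for some $k \geq 0$; a semicontinuity argument using, e.g., $h^0(E(-1))$ shows that the locus of unbalanced classes in $\mathbb{P}(H^1(\mathbb{P}^1, \mathcal{O}(-2n)))$ is proper closed (for $n=1$ the whole $\mathbb{P}(\Ext^1)$ is a point and the unique nontrivial extension is already balanced). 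Using Künneth together with $H^1(\mathbb{A}^1, \mathcal{O}) = 0$, one has
\[
\Ext^1_{\mathbb{A}^1 \times \mathbb{P}^1}(\pi_2^* \mathcal{O}(n), \pi_2^* \mathcal{O}(-n)) \cong \mathbb{C}[t] \otimes H^1(\mathbb{P}^1, \mathcal{O}(-2n)),
\]
and the element $t \otimes \xi$ gives an extension
\[
0 \to \pi_2^* \mathcal{O}(-n) \to \mathcal{E} \to \pi_2^* \mathcal{O}(n) \to 0
\]
of rank-two bundles on $\mathbb{A}^1 \times \mathbb{P}^1$ whose restriction to $\{t\} \times \mathbb{P}^1$ has extension class $t\xi$.

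Then I would set $\mathcal{F} := \mathbb{P}_{\mathbb{A}^1 \times \mathbb{P}^1}(\mathcal{E})$ with structure morphism to $\mathbb{A}^1$ through the first projection. The total space $\mathcal{F}$ is smooth as a projective bundle over the smooth variety $\mathbb{A}^1 \times \mathbb{P}^1$, and its morphism to $\mathbb{A}^1$ is smooth as the composition of smooth morphisms. At $t = 0$ the extension splits and the fiber is $\mathbb{P}(\mathcal{O}(-n) \oplus \mathcal{O}(n)) = \mathbb{F}_{2n}$; at every $t \neq 0$ the extension class $t\xi$ is a nonzero scalar multiple of $\xi$, so the underlying bundle is isomorphic to $E_\xi = \mathcal{O} \oplus \mathcal{O}$ and the fiber is $\mathbb{F}_0$. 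The odd case is entirely analogous, starting from extensions $0 \to \mathcal{O}(-n) \to E \to \mathcal{O}(n+1) \to 0$ parametrized by $H^1(\mathbb{P}^1, \mathcal{O}(-2n-1))$; a generic nontrivial extension splits as $\mathcal{O} \oplus \mathcal{O}(1)$, giving $\mathbb{F}_1$, while the split one gives $\mathbb{F}_{2n+1}$.

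The main obstacle is the genericity statement that some $\xi$ realises the balanced splitting type on $\mathbb{P}^1$. This is a classical fact about jumping of splitting type for rank-two bundles on $\mathbb{P}^1$, and can be handled either by a dimension count on the Harder–Narasimhan stratification of $\mathbb{P}(\Ext^1)$, or by a direct semicontinuity argument: a nontrivial extension satisfies $\Hom(E, \mathcal{O}(-n)) = 0$ (since the connecting map is cup product with $\xi$), which forces the splitting type $\mathcal{O}(k) \oplus \mathcal{O}(-k)$ to have $k < n$, and the open stratum $k = 0$ is then non-empty by a Serre-duality computation of the relevant jump loci. Once this is in hand, the construction of $\mathcal{F}$ and the verification of its fibers are formal.
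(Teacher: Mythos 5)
Correct, and essentially the same approach as the paper. The paper's proof scales the class of the explicit extension $0 \to \mathcal{O}(-2n) \to \mathcal{O}(-n)\oplus\mathcal{O}(-n) \to \mathcal{O} \to 0$ (your extension twisted by $\mathcal{O}(-n)$, with the balanced middle term produced concretely from functions vanishing to order $n$ at $0$ and $\infty$, which replaces your genericity/semicontinuity step) along the line through it in $\Ext^1(\mathcal{O},\mathcal{O}(-2n))$, citing Lange for the $\mathbb{A}^1$-flat interpolating family, and then projectivises exactly as you do.
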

\begin{proof}
  We work on $\mathbb{P}^1$. Consider the subsheaf $L_1$ of $\mathcal{O}$ of functions that have a zero at 0 of order at least $n$. Similarly, we consider the subsheaf $L_2$ of functions that have a zero at $\infty$ of order at least $n$. The sum of the inclusions $L_1 \oplus L_2 \to \mathcal{O}$ is a surjection whose kernel consists of the functions vanishing at both zero and $\infty$ of order at least $n$. Of course, $L_1 \cong L_2 \cong \mathcal{O}(-n)$ and the kernel is $\mathcal{O}(-2n)$. Hence this gives a short exact sequence
  \[
    0 \to \mathcal{O}(-2n) \to \mathcal{O}(-n) \oplus \mathcal{O}(-n) \to \mathcal{O} \to 0.
  \]
  Let $\mathbb{A}^1$ be the line in $\Ext^1(\mathcal{O}, \mathcal{O}(-2n))$ through zero and the above extension. Then there is a $\mathbb{A}^1$-flat family $\mathcal{E}$ on $\mathbb{P}^1 \times \mathbb{A}^1$, which is $\mathcal{O}(-n) \oplus \mathcal{O}(-n)$ over every nonzero fibre and $\mathcal{O}(-2n) \oplus \mathcal{O}$ over zero \cite[Rem. 3.5]{LANGE1983101}. Let $\mathcal{F}$ be the projectivisation of $\mathcal{E}$. Then $\mathcal{F}$ satisfies the conditions of the lemma.

  For the odd case a similar argument works.
\end{proof}

\begin{lemma}
  \label{lmm:extend-line-bundles}
  In the previous proposition, for every line bundle $L$ on $\mathbb{F}_{2n}$ (resp. $\mathbb{F}_{2n+1}$), there is an $\mathbb{A}^1$-flat family of line bundles $\mathcal{L}$ on $\mathcal{F}$ such that $\mathcal{L}_0 = L$. 
\end{lemma}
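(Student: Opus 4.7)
The plan is to use the fact that $\mathcal{F}$ is the projectivization of the rank $2$ bundle $\mathcal{E}$ on $\mathbb{P}^1 \times \mathbb{A}^1$ constructed in Proposition \ref{prp:family-f-a}, and read off its Picard group directly. Since $\operatorname{Pic}(\mathbb{A}^1) = 0$ and pullback along the projection $\mathbb{P}^1 \times \mathbb{A}^1 \to \mathbb{P}^1$ is an isomorphism on Picard groups, we have $\operatorname{Pic}(\mathbb{P}^1 \times \mathbb{A}^1) = \mathbb{Z}$. The standard formula for the Picard group of a projective bundle therefore gives
\[
  \operatorname{Pic}(\mathcal{F}) \;=\; \mathbb{Z}\bigl[\pi^*\mathcal{O}_{\mathbb{P}^1}(1)\bigr] \;\oplus\; \mathbb{Z}\bigl[\mathcal{O}_{\mathbb{P}(\mathcal{E})}(1)\bigr],
\]
where $\pi : \mathcal{F} \to \mathbb{P}^1 \times \mathbb{A}^1$ is the structure map of the projective bundle.

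Next I would check that the restriction of these two generators to the central fiber already produces a $\mathbb{Z}$-basis of $\operatorname{Pic}(\mathbb{F}_{2n}) = \mathbb{Z}F \oplus \mathbb{Z}Z$ (and similarly $\operatorname{Pic}(\mathbb{F}_{2n+1})$ in the odd case). Indeed, $\pi^*\mathcal{O}_{\mathbb{P}^1}(1)$ restricts to the fiber class $F$, while for the central $\mathcal{E}_0 = \mathcal{O}(-2n) \oplus \mathcal{O}$ the tautological bundle $\mathcal{O}_{\mathbb{P}(\mathcal{E}_0)}(1)$ corresponds to one of the standard generators of $\operatorname{Pic}(\mathbb{F}_{2n})$ (namely the class of the section $Z$, up to an explicit multiple of $F$). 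Thus the restriction map $\operatorname{Pic}(\mathcal{F}) \to \operatorname{Pic}(\mathcal{F}_0)$ is surjective.

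Given $L = aF + bZ$, we then simply set
\[
  \mathcal{L} \;=\; \pi^*\mathcal{O}_{\mathbb{P}^1}(a) \otimes \mathcal{O}_{\mathbb{P}(\mathcal{E})}(b)
\]
(twisted if necessary by a fixed line bundle from $\mathbb{P}^1$ to reconcile the basis identifications in the previous step), so that $\mathcal{L}_0 \cong L$. Flatness over $\mathbb{A}^1$ is automatic since $\mathcal{L}$ is a line bundle on the smooth scheme $\mathcal{F}$, hence locally free and in particular flat over the base. The odd case is identical after replacing $\mathcal{E}$ by the analogous extension.

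The only delicate point—and the one I would check carefully—is the identification at the central fiber: namely, that $\mathcal{O}_{\mathbb{P}(\mathcal{E}_0)}(1)$ restricted to $\mathcal{F}_0 = \mathbb{F}_{2n}$ really is (up to the known correction by a multiple of $F$) the section class $Z$ with self-intersection $-2n$, so that together with $F$ it forms a $\mathbb{Z}$-basis rather than a sublattice of finite index. This is standard for Hirzebruch surfaces viewed as projectivizations, but must be pinned down to conclude surjectivity of restriction on the nose.
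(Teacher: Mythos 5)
Your argument is correct, but it is not the route the paper takes. The paper reduces to the two generators of $\Pic(\mathbb{F}_{2n})$ and extends them as explicit divisors: $F$ extends as a family of fibres of $\mathcal{F} \to \mathbb{P}^1 \times \mathbb{A}^1$, and $Z$ extends as the image of the section of $\mathcal{F}$ determined by the surjection $\mathcal{E} \to \mathcal{O}$ built into the extension defining $\mathcal{E}$ (citing the correspondence between quotient line bundles and sections of a projective bundle). You instead compute $\Pic(\mathcal{F})$ wholesale: the projective-bundle formula together with $\Pic(\mathbb{P}^1 \times \mathbb{A}^1) \cong \Pic(\mathbb{P}^1) = \mathbb{Z}$ gives $\Pic(\mathcal{F}) = \mathbb{Z}\,\pi^*\mathcal{O}_{\mathbb{P}^1}(1) \oplus \mathbb{Z}\,\mathcal{O}_{\mathbb{P}(\mathcal{E})}(1)$, and restriction to the central fibre hits a basis, so every $L$ extends; flatness is indeed automatic since a line bundle on $\mathcal{F}$ is flat over the smooth family $\mathcal{F} \to \mathbb{A}^1$. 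One remark: the point you flag as delicate is not actually needed in the form you state it. You do not have to identify $\mathcal{O}_{\mathbb{P}(\mathcal{E}_0)}(1)$ with $Z$ plus an explicit multiple of $F$, nor compute any self-intersection: since $\mathcal{O}_{\mathbb{P}(\mathcal{E})}(1)$ restricts to $\mathcal{O}_{\mathbb{P}(\mathcal{E}_0)}(1)$ by compatibility with base change, the projective-bundle formula applied to the fibre $\mathbb{F}_{2n} = \mathbb{P}(\mathcal{E}_0)$ already says that $\{\,\mathcal{O}(F),\, \mathcal{O}_{\mathbb{P}(\mathcal{E}_0)}(1)\,\}$ is a $\mathbb{Z}$-basis of $\Pic(\mathbb{F}_{2n})$, so surjectivity (in fact bijectivity) of the restriction map is immediate and there is no finite-index issue. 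Your approach buys a slightly stronger statement (restriction on Picard groups is an isomorphism, so the extension is essentially canonical) at the cost of invoking the Picard computation of the total space; the paper's approach is more hands-on, extending the two divisor classes directly without ever computing $\Pic(\mathcal{F})$. Both handle the odd case identically.
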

\begin{proof}
  It suffices to show this for the generators $F$ and $Z$ of $\Pic(\mathbb{F}_{2n})$. These are defined (as divisors) as a fibre of $\pi : \mathbb{F}_{2n} \to \mathbb{P}^1$ and a section of $\pi$ respectively. Thus $F$ clearly extends to the family because we can simply take a family of fibres of $\mathcal{F} \to \mathbb{A}^1 \times \mathbb{P}^1$. For the section, note that the family $\mathcal{E}$ on $\mathbb{P}^1 \times \mathbb{A}^1$ comes with a surjection $\mathcal{E} \to \mathcal{O}$ because $\mathcal{E}$ is an extension. This defines a section \cite[Prop. 7.12]{hartshorne77_algeb_geomet} and we simply take the image of this section as our extension of $Z$. Again the odd case is similar.
\end{proof}

\begin{proposition}
  Let $r = 2$. Choose a line bundle $\Delta$ on $\mathbb{F}_a$. Let $c_2$ be the smallest number such that the Bogomolov inequality is satisfied with $r = 2$. Then Conjecture \ref{sec:strong-conjecture} holds for any polarisation $H$ on $\mathbb{F}_a$ such that $H.\Delta$ is odd.
\end{proposition}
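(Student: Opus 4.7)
The plan is to combine deformation invariance (Proposition \ref{prp:deformation-invariance}) applied to the families of Proposition \ref{prp:family-f-a} with a line-bundle-twist trick on the generic fibre. Assume $a = 2n$; the odd case $a = 2n+1$ is handled by the analogous family. Take the family $\mathcal{F} \to \mathbb{A}^1$ from Proposition \ref{prp:family-f-a} with special fibre $\mathbb{F}_{2n}$ over $0$ and generic fibre $\mathbb{F}_0$, and use Lemma \ref{lmm:extend-line-bundles} to extend $\Delta$ and $H$ to line bundles $\tilde\Delta, \tilde H$ on the total space. Intersection numbers are constant in flat families, so $\tilde H.\tilde\Delta$ is odd on every fibre (hence $\gcd(2, \tilde H.\tilde\Delta) = 1$ and every $M_s$ contains only stable sheaves), and $\tilde\Delta^2$ is constant, so the Bogomolov-minimal $c_2$ is the same integer on every fibre. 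Ampleness being an open condition, after shrinking $\mathbb{A}^1$ to an open neighbourhood $U$ of $0$ we may assume $\tilde H$ is fibrewise ample. All fibres are Hirzebruch surfaces, hence toric with only $(p,p)$-cohomology, so Proposition \ref{prp:deformation-invariance} applies and the locus where Conjecture \ref{sec:strong-conjecture} holds is open and closed in $U$. It therefore suffices to verify the conjecture on the generic fibre $\mathbb{F}_0$ with the invariants $\Delta' = \tilde\Delta|_{\mathrm{gen}}$, $H' = \tilde H|_{\mathrm{gen}}$, and the same minimal $c_2$.

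On the generic fibre I reduce to a computationally verified case via a twist. Since $H'\cdot\Delta'$ is odd, $\Delta'$ is not in $2\Pic$, so its image in $\Pic(\mathbb{F}_0)/2\Pic(\mathbb{F}_0) \cong (\mathbb{Z}/2)^2$ coincides with one of $F$, $Z$, $F+Z$; call this representative $\Delta_0$. Pick a line bundle $L$ with $2L = \Delta' - \Delta_0$. Tensoring by $L$ gives an isomorphism of moduli spaces $M^{H'}(2, \Delta', c_2) \xrightarrow{\sim} M^{H'}(2, \Delta_0, c_2')$ where $c_2' = c_2 + \Delta'\cdot L + L^2$, and the identity $(c_1+2L)^2/4 = c_1^2/4 + c_1\cdot L + L^2$ together with $c_1\cdot L + L^2 \in \mathbb{Z}$ shows that $c_2'$ is again Bogomolov-minimal for $\Delta_0$. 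Crucially, the classes $\ch_i(\gamma)$ are preserved by this twist, because
\[
(\mathcal{E} \otimes L) \otimes \det(\mathcal{E} \otimes L)^{-1/2} \cong \mathcal{E} \otimes L \otimes \det(\mathcal{E})^{-1/2} \otimes L^{-1} \cong \mathcal{E} \otimes \det(\mathcal{E})^{-1/2}.
\]
Consequently all integrals appearing in the conjecture are preserved by the twist, and the computational verification for $(\mathbb{F}_0, \Delta_0, c_2')$ yields the conjecture on the generic fibre, which in turn gives it on $\mathbb{F}_a$ by the previous paragraph.

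The main obstacle is the compatibility between the twist and the Bogomolov-minimality of $c_2$: without it, the twist might land in a moduli space with non-minimal $c_2$ that has not been directly checked. This is resolved by the ceiling-function identity noted above, which shows that the integer shift in $c_2$ under the twist exactly matches the shift in $c_1^2/4$. Minor routine checks remain: that Lemma \ref{lmm:extend-line-bundles} extends $\Delta$ and $H$ simultaneously (immediate, since the lemma applies to each line bundle independently), and that the restriction of $\tilde\Delta$ to $\mathbb{F}_0$ actually gives a specific line bundle whose $2\Pic$-class we can identify — but since we only use this class mod $2\Pic$, its precise identity does not matter.
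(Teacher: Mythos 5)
Your proposal is correct and follows essentially the same route as the paper: deform via the family of Proposition \ref{prp:family-f-a}, extend $\Delta$ and $H$ by Lemma \ref{lmm:extend-line-bundles}, apply Proposition \ref{prp:deformation-invariance}, and then twist by a line bundle $L$ with $2L = \Delta' - \Delta_0$ to land in one of the explicitly computed cases on $\mathbb{F}_0$ (or $\mathbb{F}_1$) with minimal $c_2$. Your added details (the ceiling identity for Bogomolov-minimality and the invariance of $\ch_i(-\mathcal{E}\otimes\det(\mathcal{E})^{-1/2})$ under the twist) are correct refinements of steps the paper leaves implicit.
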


\begin{proof}
  The idea is to use Prop. \ref{prp:deformation-invariance}. We have constructed a smooth family $\mathcal{F} \to \mathbb{A}^1$ in Lemma \ref{prp:family-f-a}. By Lemma \ref{lmm:extend-line-bundles}, we can extend the polarisation $H$ and the line bundle $\Delta$
  to families $\mathcal{H}$ and $\mathcal{L}$ on $\mathcal{F}$. We can also extend $c_2$ by taking $(c_2 H) . Z$ for example. The intersection numbers $\mathcal{H}_t. \mathcal{L}_t$ are the same for every fibre. This is also true for other intersection numbers. Hence we deduce:
  \begin{enumerate}
  \item $\mathcal{H}_t.\mathcal{L}_t$ is odd for each closed point $t \in \mathbb{A}^1$.
  \item $\mathcal{H}_t$ is ample for each closed point $t \in \mathbb{A}^1$, hence is ample relative to $\pi$ \cite[Sec. 1.7]{lazarsfeld04_posit}
  \item $c_2$ is the minimal $c_2$ such that the Bogomolov inequality holds with $r = 2$ and $c_1 = \mathcal{L}_t$.
  \end{enumerate}

  We wish to verify the Virasoro constraints for the central fibre, so by Prop. \ref{prp:deformation-invariance} we can also do so for another fibre, which is always $\mathbb{F}_0$ or $\mathbb{F}_1$. We treat the first case, as the second is similar. For concreteness, we pick the fibre over $1 \in \mathbb{A}^1$ and verify Conjecture \ref{sec:strong-conjecture} there. Since $\mathcal{H}_1.\mathcal{L}_1$ is odd, $\mathcal{L}_1$ is not a multiple of 2. Hence there is a line bundle $L'$ on $\mathbb{F}_0$ such that $\mathcal{L}_1 + 2 L'$ is either $F$, $Z$ or $F+Z$. Now we use that when $\mu$-stability and Giesker stability coincide, that there is an isomorphism
  \[M^{\mathcal{H}_1}_{\mathbb{F}_0}(2, \mathcal{L}_1, c_2) \to M^{\mathcal{H}_1}_{\mathbb{F}_0}(2, \mathcal{L}_1 + 2L', c_2 + L'.(\mathcal{L}_1 + L')),\] which is given by $[E] \mapsto [E \otimes L']$ on closed points. On this second space, $c_2$ is still the minimal number such that the Bogomolov inequality holds. Hence we already know the Virasoro constraints for this case, by our explicit computation. 
\end{proof}

\begin{remark}
  One might wonder if the moduli space for $\mathbb{F}_a$ is actually nonempty in the above proposition. This is true. The relative moduli space $\mathcal{M} \to \mathbb{A}^1$ is projective over $\mathbb{A}^1$, so its image is closed. For $t \neq 0$, we know that the moduli space is not empty, because we have found explicit sheaves in the fixed point locus. But then the image of $\mathcal{M}$ must be all of $\mathbb{A}^1$, so the fibre above $t = 0$ cannot be empty.
\end{remark}

\appendix

\section{Data obtained from computations}
\label{sec:data-obtained-from}

In this section we will give the explicit results from the computations. We fix some notation. We work over a toric surface $X$. We denote the moduli space of stable sheaves on $X$ with rank $r$, determinant $\Delta$ and second Chern class $c_2$ by $M$. We denote the representation ring of $T = \mathbb{G}_m^2$ by $\mathbb{Z}[s, t]$. For projective space, we let $X_1 = (1:0:0)$, $X_2 = (0:1:0)$ and $X_3 = (0:0:1)$. We let $H$ be the hyperplane class. For the Hirzebruch surface $\mathbb{F}_a$, we let $X_1$, $X_2$, $X_3$ and $X_4$ be the fixed points where the tangent space is given by $s^{-1} + t^{-1}$, $s^{-1} + t$, $s + ts^a$ and $s + t^{-1}s^{-a}$ respectively (in standard coordinates). Furthermore, recall that $\Pic \mathbb{F}_a = \mathbb{Z}F + \mathbb{Z}Z$ where $F$ is a fibre of the projection $\mathbb{F}_a \to \mathbb{P}^1$ and $Z$ is a section of this projection. We verified the conjecture for all polarisations $H$ not on a wall (or equivalently, we picked some $H$ from each chamber). When the choice of $H$ is relevant, we write which choice we made, but otherwise we will not mention $H$.

For each of the cases we have considered, we write down the explicit K-theoretic representations of all equivariant sheaves at the fixed points. We also write down $\int_M R_kD$, $\int_M T_kD$ and $\int_MS_kD$ for $k = \dim M$, $\dim M - 1$ and $\dim M - 2$ and all $D$ of the correct degree. Even though we have checked it for all $k$ and $D$ but we do not give all the data for reason of space: we did 1677 independent checks in total.

\setlength{\extrarowheight}{2pt}

\vspace{1em}
\begin{centering}
  \begin{tabularx}{\textwidth}{
      |>{\centering\arraybackslash}X
      |>{\centering\arraybackslash}X
      |>{\centering\arraybackslash}X
      |>{\centering\arraybackslash}X
      |>{\centering\arraybackslash}X |}
    \hline
    $X = \mathbb{P}^2$ & $r = 2$ & $\Delta = \mathcal{O}(1)$ & $c_2 = 1$ & $\dim M = 0$
    \\\hline
\end{tabularx}
\end{centering}

The moduli space is zero-dimensional and there is only one equivariant sheaf. Therefore the Virasoro constraints are automatic. We give the (K-theory class of) this sheaf below.

\begin{center}
\begin{tabularx}{.8\textwidth}{
    |>{\centering\arraybackslash}X
    |>{\centering\arraybackslash}X
    |>{\centering\arraybackslash}X |}
  \hline $F|_{X_1}$ & $F|_{X_2}$ & $F|_{X_3}$ \\ \noalign{\hrule height 1.5pt}
  $t^{-1} + s^{-1}$ & $1 + t^{-1}$ & $1 + s^{-1}$ \\ \hline
\end{tabularx}
\end{center}


\vspace{1em}
\begin{centering}
  \begin{tabularx}{\textwidth}{
      |>{\centering\arraybackslash}X
      |>{\centering\arraybackslash}X
      |>{\centering\arraybackslash}X
      |>{\centering\arraybackslash}X
      |>{\centering\arraybackslash}X |}
    \hline
    $X = \mathbb{P}^2$ & $r = 3$ & $\Delta = \mathcal{O}(1)$ & $c_2 = 2$ & $\dim M = 2$
    \\\hline
\end{tabularx}
\end{centering}

\begin{center}
\begin{tabularx}{.8\textwidth}{
    |>{\centering\arraybackslash}X
    |>{\centering\arraybackslash}X
    |>{\centering\arraybackslash}X |}
  \hline $F|_{X_1}$ & $F|_{X_2}$ & $F|_{X_3}$ \\ \noalign{\hrule height 1.5pt}
  $st^{-1} + 1 + ts^{-1}$ & $1 + st^{-1} + t$ & $1 + ts^{-1} + s$ \\ \hline
  $1 + t^{-1} + ts^{-1}$ & $1 + t^{-1} + t$ & $1 + t + s^{-1}$ \\ \hline
  $1 + s^{-1} + st^{-1}$ & $1 + s + t^{-1}$ & $1 + s^{-1} + s$ \\ \hline
\end{tabularx}
\end{center}

\begin{center}
  \begin{tabularx}{\textwidth}{
      |>{\centering\arraybackslash}X
      |>{\centering\arraybackslash}X
      |>{\centering\arraybackslash}X
      |>{\centering\arraybackslash}X|}
    \hline
    $D$ & $\int_M R_kD$ & $\int_M T_kD$ & $\int_M S_kD$ \\ \hline \hline
    1 &  $0$ & $-10/3$ & $10/3$ \\ \hline
$\ch_2(H)$ &  $7/9$ & $-7/9$ & $0$ \\ \hline
$\ch_3(1)$ &  $-17/18$ & $-49/54$ & $50/27$ \\ \hline
$\ch_2(H)\ch_2(H)$ &  $2/9$ & $-1/9$ & $-1/9$ \\ \hline
$\ch_3(1)\ch_2(H)$ &  $7/27$ & $-7/54$ & $-7/54$ \\ \hline
$\ch_2(H)\ch_3(1)$ &  $7/27$ & $-7/54$ & $-7/54$ \\ \hline
$\ch_3(1)\ch_3(1)$ &  $49/162$ & $-49/324$ & $-49/324$ \\ \hline
$\ch_2(\mathbf{p})$ &  $10/3$ & $-5/3$ & $-5/3$ \\ \hline
$\ch_3(H)$ &  $7/9$ & $-7/18$ & $-7/18$ \\ \hline
$\ch_4(1)$ &  $-17/18$ & $17/36$ & $17/36$ \\ \hline

  \end{tabularx}
\end{center}


\vspace{1em}
\begin{centering}
  \begin{tabularx}{\textwidth}{
      |>{\centering\arraybackslash}X
      |>{\centering\arraybackslash}X
      |>{\centering\arraybackslash}X
      |>{\centering\arraybackslash}X
      |>{\centering\arraybackslash}X |}
    \hline
    $X = \mathbb{P}^2$ & $r = 4$ & $\Delta = \mathcal{O}(1)$ & $c_2 = 3$ & $\dim M = 6$
    \\\hline
\end{tabularx}
\end{centering}

\begin{center}
\begin{tabularx}{\textwidth}{
    |>{\centering\arraybackslash}X
    |>{\centering\arraybackslash}X
    |>{\centering\arraybackslash}X |}
  \hline $F|_{X_1}$ & $F|_{X_2}$ & $F|_{X_3}$ \\ \noalign{\hrule height 1.5pt}
  $1 + s + ts^{-1} + st^{-1}$ &  $2 + t^{-1} + t$ & $1 + st^{-1} + s + s^{-1}$ \\ \hline
  $2 + ts^{-1} + st^{-1}$ & $ 1 + s^{-1} + t^{-1} + t$ & $ 1 + s + s^{-1} + t^{-1}$ \\ \hline
  $1 + t + ts^{-1} + st^{-1}$ & $ 1 + ts^{-1} + t^{-1} + t$ & $ 2 + s + s^{-1}$ \\ \hline
  $ s + t + s^2t^{-1} + t^2s^{-1}$ & $ st^{-1} + s + t + t^2s^{-1}$ & $ ts^{-1} + t + s + s^2t^{-1}$ \\ \hline
  $ s^{-1} + t^{-1} + st^{-1} + s^2t^{-1}$ & $ s + st^{-1} + t^{-1} + s^{-1}t^{-1}$ & $ t^{-1} + st^{-1} + s^{-1} t^{-1} + s^2t^{-1}$ \\ \hline
  $t^{-1} + s^{-1} + ts^{-1} + t^2s^{-1}$ & $ s^{-1} + s^{-1} t^{-1} + ts^{-1} + t^2s^{-1}$ & $ t + ts^{-1} + s^{-1} + s^{-1} t^{-1}$ \\ \hline
  $s + t + t^2 + s^2$ & $ 1 + s + ts + t^2$ & $ 1 + t + ts + s^2$ \\ \hline
  $s + t + t^2 + ts^2$ & $ 1 + t + st + st^2$ & $1 + t + st + ts^2$ \\ \hline
  $1 + st^{-1} + st^{-2} + s^2t^{-3}$ & $ 1 + st^{-1} + st^{-2} + st^{-3}$ & $ t^{-1} + st^{-1} + s^2t^{-2} + st^{-3}$ \\ \hline
  $1  + s^{-1} + ts^{-2} + t^2s^{-3}$ & $ s^{-1} + s^{-2} + ts^{-1} + t^2s^{-3}$ & $ 1 + s^{-1} + ts^{-2} + ts^{-3}$ \\ \hline
  $s + s^2 + t + st^2$ & $ 1 + s+ ts + t^2s$ & $ 1 + s+ st + s^2t$ \\ \hline
  $1 + t^{-1} + st^{-2} + s^2t^{-3}$ & $ 1 + t^{-1} + st^{-2} + st^{-3}$ & $t^{-1} + t^{-2} + st^{-1} + s^2t^{-3}$ \\ \hline
  $1 + ts^{-1} + ts^{-2} + t^2s^{-3}$ & $s^{-1} + ts^{-1} + ts^{-3} + t^2s^{-2}$ & $ 1 + ts^{-1} + ts^{-2} + ts^{-3}$ \\ \hline
\end{tabularx}
\end{center}

Note that the denominators in the next expressions are all powers of two. It is unknown to the author why this happens. For this case we added a few extra pieces of data in the end, to show how large the numbers become. Notice that the last example has a denominator of $2^{24}$. It looks very complicated, but in this case, $k = 0$, so it follows from Prop. \ref{prp:k-0--1}!

\begin{center}
  \begin{tabularx}{\textwidth}{
      |>{\centering\arraybackslash}X
      |>{\centering\arraybackslash}X
      |>{\centering\arraybackslash}X
      |>{\centering\arraybackslash}X|}
    \hline
    $D$ & $\int_M R_kD$ & $\int_M T_kD$ & $\int_M S_kD$ \\ \hline \hline
    1 &  $0$ & $-49511/4096$ & $49511/4096$ \\ \hline
$\ch_2(H)$ &  $-4227/2048$ & $3567/2048$ & $165/512$ \\ \hline
$\ch_3(1)$ &  $-63993/32768$ & $-17835/8192$ & $135333/32768$ \\ \hline
$\ch_2(H)\ch_2(H)$ &  $-43/2048$ & $3191/4096$ & $-3105/4096$ \\ \hline
$\ch_3(1)\ch_2(H)$ &  $-235/2048$ & $-15955/16384$ & $17835/16384$ \\ \hline
$\ch_2(H)\ch_3(1)$ &  $-235/2048$ & $-15955/16384$ & $17835/16384$ \\ \hline
$\ch_3(1)\ch_3(1)$ &  $10475/32768$ & $79775/65536$ & $-100725/65536$ \\ \hline
$\ch_2(\mathbf{p})$ &  $7073/1024$ & $23419/2048$ & $-37565/2048$ \\ \hline
$\ch_3(H)$ &  $-4227/2048$ & $-5751/4096$ & $14205/4096$ \\ \hline
    $\ch_4(1)$ &  $-63993/32768$ & $-376779/65536$ & $504765/65536$ \\ \hline
    $\ch_3(1)^4$ &  $-52875/131072$ & $-16875/1048576$ & $439875/1048576$ \\ \hline
    $\ch_4(1)\ch_4(1)\ch_3(1)$ &  $-9632397/8388608$ & $-2039225/2097152$ & $17789297/8388608$ \\ \hline
    $\ch_7(1)$ &  $-21331/262144$ & $-2095/196608$ & $72373/786432$ \\ \hline
    $\ch_4(1)\ch_4(1)\ch_4(1)$ &  $-34071111/8388608$ & $56785185/16777216$ & $11357037/16777216$ \\ \hline
  \end{tabularx}
\end{center}


\vspace{1em}
\begin{centering}
  \begin{tabularx}{\textwidth}{
      |>{\centering\arraybackslash}X
      |>{\centering\arraybackslash}X
      |>{\centering\arraybackslash}X
      |>{\centering\arraybackslash}X
      |>{\centering\arraybackslash}X |}
    \hline
    $X = \mathbb{P}^2$ & $r = 2$ & $\Delta = \mathcal{O}(1)$ & $c_2 = 2$ & $\dim M = 4$
    \\\hline
\end{tabularx}
\end{centering}

\begin{center}
\begin{tabularx}{.8\textwidth}{
    |>{\centering\arraybackslash}X
    |>{\centering\arraybackslash}X
    |>{\centering\arraybackslash}X |}
  \hline $F|_{X_1}$ & $F|_{X_2}$ & $F|_{X_3}$ \\ \noalign{\hrule height 1.5pt}

  $t^{-1} + 1 + ts^{-1} - t$ & $ 1 + t^{-1}$ & $ 1+s^{-1}$ \\ \hline
  $s^{-1} + 1 + st^{-1} - s$ & $ 1 + t^{-1}$ & $ 1+s^{-1}$ \\ \hline
  $t^{-1} + s^{-1}$ & $ 1 + s^{-1} + s^{-1}t^{-1} - s^{-2}$ & $ 1+s^{-1}$ \\ \hline
  $t^{-1} + s^{-1}$ & $ t^{-1} + s^{-1} + ts^{-1} - ts^{-2}$ & $ 1+s^{-1}$ \\ \hline
  $t^{-1} + s^{-1}$ & $ 1 + t^{-1}$ & $ 1 + t^{-1} + s^{-1}t^{-1} - t^{-2}$ \\ \hline
  $t^{-1} + s^{-1}$ & $ 1 + t^{-1}$ & $ s^{-1} + st^{-1} + t^{-1} - st^{-2}$ \\ \hline
  $st^{-1} + ts^{-1}$ & $ st^{-1} + t$ & $ ts^{-1} + s$ \\ \hline
  $s^{-1} + st^{-1}$ & $ t^{-1} + s$ & $ s^{-1} + s$ \\ \hline
  $t^{-1} + ts^{-1}$ & $ t + t^{-1}$ & $ t + s^{-1}$ \\ \hline
\end{tabularx}
\end{center}

\begin{center}
  \begin{tabularx}{\textwidth}{
      |>{\centering\arraybackslash}X
      |>{\centering\arraybackslash}X
      |>{\centering\arraybackslash}X
      |>{\centering\arraybackslash}X|}
    \hline
    $D$ & $\int_M R_kD$ & $\int_M T_kD$ & $\int_M S_kD$ \\ \hline \hline
    1 &  $0$ & $255/16$ & $-255/16$ \\ \hline
$\ch_2(H)$ &  $6$ & $-6$ & $0$ \\ \hline
$\ch_3(1)$ &  $-219/64$ & $15$ & $-741/64$ \\ \hline
$\ch_2(H)\ch_2(H)$ &  $-21/8$ & $27/16$ & $15/16$ \\ \hline
$\ch_3(1)\ch_2(H)$ &  $6$ & $-6$ & $0$ \\ \hline
$\ch_2(H)\ch_3(1)$ &  $6$ & $-6$ & $0$ \\ \hline
$\ch_3(1)\ch_3(1)$ &  $-27/4$ & $27/2$ & $-27/4$ \\ \hline
$\ch_2(\mathbf{p})$ &  $-51/8$ & $45/16$ & $57/16$ \\ \hline
$\ch_3(H)$ &  $6$ & $-6$ & $0$ \\ \hline
$\ch_4(1)$ &  $-219/64$ & $837/128$ & $-399/128$ \\ \hline

  \end{tabularx}
\end{center}


\vspace{1em}
\begin{centering}
  \begin{tabularx}{\textwidth}{
      |>{\centering\arraybackslash}X
      |>{\centering\arraybackslash}X
      |>{\centering\arraybackslash}X
      |>{\centering\arraybackslash}X
      |>{\centering\arraybackslash}X |}
    \hline
    $X = \mathbb{P}^2$ & $r = 2$ & $\Delta = \mathcal{O}(1)$ & $c_2 = 3$ & $\dim M = 8$
    \\\hline
\end{tabularx}
\end{centering}

\begin{center}
  \begin{longtable}[c]{|>{\centering\arraybackslash}p{.304\textwidth}
    |>{\centering\arraybackslash}p{.304\textwidth}
    |>{\centering\arraybackslash}p{.304\textwidth}|}
  \hline $F|_{X_1}$ & $F|_{X_2}$ & $F|_{X_3}$ \\ \noalign{\hrule height 1.5pt}
  \endfirsthead
$st^{-1} + 1 - s + ts^{-1} + 1 - t$ & $ 1 + t^{-1}$ & $ 1+s^{-1}$ \\ \hline
$t^{-1} + t^2s^{-1} - t^2 + 1$ & $ 1 + t^{-1}$ & $ 1+s^{-1}$ \\ \hline
$s^2t^{-1}- s^2 + 1 + s^{-1}$ & $ 1 + t^{-1}$ & $ 1+s^{-1}$ \\ \hline
$t^{-1} + ts^{-1} + s - st$ & $ 1 + t^{-1}$ & $ 1 + s^{-1}$ \\ \hline
$s^{-1} + st^{-1} + t - st$ & $ 1 + t^{-1}$ & $ 1 + s^{-1}$ \\ \hline
$t^{-1} + s^{-1}$ & $ s^{-1} + ts^{-1} - ts^{-2} + s^{-1}t^{-1} + s^{-1} - s^{-2}$ & $ 1+s^{-1}$ \\ \hline
$t^{-1} + s^{-1}$ & $ s^{-1} + t^2s^{-2} - t^2s^{-3} + t^{-1}$ & $ 1+s^{-1}$ \\ \hline
$t^{-1} + s^{-1}$ & $ 1 + t^{-1}s^{-2} + s^{-1} - s^{-3}$ & $ 1+s^{-1}$ \\ \hline
$t^{-1} + s^{-1}$ & $ 1 + ts^{-2} - ts^{-3} + s^{-1}t^{-1}$ & $ 1 + s^{-1}$ \\ \hline
$t^{-1} + s^{-1}$ & $ t^{-1} + ts^{-1} + s^{-2} - ts^{-3}$ & $ 1 + s^{-1}$ \\ \hline
$t^{-1} + s^{-1}$ & $ 1 + t^{-1}$ & $ s^2t^{-2} + t^{-1} - s^2t^{-3} +s^{-1}$ \\ \hline
$t^{-1} + s^{-1}$ & $ 1 + t^{-1}$ & $ st^{-1} + t^{-1} - st^{-2} + s^{-1}t^{-1} + t^{-1} - t^{-2}$ \\ \hline
$t^{-1} + s^{-1}$ & $ 1 + t^{-1}$ & $ 1+s^{-1}t^{-2} + t^{-1} - t^{-3}$ \\ \hline
$t^{-1} + s^{-1}$ & $ 1 + t^{-1}$ & $ 1 + s^{-1}t^{-1} + st^{-2} - st^{-3}$ \\ \hline
$t^{-1} + s^{-1}$ & $ 1 + t^{-1}$ & $ s^{-1} + t^{-2} + st^{-1} - st^{-3}$ \\ \hline
$1 + st^{-1} - s + s^{-1}$ & $ s^{-1} + ts^{-1} - ts^{-2} + t^{-1}$ & $ 1 + s^{-1}$ \\ \hline
$1 + st^{-1} - s + s^{-1}$ & $ 1 + s^{-1} t^{-1} + s^{-1} - s^{-2}$ & $ 1 + s^{-1}$ \\ \hline
$1 + st^{-1} - s + s^{-1}$ & $ 1 + t^{-1}$ & $ t^{-1} + st^{-1} - st^{-2} + s^{-1}$ \\ \hline
$1 + st^{-1} - s + s^{-1}$ & $ 1 + t^{-1}$ & $ 1 + s^{-1}t^{-1} + t^{-1} - t^{-2}$ \\ \hline
$t^{-1} + 1 + ts^{-1} - t$ & $ s^{-1} + ts^{-1} - ts^{-2} + t^{-1}$ & $ 1 + s^{-1}$ \\ \hline
$t^{-1} + 1 + ts^{-1} - t$ & $ 1 + s^{-1}t^{-1} + s^{-1} - s^{-2}$ & $ 1 + s^{-1}$ \\ \hline
$t^{-1} + 1 + ts^{-1} - t$ & $ 1 + t^{-1}$ & $ t^{-1} + st^{-1} - st^{-2} + s^{-1}$ \\ \hline
$t^{-1} + 1 + ts^{-1} - t$ & $ 1 + t^{-1}$ & $ 1 + s^{-1}t^{-1} + t^{-1} - t ^ {-2}$ \\ \hline
$t^{-1} + s^{-1}$ & $ s^{-1} + ts^{-1} - ts^{-2} + t^{-1}$ & $ t^{-1} + st^{-1} - st^{-2} + s^{-1}$ \\ \hline
$t^{-1} + s^{-1}$ & $ s^{-1} + ts^{-1} - ts^{-2} + t^{-1}$ & $ 1 + s^{-1}t^{-1} + t^{-1} - t ^ {-2}$ \\ \hline
$t^{-1} + s^{-1}$ & $ 1 + s^{-1}t^{-1} + s^{-1} - s ^ {-2}$ & $ t^{-1} + st^{-1} - st^{-2} + s^{-1}$ \\ \hline
$t^{-1} + s^{-1}$ & $ 1 + s^{-1}t^{-1} + s^{-1} - s ^ {-2}$ & $ 1 + s^{-1}t^{-1} + t^{-1} - t ^ {-2}$ \\ \hline                  
$s^2t^{-1} + s - s^2 + ts^{-1}$ & $ st^{-1} + t$ & $ ts^{-1} + s$ \\ \hline
$st^{-1} + t^2s^{-1} + t - t^2$ & $ st^{-1} + t$ & $ ts^{-1} + s$ \\ \hline
$st^{-1} + ts^{-1}$ & $ t^{-1} + 1 - s^{-1} + t$ & $ ts^{-1} + s$ \\ \hline
$st^{-1} + ts^{-1}$ & $ st^{-1} + ts^{-1} + t^2s^{-1} - t^2s^{-2}$ & $ ts^{-1} + s$ \\ \hline
$st^{-1} + ts^{-1}$ & $ st^{-1} + t$ & $ 1 - t^{-1} + s^{-1} + s$ \\ \hline
$st^{-1} + ts^{-1}$ & $ st^{-1} + t$ & $ ts^{-1} + s^2t^{-1} - s^2t^{-2} + st^{-1}$ \\ \hline
$ts^{-1} + 1 - t + st^{-1}$ & $ t^{-1} + s$ & $ s^{-1} + s$ \\ \hline
$s^{-1} + s^2t^{-1} + s - s^2$ & $ t^{-1} + s$ & $ s^{-1} + s$ \\ \hline
$s^{-1} + st^{-1}$ & $ s^{-1} + s^{-1}t^{-1} - s ^ {-2} + s$ & $ s^{-1} + s$ \\ \hline
$s^{-1} + st^{-1}$ & $ t^{-1} + t + 1 - ts^{-1}$ & $ s^{-1} + s$ \\ \hline
$s^{-1} + st^{-1}$ & $ t^{-1} + s$ & $ t^{-1} + s^{-1}t^{-1} - t^{-2} + s$ \\ \hline
$s^{-1} + st^{-1}$ & $ t^{-1} + s$ & $ s^{-1} + s^2t^{-1} + st^{-1} - s^2t^{-2}$ \\ \hline
$1 + st^{-1} - s + ts^{-1}$ & $ t + t^{-1}$ & $ t + s^{-1}$ \\ \hline
$t^{-1} + t + t^2s^{-1} - t^2$ & $ t + t^{-1}$ & $ t + s^{-1}$ \\ \hline
$t^{-1} + ts^{-1}$ & $ t^2s^{-1} + ts^{-1} - t^2s^{-2} + t^{-1}$ & $ t + s^{-1}$ \\ \hline
$t^{-1} + ts^{-1}$ & $ t + s^{-1} + s^{-1}t^{-1} - s ^ {-2}$ & $ t + s^{-1}$ \\ \hline
$t^{-1} + ts^{-1}$ & $ t + t^{-1}$ & $ s + 1 - st^{-1} + s^{-1}$ \\ \hline
$t^{-1} + ts^{-1}$ & $ t + t^{-1}$ & $ t + t^{-1} + s^{-1}t^{-1} - t^{-2}$ \\ \hline
$st^{-2} + ts^{-2}$ & $ st^{-2} + ts^{-1}$ & $ ts^{-2} + st^{-1}$ \\ \hline
$ts^{-2} + s$ & $ s^{-1} + ts$ & $ ts + ts^{-2}$ \\ \hline
$t + st^{-2}$ & $ st^{-2} + st$ & $ st + t^{-1}$ \\ \hline
\end{longtable}
\end{center}

Though the numbers in this case are not as spectacularly large as in the rank 4 case, we have added some additional cases to show that 3 and 5 can occur as factors in the denominator.

\begin{center}
  \begin{tabularx}{\textwidth}{
      |>{\centering\arraybackslash}X
      |>{\centering\arraybackslash}X
      |>{\centering\arraybackslash}X
      |>{\centering\arraybackslash}X|}
    \hline
    $D$ & $\int_M R_kD$ & $\int_M T_kD$ & $\int_M S_kD$ \\ \hline \hline
    1 &  $0$ & $6597/32$ & $-6597/32$ \\ \hline
$\ch_2(H)$ &  $291/4$ & $-291/4$ & $0$ \\ \hline
$\ch_3(1)$ &  $-8209/128$ & $2487/8$ & $-31583/128$ \\ \hline
$\ch_2(H)\ch_2(H)$ &  $-293/16$ & $5/32$ & $581/32$ \\ \hline
$\ch_3(1)\ch_2(H)$ &  $1689/16$ & $-1689/16$ & $0$ \\ \hline
$\ch_2(H)\ch_3(1)$ &  $1689/16$ & $-1689/16$ & $0$ \\ \hline
$\ch_3(1)\ch_3(1)$ &  $-11739/64$ & $15267/32$ & $-18795/64$ \\ \hline
$\ch_2(\mathbf{p})$ &  $-733/16$ & $-725/32$ & $2191/32$ \\ \hline
$\ch_3(H)$ &  $291/4$ & $-291/4$ & $0$ \\ \hline
$\ch_4(1)$ &  $-8209/128$ & $40519/256$ & $-24101/256$ \\ \hline
    $\ch_6(1)\ch_3(1)$ &  $-214651/12288$ & $8453/512$ & $11779/12288$ \\ \hline
    $\ch_6(\mathbf{p})$ &  $-733/1920$ & $1153/3840$ & $313/3840$ \\ \hline
    $\ch_8(1)$ &  $-8209/15360$ & $15757/30720$ & $661/30720$ \\ \hline
    
  \end{tabularx}
\end{center}


\vspace{1em}
\begin{centering}
  \begin{tabularx}{\textwidth}{
      |>{\centering\arraybackslash}X
      |>{\centering\arraybackslash}X
      |>{\centering\arraybackslash}X
      |>{\centering\arraybackslash}X
      |>{\centering\arraybackslash}X |}
    \hline
    $X = \mathbb{F}_0$ & $r = 2$ & $\Delta = F$ & $c_2 = 1$ & $\dim M = 1$
    \\\hline
\end{tabularx}
\end{centering}

\begin{center}
\begin{tabularx}{.8\textwidth}{
    |>{\centering\arraybackslash}X
    |>{\centering\arraybackslash}X
    |>{\centering\arraybackslash}X
    |>{\centering\arraybackslash}X |}
  \hline $F|_{X_1}$ & $F|_{X_2}$ & $F|_{X_3}$ & $F|_{X_4}$ \\ \noalign{\hrule height 1.5pt}
  $1 + t^2s^{-1}$ & $s^{-1} + t^2$ & $t^2 + 1$ & $ t^2 + 1$ \\ \hline
  $1 + t^2$ & $1 + t^2$ & $t^2 + s$ & $1 + st^2$ \\ \hline
\end{tabularx}
\end{center}

\begin{center}
  \begin{tabularx}{\textwidth}{
      |>{\centering\arraybackslash}X
      |>{\centering\arraybackslash}X
      |>{\centering\arraybackslash}X
      |>{\centering\arraybackslash}X|}
    \hline
    $D$ & $\int_M R_kD$ & $\int_M T_kD$ & $\int_M S_kD$ \\ \hline \hline
    $\ch_2(Z)$ &  $-1/2$ & $0$ & $1/2$ \\ \hline
$\ch_2(F)$ &  $1$ & $0$ & $-1$ \\ \hline
$\ch_3(1)$ &  $0$ & $0$ & $0$ \\ \hline

  \end{tabularx}
\end{center}


\vspace{1em}
\begin{centering}
  \begin{tabularx}{\textwidth}{
      |>{\centering\arraybackslash}X
      |>{\centering\arraybackslash}X
      |>{\centering\arraybackslash}X
      |>{\centering\arraybackslash}X
      |>{\centering\arraybackslash}X
      |>{\centering\arraybackslash}X |}
    \hline
    $X = \mathbb{F}_0$ & $r = 2$ & $\Delta = F + Z$ & $c_2 = 2$ & $\dim M = 3$ & $H = 2F + 5Z$
    \\\hline
\end{tabularx}
\end{centering}
\begin{center}
\begin{tabularx}{.8\textwidth}{
    |>{\centering\arraybackslash}X
    |>{\centering\arraybackslash}X
    |>{\centering\arraybackslash}X
    |>{\centering\arraybackslash}X |}
  \hline $F|_{X_1}$ & $F|_{X_2}$ & $F|_{X_3}$ & $F|_{X_4}$ \\ \noalign{\hrule height 1.5pt}
  $1 + s^{-1}t$& $t^2 + s^{-1}$& $t^2 + 1$& $t + 1$\\ \hline
 $t + t^{-1}$& $t + 1$& $s + t$& $st + t^{-1}$\\ \hline
 $t + 1$& $t^2 + 1$& $t^2 + s$& $st + 1$\\ \hline
 $s^{-1}t + t^{-1}$& $t + s^{-1}$& $t + 1$& $t + t^{-1}$\\ \hline
\end{tabularx}
\end{center}

\begin{center}
  \begin{tabularx}{\textwidth}{
      |>{\centering\arraybackslash}X
      |>{\centering\arraybackslash}X
      |>{\centering\arraybackslash}X
      |>{\centering\arraybackslash}X|}
    \hline
    $D$ & $\int_M R_kD$ & $\int_M T_kD$ & $\int_M S_kD$ \\ \hline \hline
    1 &  $0$ & $0$ & $0$ \\ \hline
$\ch_2(Z)$ &  $-1/8$ & $-1/4$ & $3/8$ \\ \hline
$\ch_2(F)$ &  $3/8$ & $3/4$ & $-9/8$ \\ \hline
$\ch_3(1)$ &  $0$ & $0$ & $0$ \\ \hline
$\ch_2(Z)\ch_2(Z)$ &  $0$ & $0$ & $0$ \\ \hline
$\ch_2(F)\ch_2(Z)$ &  $0$ & $0$ & $0$ \\ \hline
$\ch_3(1)\ch_2(Z)$ &  $-3/16$ & $0$ & $3/16$ \\ \hline
$\ch_2(Z)\ch_2(F)$ &  $0$ & $0$ & $0$ \\ \hline
$\ch_2(F)\ch_2(F)$ &  $0$ & $0$ & $0$ \\ \hline
$\ch_3(1)\ch_2(F)$ &  $9/16$ & $0$ & $-9/16$ \\ \hline
$\ch_2(Z)\ch_3(1)$ &  $-3/16$ & $0$ & $3/16$ \\ \hline
$\ch_2(F)\ch_3(1)$ &  $9/16$ & $0$ & $-9/16$ \\ \hline
$\ch_3(1)\ch_3(1)$ &  $0$ & $0$ & $0$ \\ \hline
$\ch_2(\mathbf{p})$ &  $0$ & $0$ & $0$ \\ \hline
$\ch_3(Z)$ &  $-1/8$ & $0$ & $1/8$ \\ \hline
$\ch_3(F)$ &  $3/8$ & $0$ & $-3/8$ \\ \hline
$\ch_4(1)$ &  $0$ & $0$ & $0$ \\ \hline

  \end{tabularx}
\end{center}


\vspace{1em}
\begin{centering}
  \begin{tabularx}{\textwidth}{
      |>{\centering\arraybackslash}X
      |>{\centering\arraybackslash}X
      |>{\centering\arraybackslash}X
      |>{\centering\arraybackslash}X
      |>{\centering\arraybackslash}X
      |>{\centering\arraybackslash}X |}
    \hline
    $X = \mathbb{F}_0$ & $r = 2$ & $\Delta = F + Z$ & $c_2 = 2$ & $\dim M = 3$ & $H = 5F + 2Z$
    \\\hline
\end{tabularx}
\end{centering}

\begin{center}
\begin{tabularx}{.8\textwidth}{
    |>{\centering\arraybackslash}X
    |>{\centering\arraybackslash}X
    |>{\centering\arraybackslash}X
    |>{\centering\arraybackslash}X |}
  \hline $F|_{X_1}$ & $F|_{X_2}$ & $F|_{X_3}$ & $F|_{X_4}$ \\ \noalign{\hrule height 1.5pt}
  $s + s^{-1}$& $st + s^{-1}$& $s + t$& $s + 1$\\ \hline
 $st^{-1} + s^{-1}$& $s + s^{-1}$& $s + 1$& $s + t^{-1}$\\ \hline
 $st^{-1} + 1$& $s + 1$& $s^{2} + 1$& $s^{2} + t^{-1}$\\ \hline
 $s + 1$& $st + 1$& $s^{2} + t$& $s^{2} + 1$\\ \hline
\end{tabularx}
\end{center}

\begin{center}
  \begin{tabularx}{\textwidth}{
      |>{\centering\arraybackslash}X
      |>{\centering\arraybackslash}X
      |>{\centering\arraybackslash}X
      |>{\centering\arraybackslash}X|}
    \hline
    $D$ & $\int_M R_kD$ & $\int_M T_kD$ & $\int_M S_kD$ \\ \hline \hline
    1 &  $0$ & $0$ & $0$ \\ \hline
$\ch_2(Z)$ &  $3/8$ & $3/4$ & $-9/8$ \\ \hline
$\ch_2(F)$ &  $-1/8$ & $-1/4$ & $3/8$ \\ \hline
$\ch_3(1)$ &  $0$ & $0$ & $0$ \\ \hline
$\ch_2(Z)\ch_2(Z)$ &  $0$ & $0$ & $0$ \\ \hline
$\ch_2(F)\ch_2(Z)$ &  $0$ & $0$ & $0$ \\ \hline
$\ch_3(1)\ch_2(Z)$ &  $9/16$ & $0$ & $-9/16$ \\ \hline
$\ch_2(Z)\ch_2(F)$ &  $0$ & $0$ & $0$ \\ \hline
$\ch_2(F)\ch_2(F)$ &  $0$ & $0$ & $0$ \\ \hline
$\ch_3(1)\ch_2(F)$ &  $-3/16$ & $0$ & $3/16$ \\ \hline
$\ch_2(Z)\ch_3(1)$ &  $9/16$ & $0$ & $-9/16$ \\ \hline
$\ch_2(F)\ch_3(1)$ &  $-3/16$ & $0$ & $3/16$ \\ \hline
$\ch_3(1)\ch_3(1)$ &  $0$ & $0$ & $0$ \\ \hline
$\ch_2(\mathbf{p})$ &  $0$ & $0$ & $0$ \\ \hline
$\ch_3(Z)$ &  $3/8$ & $0$ & $-3/8$ \\ \hline
$\ch_3(F)$ &  $-1/8$ & $0$ & $1/8$ \\ \hline
$\ch_4(1)$ &  $0$ & $0$ & $0$ \\ \hline
  \end{tabularx}
\end{center}


\vspace{1em}
\begin{centering}
  \begin{tabularx}{\textwidth}{
      |>{\centering\arraybackslash}X
      |>{\centering\arraybackslash}X
      |>{\centering\arraybackslash}X
      |>{\centering\arraybackslash}X
      |>{\centering\arraybackslash}X
      |>{\centering\arraybackslash}X |}
    \hline
    $X = \mathbb{F}_0$ & $r = 2$ & $\Delta = F$ & $c_2 = 2$ & $\dim M = 5$ & $H = 2F + 7Z$
    \\\hline
\end{tabularx}
\end{centering}

\begin{center}
\begin{tabularx}{.8\textwidth}{
    |>{\centering\arraybackslash}X
    |>{\centering\arraybackslash}X
    |>{\centering\arraybackslash}X
    |>{\centering\arraybackslash}X |}
  \hline $F|_{X_1}$ & $F|_{X_2}$ & $F|_{X_3}$ & $F|_{X_4}$ \\ \noalign{\hrule height 1.5pt}
 $s^{-1}t^{2} + 1$& $t^{2} + s^{-1}$& $t^{2} + 1$& $t^{2} + 1$\\ \hline
 $t^{2} + 1$& $t^{2} + 1$& $t^{2} + s$& $st^{2} + 1$\\ \hline
 $t + s^{-1}t^{2}$& $t^{3} + s^{-1}$& $t^{3} + 1$& $t^{2} + t$\\ \hline
 $t^{2} + t$& $t^{3} + 1$& $t^{3} + s$& $st^{2} + t$\\ \hline
 $s^{-1}t^{2} + t^{-1}$& $t + s^{-1}$& $t + 1$& $t^{2} + t^{-1}$\\ \hline
 $t^{2} + t^{-1}$& $t + 1$& $s + t$& $st^{2} + t^{-1}$\\ \hline
\end{tabularx}
\end{center}

\begin{center}
  \begin{tabularx}{\textwidth}{
      |>{\centering\arraybackslash}X
      |>{\centering\arraybackslash}X
      |>{\centering\arraybackslash}X
      |>{\centering\arraybackslash}X|}
    \hline
    $D$ & $\int_M R_kD$ & $\int_M T_kD$ & $\int_M S_kD$ \\ \hline \hline
    1 &  $0$ & $0$ & $0$ \\ \hline
$\ch_2(Z)$ &  $-1/32$ & $-1/8$ & $5/32$ \\ \hline
$\ch_2(F)$ &  $1/8$ & $1/2$ & $-5/8$ \\ \hline
$\ch_3(1)$ &  $0$ & $0$ & $0$ \\ \hline
$\ch_2(Z)\ch_2(Z)$ &  $0$ & $0$ & $0$ \\ \hline
$\ch_2(F)\ch_2(Z)$ &  $0$ & $0$ & $0$ \\ \hline
$\ch_3(1)\ch_2(Z)$ &  $-1/16$ & $0$ & $1/16$ \\ \hline
$\ch_2(Z)\ch_2(F)$ &  $0$ & $0$ & $0$ \\ \hline
$\ch_2(F)\ch_2(F)$ &  $0$ & $0$ & $0$ \\ \hline
$\ch_3(1)\ch_2(F)$ &  $1/4$ & $0$ & $-1/4$ \\ \hline
$\ch_2(Z)\ch_3(1)$ &  $-1/16$ & $0$ & $1/16$ \\ \hline
$\ch_2(F)\ch_3(1)$ &  $1/4$ & $0$ & $-1/4$ \\ \hline
$\ch_3(1)\ch_3(1)$ &  $0$ & $0$ & $0$ \\ \hline
$\ch_2(\mathbf{p})$ &  $0$ & $0$ & $0$ \\ \hline
$\ch_3(Z)$ &  $-1/32$ & $0$ & $1/32$ \\ \hline
$\ch_3(F)$ &  $1/8$ & $0$ & $-1/8$ \\ \hline
$\ch_4(1)$ &  $0$ & $0$ & $0$ \\ \hline
  \end{tabularx}
\end{center}


\vspace{1em}
\begin{centering}
  \begin{tabularx}{\textwidth}{
      |>{\centering\arraybackslash}X
      |>{\centering\arraybackslash}X
      |>{\centering\arraybackslash}X
      |>{\centering\arraybackslash}X
      |>{\centering\arraybackslash}X
      |>{\centering\arraybackslash}X |}
    \hline
    $X = \mathbb{F}_0$ & $r = 2$ & $\Delta = F$ & $c_2 = 2$ & $\dim M = 5$ & $H = 3F + 5Z$
    \\\hline
\end{tabularx}
\end{centering}

\begin{center}
\begin{tabularx}{.8\textwidth}{
    |>{\centering\arraybackslash}X
    |>{\centering\arraybackslash}X
    |>{\centering\arraybackslash}X
    |>{\centering\arraybackslash}X |}
  \hline $F|_{X_1}$ & $F|_{X_2}$ & $F|_{X_3}$ & $F|_{X_4}$ \\ \noalign{\hrule height 1.5pt}
  $s^{-1}t^{2} + 1$& $t^{2} + s^{-1}$& $t^{2} + 1$& $t^{2} + 1$\\ \hline
 $t^{2} + 1$& $t^{2} + 1$& $t^{2} + s$& $st^{2} + 1$\\ \hline
 $st + s + t + s^{-1}t$& $t + s^{-1}$& $t + 1$& $t + 1$\\ \hline
 $t^{2} + t + s^{-1}t^{2} + 1$& $t + s^{-1}$& $t + 1$& $t + 1$\\ \hline
 $1 + s^{-1}t$& $st - s + 1 + s^{-1}$& $t + 1$& $t + 1$\\ \hline
 $1 + s^{-1}t$& $t + 1 - t^{-1} + s^{-1}t^{-1}$& $t + 1$& $t + 1$\\ \hline
 $1 + s^{-1}t$& $t + s^{-1}$& $t + t^{-1} + s^{-1} - s^{-1}t^{-1}$& $t + 1$\\ \hline
 $1 + s^{-1}t$& $t + s^{-1}$& $1 + 1 + s^{-1}t - s^{-1}$& $t + 1$\\ \hline
 $1 + s^{-1}t$& $t + s^{-1}$& $t + 1$& $t + t - s^{-1}t + s^{-1}$\\ \hline
 $1 + s^{-1}t$& $t + s^{-1}$& $t + 1$& $t^{2} - s^{-1}t^{2} + 1 + s^{-1}t$\\ \hline
 $st + s + t + t$& $t + 1$& $s + t$& $st + 1$\\ \hline
 $st^{2} + st + t^{2} + 1$& $t + 1$& $s + t$& $st + 1$\\ \hline
 $t + 1$& $s + t - st^{-1} + t^{-1}$& $s + t$& $st + 1$\\ \hline
 $t + 1$& $st - s + 1 + 1$& $s + t$& $st + 1$\\ \hline
 $t + 1$& $t + 1$& $t + st^{-1} + 1 - t^{-1}$& $st + 1$\\ \hline
 $t + 1$& $t + 1$& $s + 1 + s^{-1}t - s^{-1}$& $st + 1$\\ \hline
 $t + 1$& $t + 1$& $s + t$& $st + t - s^{-1}t + s^{-1}$\\ \hline
 $t + 1$& $t + 1$& $s + t$& $st^{2} - t^{2} + t + 1$\\ \hline
 $s + t$& $s + t$& $s^{2} + t$& $s^{2}t + 1$\\ \hline
 $s + s^{-1}t$& $st + s^{-1}$& $st + 1$& $st + 1$\\ \hline
 $s + s^{-1}t$& $st + s^{-1}$& $s + t$& $s + t$\\ \hline
 $st + 1$& $st + 1$& $s^{2} + t$& $s^{2}t + 1$\\ \hline
\end{tabularx}
\end{center}

\begin{center}
  \begin{tabularx}{\textwidth}{
      |>{\centering\arraybackslash}X
      |>{\centering\arraybackslash}X
      |>{\centering\arraybackslash}X
      |>{\centering\arraybackslash}X|}
    \hline
    $D$ & $\int_M R_kD$ & $\int_M T_kD$ & $\int_M S_kD$ \\ \hline \hline
    1 &  $0$ & $0$ & $0$ \\ \hline
$\ch_2(Z)$ &  $-49/32$ & $-49/8$ & $245/32$ \\ \hline
$\ch_2(F)$ &  $25/8$ & $25/2$ & $-125/8$ \\ \hline
$\ch_3(1)$ &  $-1$ & $1$ & $0$ \\ \hline
$\ch_2(Z)\ch_2(Z)$ &  $-4$ & $4$ & $0$ \\ \hline
$\ch_2(F)\ch_2(Z)$ &  $2$ & $-2$ & $0$ \\ \hline
$\ch_3(1)\ch_2(Z)$ &  $-1/16$ & $-6$ & $97/16$ \\ \hline
$\ch_2(Z)\ch_2(F)$ &  $2$ & $-2$ & $0$ \\ \hline
$\ch_2(F)\ch_2(F)$ &  $8$ & $-8$ & $0$ \\ \hline
$\ch_3(1)\ch_2(F)$ &  $1/4$ & $12$ & $-49/4$ \\ \hline
$\ch_2(Z)\ch_3(1)$ &  $-1/16$ & $-6$ & $97/16$ \\ \hline
$\ch_2(F)\ch_3(1)$ &  $1/4$ & $12$ & $-49/4$ \\ \hline
$\ch_3(1)\ch_3(1)$ &  $-2$ & $2$ & $0$ \\ \hline
$\ch_2(\mathbf{p})$ &  $0$ & $0$ & $0$ \\ \hline
$\ch_3(Z)$ &  $-49/32$ & $0$ & $49/32$ \\ \hline
$\ch_3(F)$ &  $25/8$ & $0$ & $-25/8$ \\ \hline
$\ch_4(1)$ &  $-1$ & $1$ & $0$ \\ \hline

  \end{tabularx}
\end{center}


\vspace{1em}
\begin{centering}
  \begin{tabularx}{\textwidth}{
      |>{\centering\arraybackslash}X
      |>{\centering\arraybackslash}X
      |>{\centering\arraybackslash}X
      |>{\centering\arraybackslash}X
      |>{\centering\arraybackslash}X |}
    \hline
    $X = \mathbb{F}_1$ & $r = 2$ & $\Delta = F$ & $c_2 = 1$ & $\dim M = 1$
    \\\hline
\end{tabularx}
\end{centering}

\begin{center}
\begin{tabularx}{.8\textwidth}{
    |>{\centering\arraybackslash}X
    |>{\centering\arraybackslash}X
    |>{\centering\arraybackslash}X
    |>{\centering\arraybackslash}X |}
  \hline $F|_{X_1}$ & $F|_{X_2}$ & $F|_{X_3}$ & $F|_{X_4}$ \\ \noalign{\hrule height 1.5pt}
  $st + 1$& $st + 1$& $st + s$& $s^{2}t + 1$ \\ \hline
  $s + t$& $st + 1$& $st + s$& $st + s$ \\ \hline
\end{tabularx}
\end{center}

\begin{center}
  \begin{tabularx}{\textwidth}{
      |>{\centering\arraybackslash}X
      |>{\centering\arraybackslash}X
      |>{\centering\arraybackslash}X
      |>{\centering\arraybackslash}X|}
    \hline
    $D$ & $\int_M R_kD$ & $\int_M T_kD$ & $\int_M S_kD$ \\ \hline \hline
    1 &  $0$ & $0$ & $0$ \\ \hline
$\ch_2(Z)$ &  $-1$ & $0$ & $1$ \\ \hline
$\ch_2(F)$ &  $1$ & $0$ & $-1$ \\ \hline
$\ch_3(1)$ &  $0$ & $0$ & $0$ \\ \hline
  \end{tabularx}
\end{center}


\vspace{1em}
\begin{centering}
  \begin{tabularx}{\textwidth}{
      |>{\centering\arraybackslash}X
      |>{\centering\arraybackslash}X
      |>{\centering\arraybackslash}X
      |>{\centering\arraybackslash}X
      |>{\centering\arraybackslash}X |}
    \hline
    $X = \mathbb{F}_1$ & $r = 2$ & $\Delta = Z$ & $c_2 = 1$ & $\dim M = 2$
    \\\hline
 
\end{tabularx}
\end{centering}

\begin{center}
\begin{tabularx}{.8\textwidth}{
    |>{\centering\arraybackslash}X
    |>{\centering\arraybackslash}X
    |>{\centering\arraybackslash}X
    |>{\centering\arraybackslash}X |}
  \hline $F|_{X_1}$ & $F|_{X_2}$ & $F|_{X_3}$ & $F|_{X_4}$ \\ \noalign{\hrule height 1.5pt}
  $s^{2} + s$& $s^{2}t + s$& $s^{2} + st$& $s^{2} + 1$\\ \hline
 $st^{-1} + 1$& $s + 1$& $s + 1$& $s + s^{-1}t^{-1}$\\ \hline
 $s^{2} + 1$& $s^{2}t + 1$& $st + s$& $s + 1$\\ \hline
\end{tabularx}
\end{center}

\begin{center}
  \begin{tabularx}{\textwidth}{
      |>{\centering\arraybackslash}X
      |>{\centering\arraybackslash}X
      |>{\centering\arraybackslash}X
      |>{\centering\arraybackslash}X|}
    \hline
    $D$ & $\int_M R_kD$ & $\int_M T_kD$ & $\int_M S_kD$ \\ \hline \hline
        1 &  $0$ & $3/4$ & $-3/4$ \\ \hline
$\ch_2(Z)$ &  $0$ & $0$ & $0$ \\ \hline
$\ch_2(F)$ &  $0$ & $0$ & $0$ \\ \hline
$\ch_3(1)$ &  $5/16$ & $0$ & $-5/16$ \\ \hline
$\ch_2(Z)\ch_2(Z)$ &  $9/2$ & $-9/4$ & $-9/4$ \\ \hline
$\ch_2(F)\ch_2(Z)$ &  $-3/2$ & $3/4$ & $3/4$ \\ \hline
$\ch_3(1)\ch_2(Z)$ &  $0$ & $0$ & $0$ \\ \hline
$\ch_2(Z)\ch_2(F)$ &  $-3/2$ & $3/4$ & $3/4$ \\ \hline
$\ch_2(F)\ch_2(F)$ &  $1/2$ & $-1/4$ & $-1/4$ \\ \hline
$\ch_3(1)\ch_2(F)$ &  $0$ & $0$ & $0$ \\ \hline
$\ch_2(Z)\ch_3(1)$ &  $0$ & $0$ & $0$ \\ \hline
$\ch_2(F)\ch_3(1)$ &  $0$ & $0$ & $0$ \\ \hline
$\ch_3(1)\ch_3(1)$ &  $0$ & $0$ & $0$ \\ \hline
$\ch_2(\mathbf{p})$ &  $-1/2$ & $1/4$ & $1/4$ \\ \hline
$\ch_3(Z)$ &  $0$ & $0$ & $0$ \\ \hline
$\ch_3(F)$ &  $0$ & $0$ & $0$ \\ \hline
$\ch_4(1)$ &  $5/16$ & $-5/32$ & $-5/32$ \\ \hline
  \end{tabularx}
\end{center}


\vspace{1em}
\begin{centering}
  \begin{tabularx}{\textwidth}{
      |>{\centering\arraybackslash}X
      |>{\centering\arraybackslash}X
      |>{\centering\arraybackslash}X
      |>{\centering\arraybackslash}X
      |>{\centering\arraybackslash}X |}
    \hline
    $X = \mathbb{F}_1$ & $r = 2$ & $\Delta = F + Z$ & $c_2 = 1$ & $\dim M = 0$
    \\\hline
\end{tabularx}
\end{centering}

In this case Conjecture \ref{sec:strong-conjecture} is again automatic for dimension reasons.

\begin{center}
\begin{tabularx}{.8\textwidth}{
    |>{\centering\arraybackslash}X
    |>{\centering\arraybackslash}X
    |>{\centering\arraybackslash}X
    |>{\centering\arraybackslash}X |}
  \hline $F|_{X_1}$ & $F|_{X_2}$ & $F|_{X_3}$ & $F|_{X_4}$ \\ \noalign{\hrule height 1.5pt}
 $s + 1$& $st + 1$& $st + s$& $s + 1$ \\ \hline
\end{tabularx}
\end{center}


\vspace{1em}
\begin{centering}
  \begin{tabularx}{\textwidth}{
      |>{\centering\arraybackslash}X
      |>{\centering\arraybackslash}X
      |>{\centering\arraybackslash}X
      |>{\centering\arraybackslash}X
      |>{\centering\arraybackslash}X |}
    \hline
    $X = \mathbb{F}_2$ & $r = 2$ & $\Delta = F$ & $c_2 = 1$ & $\dim M = 1$
    \\\hline
\end{tabularx}
\end{centering}

\begin{center}
\begin{tabularx}{.8\textwidth}{
    |>{\centering\arraybackslash}X
    |>{\centering\arraybackslash}X
    |>{\centering\arraybackslash}X
    |>{\centering\arraybackslash}X |}
  \hline $F|_{X_1}$ & $F|_{X_2}$ & $F|_{X_3}$ & $F|_{X_4}$ \\ \noalign{\hrule height 1.5pt}
  $s^{2} + st$& $s^{2}t + s$& $s^{2}t + s^{2}$& $s^{2}t + s^{2}$\\ \hline
 $s^{2}t + 1$& $s^{2}t + 1$& $s^{2}t + s$& $s^{3}t + 1$\\ \hline
\end{tabularx}
\end{center}

\begin{center}
  \begin{tabularx}{\textwidth}{
      |>{\centering\arraybackslash}X
      |>{\centering\arraybackslash}X
      |>{\centering\arraybackslash}X
      |>{\centering\arraybackslash}X|}
    \hline
    $D$ & $\int_M R_kD$ & $\int_M T_kD$ & $\int_M S_kD$ \\ \hline \hline
    1 &  $0$ & $0$ & $0$ \\ \hline
$\ch_2(Z)$ &  $-3/2$ & $0$ & $3/2$ \\ \hline
$\ch_2(F)$ &  $1$ & $0$ & $-1$ \\ \hline
$\ch_3(1)$ &  $0$ & $0$ & $0$ \\ \hline
  \end{tabularx}
\end{center}


\vspace{1em}
\begin{centering}
  \begin{tabularx}{\textwidth}{
      |>{\centering\arraybackslash}X
      |>{\centering\arraybackslash}X
      |>{\centering\arraybackslash}X
      |>{\centering\arraybackslash}X
      |>{\centering\arraybackslash}X |}
    \hline
    $X = \mathbb{F}_2$ & $r = 2$ & $\Delta = Z$ & $c_2 = 1$ & $\dim M = 3$
    \\\hline
\end{tabularx}
\end{centering}

\begin{center}
\begin{tabularx}{.8\textwidth}{
    |>{\centering\arraybackslash}X
    |>{\centering\arraybackslash}X
    |>{\centering\arraybackslash}X
    |>{\centering\arraybackslash}X |}
  \hline $F|_{X_1}$ & $F|_{X_2}$ & $F|_{X_3}$ & $F|_{X_4}$ \\ \noalign{\hrule height 1.5pt}
  $st^{-1} + 1$& $s + 1$& $s + 1$& $s + s^{-2}t^{-1}$\\ \hline
 $s^{3} + s^{2}$& $s^{3}t + s^{2}$& $s^{3} + s^{2}t$& $s^{3} + 1$\\ \hline
 $s^{3} + s$& $s^{3}t + s$& $s^{2}t + s^{2}$& $s^{2} + 1$\\ \hline
 $s^{3} + 1$& $s^{3}t + 1$& $s^{2}t + s$& $s + 1$\\ \hline
\end{tabularx}
\end{center}

\begin{center}
  \begin{tabularx}{\textwidth}{
      |>{\centering\arraybackslash}X
      |>{\centering\arraybackslash}X
      |>{\centering\arraybackslash}X
      |>{\centering\arraybackslash}X|}
    \hline
    $D$ & $\int_M R_kD$ & $\int_M T_kD$ & $\int_M S_kD$ \\ \hline \hline
    1 &  $0$ & $0$ & $0$ \\ \hline
$\ch_2(Z)$ &  $1/2$ & $1$ & $-3/2$ \\ \hline
$\ch_2(F)$ &  $-1/8$ & $-1/4$ & $3/8$ \\ \hline
$\ch_3(1)$ &  $0$ & $0$ & $0$ \\ \hline
$\ch_2(Z)\ch_2(Z)$ &  $0$ & $0$ & $0$ \\ \hline
$\ch_2(F)\ch_2(Z)$ &  $0$ & $0$ & $0$ \\ \hline
$\ch_3(1)\ch_2(Z)$ &  $3/4$ & $0$ & $-3/4$ \\ \hline
$\ch_2(Z)\ch_2(F)$ &  $0$ & $0$ & $0$ \\ \hline
$\ch_2(F)\ch_2(F)$ &  $0$ & $0$ & $0$ \\ \hline
$\ch_3(1)\ch_2(F)$ &  $-3/16$ & $0$ & $3/16$ \\ \hline
$\ch_2(Z)\ch_3(1)$ &  $3/4$ & $0$ & $-3/4$ \\ \hline
$\ch_2(F)\ch_3(1)$ &  $-3/16$ & $0$ & $3/16$ \\ \hline
$\ch_3(1)\ch_3(1)$ &  $0$ & $0$ & $0$ \\ \hline
$\ch_2(\mathbf{p})$ &  $0$ & $0$ & $0$ \\ \hline
$\ch_3(Z)$ &  $1/2$ & $0$ & $-1/2$ \\ \hline
$\ch_3(F)$ &  $-1/8$ & $0$ & $1/8$ \\ \hline
$\ch_4(1)$ &  $0$ & $0$ & $0$ \\ \hline

  \end{tabularx}
\end{center}

\begin{center}
  \begin{tabularx}{\textwidth}{
      |>{\centering\arraybackslash}X
      |>{\centering\arraybackslash}X
      |>{\centering\arraybackslash}X
      |>{\centering\arraybackslash}X|}
    \hline
    $D$ & $\int_M R_kD$ & $\int_M T_kD$ & $\int_M S_kD$ \\ \hline \hline
  \end{tabularx}
\end{center}


\vspace{1em}
\begin{centering}
  \begin{tabularx}{\textwidth}{
      |>{\centering\arraybackslash}X
      |>{\centering\arraybackslash}X
      |>{\centering\arraybackslash}X
      |>{\centering\arraybackslash}X
      |>{\centering\arraybackslash}X |}
    \hline
    $X = \mathbb{F}_2$ & $r = 2$ & $\Delta = F + Z$ & $c_2 = 1$ & $\dim M = 1$
    \\\hline
\end{tabularx}
\end{centering}

\begin{center}
\begin{tabularx}{.8\textwidth}{
    |>{\centering\arraybackslash}X
    |>{\centering\arraybackslash}X
    |>{\centering\arraybackslash}X
    |>{\centering\arraybackslash}X |}
  \hline $F|_{X_1}$ & $F|_{X_2}$ & $F|_{X_3}$ & $F|_{X_4}$ \\ \noalign{\hrule height 1.5pt}
  $s^{2} + s$& $s^{2}t + s$& $s^{2}t + s^{2}$& $s^{2} + 1$\\ \hline
 $s^{2} + 1$& $s^{2}t + 1$& $s^{2}t + s$& $s + 1$\\ \hline
\end{tabularx}
\end{center}

\begin{center}
  \begin{tabularx}{\textwidth}{
      |>{\centering\arraybackslash}X
      |>{\centering\arraybackslash}X
      |>{\centering\arraybackslash}X
      |>{\centering\arraybackslash}X|}
    \hline
    $D$ & $\int_M R_kD$ & $\int_M T_kD$ & $\int_M S_kD$ \\ \hline \hline
    1 &  $0$ & $0$ & $0$ \\ \hline
$\ch_2(Z)$ &  $3/2$ & $0$ & $-3/2$ \\ \hline
$\ch_2(F)$ &  $-1/2$ & $0$ & $1/2$ \\ \hline
$\ch_3(1)$ &  $0$ & $0$ & $0$ \\ \hline
  \end{tabularx}
\end{center}


\printbibliography

\end{document}
